\newcommand{\im}{\lrcorner\,}
\appto\appendix{\addtocontents{toc}{\protect\setcounter{tocdepth}{1}}}
\appto\listoffigures{\addtocontents{lof}{\protect\setcounter{tocdepth}{1}}}
\appto\listoftables{\addtocontents{lot}{\protect\setcounter{tocdepth}{1}}}
\theoremstyle{plain}
\newtheorem{theorem}{Theorem}[section]
\newtheorem{corollary}[theorem]{Corollary}
\newtheorem*{theorem*}{Theorem}
\newtheorem*{corollary*}{Corollary}
\newtheorem{proposition}[theorem]{Proposition}
\newtheorem{definition}[theorem]{Definition}
\theoremstyle{remark}
\newtheorem{remark}[theorem]{Remark}
\newtheorem{example}[theorem]{Example}
\newtheorem*{example*}{Example}
\numberwithin{equation}{section}
\definecolor{carmine}{rgb}{0.59, 0.0, 0.09}
\definecolor{mediumpersianblue}{rgb}{0.0, 0.4, 0.65}
\definecolor{persianplum}{rgb}{0.44, 0.11, 0.11}
\newcommand{\cC}{\mathcal{C}}
\newcommand{\cE}{\mathcal{E}}
\newcommand{\cG}{\mathcal{G}}
\newcommand{\cL}{\mathcal{L}}
\newcommand{\cP}{\mathcal{P}}
\newcommand{\cJ}{\mathcal{J}}
\newcommand{\cI}{\mathcal{I}}
\newcommand{\cN}{\mathcal{N}}
\newcommand{\scX}{\mathcal{X}}
\newcommand{\scV}{\mathcal{V}}
\newcommand{\scD}{\mathcal{D}}
\newcommand{\scE}{\mathcal{E}}
\newcommand{\scK}{\mathcal{K}}
\newcommand{\scB}{\mathcal{B}}
\newcommand{\scC}{\mathcal{C}}
\newcommand{\fg}{\mathfrak{g}}
\newcommand{\fp}{\mathfrak{p}}
\newcommand{\fs}{\mathfrak{s}}
\newcommand{\R}{\mathbb{R}}
\newcommand{\RR}{\mathbb{R}}
\newcommand{\FF}{\mathbb{F}}
\newcommand{\C}{\mathbb{C}}
\newcommand{\CC}{\mathbb{C}}
\newcommand{\PP}{\mathbb{P}}
\newcommand{\bC}{\mathbf{C}}
\newcommand{\bH}{\mathbf{H}}
\newcommand{\bT}{\mathbf{T}}
\newcommand{\w}{{\,{\wedge}\;}}
\newcommand{\exd}{\mathrm{d}}
\newcommand{\half}{\textstyle{\frac 12}}
\newcommand{\Dt}{X_F}  
\renewcommand*{\p@section}{\S\,}
\renewcommand*{\p@subsection}{\S\,}
\renewcommand*{\p@subsubsection}{\S\,}
\begin{document}

\author{Wojciech Kry\'nski}\author{Omid Makhmali}

 \address{\newline Wojciech Kry\'nski\\\newline
   Institute of Mathematics, Polish Academy of Sciences, \'Sniadeckich 8, 00-656 Warszawa, Poland\\\newline
   \textit{Email address: }{\href{mailto:krynski@impan.pl}{\texttt{krynski@impan.pl}}}\\\newline\newline
 Omid Makhmali\\\newline
Departamento de Geometr\'{\i}a y Topolog\'{\i}a and IMAG, Universidad de Granada, Granada 18071, Spain \\\newline
   \textit{Email address: }{\href{mailto:omakhmali@ugr.es}{\texttt{omakhmali@ugr.es}}}\\\newline
Department of Mathematics and Statistics, UiT The Arctic University of Norway, Troms\o\  90-37,Norway \\\newline
    \textit{Email address: }{\href{mailto:omid.makhmali@uit.no}{\texttt{omid.makhmali@uit.no}}}
 }

\title[]
{Lewy curves in para-CR geometry} 
\date{\today}

\begin{abstract}
We define a class of curves, referred to as \emph{Lewy curves}, in para-CR geometry,   following H. Lewy's original definition in CR geometry.  
We give a  characterization of path geometries defined by para-CR Lewy curves.    In dimension 3 our characterization is given by a set of  necessary and sufficient conditions which, with the exception of one condition, are easily computationally verifiable. Furthermore, we show that Lewy curves of a para-CR 3-manifold coincide with chains of some (para-)CR 3-manifold if and only if it is flat.   Subsequently, it follows that Lewy curves determine the para-CR structure up to the sign of the almost para-complex structure. In higher dimensions we show that para-CR Lewy curves define a path geometry if and only if the para-CR structure is flat, in which case chains and Lewy curves coincide. 
\end{abstract}

\subjclass{Primary: 53A40, 53B15,  53C15; Secondary:  53A55, 34A26, 34A55, 32V99}
\keywords{para-CR geometry, Lewy curves, path geometry, Cartan connection, Cartan reduction}

\maketitle
  
\vspace{-.5 cm}

\setcounter{tocdepth}{2} 
\tableofcontents

\section{Introduction}
\label{sec:introduction}

Many geometric structures on manifolds come equipped with a distinguished class of curves, e.g. geodesics in (pseudo-)Riemannian and projective structures, null geodesics in conformal pseudo-Riemannian structures, and conformal geodesics in conformal (pseudo-)Riemannian structures. In the case of (para-)CR structures the most well-known class of distinguished curves, originally defined in \cite{CM-CR}, are called chains. In this article we focus on another class of curves in (para-)CR structures, referred to as \emph{Lewy curves}.

Lewy curves were first studied  in \cite{Faran-Lewy} for analytic CR structures in relation to CR chains, in which the author attributes their definition  to H. Lewy and, therefore, names them after him. It turns out that through each point of an analytic CR 3-manifold and along  an open set of    transverse directions to the contact distribution at that point, there passes a unique Lewy curve. As a result, as in the case of chains, Lewy curves define a \emph{(generalized) path geometry} on a CR 3-manifold. 

The aim of this paper is four-fold. Firstly, we define   Lewy curves for (real or complex) para-CR structures and show that in dimension three they determine para-CR structures up to the signature of the almost para-complex structure. Secondly, we characterize path geometries that are defined by Lewy curves of a para-CR 3-manifold.   Thirdly, we highlight the difference between chains and Lewy curves using our characterization here and the one in \cite{KM-chains} which  shows that they only coincide when the para-CR 3-manifold is flat.  Lastly, in higher dimensions, it turns out that in   a non-flat para-CR structure Lewy curves define a \emph{higher  path geometry} which corresponds to an ODE system of order greater than two.  We show that in higher dimensional para-CR structures Lewy curves define a (generalized) path geometry if and only if the para-CR structure is flat, in which case Lewy curves coincide with chains. As will be discussed,  this is a stronger version of the para-CR analogue of  Faran's result in the CR setting \cite{Faran-Lewy} and are obtained using path geometric and Cartan geometric techniques. 
 
\subsection{Lewy curves in (para-)CR geometry: a first look}\label{sec:first-look}   
Let us start with the case of 3-dimensional CR structures. Following \cite{Faran-Lewy},  let 
\[
N=\{z\in\C^2\ |\ r(z,\bar z)=0\}\subset\C^2,
\]
be a real analytic 3-dimensional CR-manifold for an analytic function $r\colon\C^2\to\R$ where $\exd r\neq 0$. Function $r$ extends to a complex-valued function  on $\C^2\times\C^2$ (or an open subset of $\C^2\times\C^2$), allowing one to define the hypersurface 
\begin{equation}\label{eq:Nc-complexparaCR}
  \cN^\C=\{(z,\zeta)\in\C^2\times\C^2\ |\ r(z,\zeta)=0\}.
  \end{equation}
The hypersurface $\cN^\C$ is referred to as the \emph{Segre family} of the CR-manifold $N,$  first introduced by Segre in \cite{Segre1,Segre2}, further studied by Cartan \cite{CartanSegre}, and later by many others including \cite{Chern,Webster,Faran-Segre}. The Segre family   defines a \emph{complex para-CR structure} on $\cN^{\C}$ (see Definition \ref{def:para-cr-structures}).  Fixing $\zeta\in\C^2,$ define the complex curve $Q_\zeta\subset\C^2$  as
\begin{equation}\label{eq:Q-zeta}
  Q^\C_\zeta=\{z\in\C^2\ |\ r(z,\zeta)=0\}.
\end{equation} 
 The  family of real  curves  $\gamma_\zeta\subset N$ defined as
\[
\gamma_\zeta=Q^\C_\zeta\cap N
\] 
depend on 4 real parameters and are referred to as Lewy curves. It turns out that the family of Lewy  curves define a  generalized path geometry  canonically associated to the CR structure on $N.$

Our main focus is to define Lewy curves in (complex or real) para-CR geometry and characterize path geometries defined by them. A para-CR structure on a 3-dimensional manifold $N$ is defined as a  contact distribution $\scC\subset TN$ equipped with a splitting $\scC=\scD_1\oplus\scD_2$ for line bundles $\scD_1,\scD_2\subset TN.$

Considering real para-CR 3-manifolds, it turns out (see \ref{sec:paraCR}) that $N$ can be locally realized as a hypersurface in a 4-dimensional product of two surfaces $M_1$ and $M_2$, i.e. 
\[
N=\{(x,y)\in M_1\times M_2\ |\ \Phi(x,y)=0\}\subset M_1\times M_2,
\]
for a differentiable function $\Phi\colon M_1\times M_2\to \R$, where $M_1$ and $M_2$ are the local leaf spaces of the integral curves of $\scD_1$ and $\scD_2$, respectively. Note that locally $M_1\times M_2\cong \RR^2\times (\RR^2)^*$  with its standard para-CR structure, which replaces the complex structure on $\CC^2$ in the CR setting.
 The para-CR analogue  of the Segre family is the real 6-manifold
\[\cN:=\{(x,y,\hat x,\hat y)\in(M_1\times M_2)\times(M_1\times M_2)\ |\ \Phi(x,\hat y)=\Phi(\hat x,y)=0\}.\]
Subsequently, the 4-parameter family of  surfaces $\{Q_{\hat x\hat y}\}$ where
\[
Q_{\hat x\hat y}:=\{(x,y)\in M_1\times M_2 \ |\ \Phi(x,\hat y)=\Phi(\hat x,y)=0\},
\] 
can be used to define Lewy curves as curves in $N$ obtained via the intersection of $Q_{\hat x\hat y}$ and $N$ for all $(\hat x,\hat y)\in M_1\times M_2\setminus N.$  In other words, Lewy curves on $N$ are  parameterized by points $(\hat x,\hat y)\in M_1\times M_2\setminus N$ and are defined as
\begin{equation}\label{eq:gammaxy-first-look}
\gamma_{\hat x\hat y}=\{(x,y)\in N\ |\  \Phi(x,\hat y)=\Phi(\hat x,y)=0 \},
\end{equation}
which endow  $N$ with a generalized path geometry. We give a coordinate-free  description of the definition above  in  \ref{sec:intrinsic}. We study certain properties of these curves and characterize them among 3-dimensional path geometries.

The (complex) Segre family in  CR geometry and the real Segre structure in the  para-CR setting can be treated identically and give rise to the set of Lewy curves. The only difference, as will be discussed  (see Remark \ref{rmk:para-cr-structures-2D-path}), is that in para-CR 3-manifolds assuming smoothness is sufficient for the definition of Lewy curves, unlike the case of CR structures and   higher dimensional para-CR manifolds where analyticity is needed.

Lastly,  note that  the  complex para-CR structure on  complex 3-manifold $\cN^{\C}$ in \eqref{eq:Nc-complexparaCR} is also equipped with a complex path geometry defined by complex Lewy curves. More precisely, replacing the real-valued function $\Phi\colon\RR^2\times\RR^2\to\RR$ in \eqref{eq:gammaxy-first-look} with the complex-valued function $r\colon\C^2\times \C^2\to\C$ used in  \eqref{eq:Nc-complexparaCR}, one obtains a 4 complex-parameter family of complex Lewy curves $\gamma^\C_{\hat x\hat y}\subset\cN^\C,(\hat x,\hat y)\in\CC^2\times \CC^2\setminus\cN^{\C}$  which define a (generalized) path geometry on $\cN^\C.$ Similarly, they can be defined via the intersection of $\cN^\C$ with 4-parameter family of complex surfaces $Q^\C_{\hat x\hat y}.$ As will be mentioned, our discussion on path geometry of Lewy curves in para-CR structures holds valid in both the real and complex setting.

\subsection{Outline of the article and main results}

In \ref{sec:path-geom-review} we review some well-known facts about path geometries and para-CR structures and their Cartan geometric description. In \ref{sec:path-geom-defin} we define a (generalized) path geometry on an $(n+1)$-dimensional manifold $M$  as a triple $(Q,\scV,\scX)$ where $\scX,\scV\in\Gamma(TQ)$  are distributions that intersect trivially and have  ranks 1 and $n-1,$ respectively, $\scV$ is integrable with the property that $[\scX,\scV]=TQ$ and $Q$ is a $(2n+1)$-dimensional manifold such that locally $Q\slash \scV=M$. It follows that $Q$ is locally  (an open subset of) the projectivized tangent bundle $\PP TM\to M.$ Locally, path geometries correspond to point equivalence classes of systems of 2nd order ODEs.

In \ref{sec:paraCR}  para-CR structures are defined as $(2n+1)$-dimensional contact manifolds $N$  whose contact distribution $\scC\subset TN$ is equipped  with a splitting   by a pair of integrable Lagrangian subspaces $\scC=\scD_1\oplus\scD_2.$ Locally, one can consider the leaf spaces of the integral manifolds of $\scD_1$ and $\scD_2,$ denoted by $M_2$ and $M_1,$ respectively. 
For $n\geq 2$, there is a local correspondence between (analytic) para-CR structures and point equivalence classes of certain overdetermined systems of 2nd order PDEs. By definition, 3-dimensional para-CR structures coincide with path geometries on surfaces, which locally correspond to point equivalence classes of scalar 2nd order ODEs.

In \ref{sec:solut-equiv-probl} we give a solution of the equivalence problem for path geometries and para-CR structures in the form of certain types of Cartan geometries. The fundamental invariants of path geometries on manifolds of dimension $n\geq 3,$ are given by a torsion $\bT$ and a curvature $\bC$. In dimension 3, $\bT$ and $\bC,$ as $\mathrm{GL}(2)$-modules, can be represented by a binary quadric and a binary quartic, respectively. 

In \ref{sec:dancing-construction},  we start our study of Lewy curves in para-CR geometry by defining them in terms of a para-CR defining function in \ref{sec:LewyDef}. In \ref{sec:intrinsic} we describe para-CR Lewy curves in terms of intersections of the Legendrian leaves of $\scD_1$ and $\scD_2.$ 
In \ref{sec:examples} we describe systems of 2nd order ODEs that correspond to  path geometries defined by Lewy curves.

Our first main result is in \ref{sec:characterization-dancing}, where we characterize path geometries on 3-dimensional manifolds defined by Lewy curves of a 3-dimensional (real or complex) para-CR structure. We prove the following.
\theoremstyle{plain}
\newtheorem*{thmA}{\bf Theorem \ref{thm2}}
\begin{thmA}
Let $(Q,\scV,\scX)$ be a 3-dimensional path geometry on $N=Q/\scV$. Then $(Q,\scV,\scX)$ is defined by Lewy curves of a para-CR structure if and only if the vertical bundle $\scV$ has a splitting $\scV=\scV_1\oplus \scV_2$ with the following properties:
\begin{itemize}
\item[(a)] The rank-3 distributions
$\scB_1= [\scX, \scV_1]$ and $ \scB_2= [\scX,\scV_2]$ spanned by Lie brackets of sections of $\scX$ and $\scV_i$, respectively, are integrable.
\item[(b)]  $\scB_1$ and $\scB_2$ have rank-2 sub-distributions $\tilde\scK_1$ and $\tilde\scK_2$ containing $\scV_1$ and $\scV_2$, respectively, such that rank 3 distributions 
 $\scK_1=\tilde \scK_1\oplus\scV_2$ and $\scK_2=\tilde \scK_2\oplus\scV_1$ are integrable.
\item[(c)] The projections of $\scK_1$ and $\scK_2$ to $N$ span a contact distribution.
\item[(d)] The three para-CR 3-manifolds in Proposition \ref{prop:freestyle} are equivalent.
\end{itemize}
\end{thmA}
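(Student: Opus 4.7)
The plan is to prove both directions of the equivalence by establishing a dictionary between the para-CR splitting $\scC = \scD_1 \oplus \scD_2$ on $N$ and the candidate vertical splitting $\scV = \scV_1 \oplus \scV_2$ on $Q$. The line bundles $\scV_i$ should correspond geometrically to infinitesimal deformations of a Lewy curve obtained by varying only one of the two parameters in the description $\gamma_{\hat x \hat y}$.

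For the necessity direction, I would start with a para-CR 3-manifold $(N,\scC = \scD_1 \oplus \scD_2)$ locally embedded in $M_1 \times M_2$ as in Section~\ref{sec:LewyDef}. Since Lewy curves are parametrised by $(\hat x,\hat y) \in M_1 \times M_2 \setminus N$, the vertical tangent space of $Q \to N$ at each point naturally splits according to the $T\hat x$ and $T\hat y$ factors; this defines $\scV_1$ and $\scV_2$. The integrability of $\scB_i = [\scX,\scV_i]$ follows because flowing along $\scX$ while varying just one parameter sweeps out a $3$-dimensional submanifold in $Q$ projecting to a Legendrian surface in $N$; the subdistributions $\tilde\scK_i$ are then recognised as Lie bracket closures preserving the leaves of $\scD_i$, and (c) is immediate from the contactness of $\scC$. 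Condition (d) is tautological in this direction because all three para-CR 3-manifolds produced by Proposition~\ref{prop:freestyle} are canonically identified with the original one.

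For the sufficiency direction, assume (a)--(d). The integrability of $\scK_1$ and $\scK_2$ provides local leaf spaces which yield maps $Q \to M_1$ and $Q \to M_2$; combined with the quotient $Q \to N$, these realise $N$ as a hypersurface in $M_1 \times M_2$. Condition (c) furnishes a contact distribution on $N$ together with a splitting into two Lagrangian line bundles, i.e.\ a candidate para-CR structure. The remaining task is to identify the paths of $\scX$ with the Lewy curves of this candidate, which is precisely what condition (d) accomplishes by forcing the three a priori distinct para-CR structures of Proposition~\ref{prop:freestyle} to agree.

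The main obstacle will be the sufficiency direction, specifically pinning down the role of condition (d). Conditions (a)--(c) already produce a well-defined candidate para-CR structure on $N$, but there is no a priori reason why the integral curves of $\scX$ should coincide with the Lewy curves of this structure rather than with some other distinguished family of curves obtained from the three auxiliary structures. Condition (d) is the consistency requirement enforcing this match, and demonstrating both its necessity and sufficiency will rely on the Cartan-geometric machinery developed in Section~\ref{sec:solut-equiv-probl}, applied by comparing the structure equations of the path geometry $(Q,\scV,\scX)$ with those of the Lewy curve path geometry derived from the explicit local description in Section~\ref{sec:LewyDef}.
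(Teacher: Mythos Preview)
Your overall dictionary is right, but the proof in the paper is both simpler and organised differently than you anticipate. The paper does \emph{not} argue directly on $Q$; instead it reduces Theorem~\ref{thm2} to Theorem~\ref{thm1}, which characterises Lewy path geometries by the existence of two $2$-parameter families of \emph{totally aligned} surfaces $\Sigma^1_x,\Sigma^2_y\subset N$ satisfying certain incidence and tangency properties. The bridge is: the leaves of the integrable rank-$3$ distributions $\scB_i$ project under $\varpi\colon Q\to N$ to the surfaces $\Sigma^i$, and total alignment is \emph{equivalent} to the integrability in condition~(a), since a path tangent to a leaf of $\scB_i$ stays in that leaf. Conversely, given the surfaces from Theorem~\ref{thm1}, the splitting $\scV=\scV_1\oplus\scV_2$ arises because each transverse direction in $N$ is uniquely an intersection $T_p\Sigma^1_x\cap T_p\Sigma^2_y$, so the two independent deformations of a path are ``slide along $\Sigma^1$'' and ``slide along $\Sigma^2$''. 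Conditions (b) and (c) then translate mechanically, and (d) is declared to be literally the same statement in both theorems.

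The genuine gap in your plan is the claim that handling condition~(d) requires the Cartan-geometric machinery of \S\ref{sec:solut-equiv-probl} and a comparison of structure equations. It does not. In the paper, once (a)--(c) hold, Proposition~\ref{prop:freestyle} produces three para-CR structures $(N,\scD_1,\scD_2)$, $(N,\tilde\scD_1,\scD_2)$, $(N,\scD_1,\tilde\scD_2)$ by elementary means: the first from the projections $\scD_i=\varpi_*\scK_i$, the other two from the auxiliary $2$-dimensional path geometries on $M_1=N/\scD_2$ and $M_2=N/\scD_1$ defined by the projected curves $\pi_2(\Sigma^1_x)$ and $\pi_1(\Sigma^2_y)$. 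Condition~(d) simply asserts these three coincide, and the proof of Theorem~\ref{thm1} shows by a short incidence argument (every $\Sigma^1_x$ is foliated by integral curves of $\scD_1$, hence its $\pi_2$-image is a curve, which by~(d) must be the $\pi_2$-image of a $\scD_2$-leaf) that this forces each $\Sigma^i$ to be of the form~\eqref{eq:def-Sigma1-Sigma2}, whence the paths are intersections $\Sigma^1_x\cap\Sigma^2_y$, i.e.\ Lewy curves. No connection, curvature, or structure equations enter. The Cartan reduction in \S\ref{sec:path-geom-aris} is used only afterwards, to reformulate (a)--(c) in terms of the invariants $\bT,\bC$ and to analyse the torsion-free/nonzero-torsion dichotomy.

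A minor correction: the leaf spaces you want are those of $\scB_i$ (yielding the parameter spaces $\tilde M_i$ for the surfaces), whereas the $\scK_i$ project to the \emph{line fields} $\scD_i$ on $N$, whose leaf spaces are $M_i=N/\scD_i$. Proposition~\ref{prop:freestyle} is precisely about the a~priori distinction between $M_i$ and $\tilde M_i$, which collapses under~(d).
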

When the 3-dimensional path geometry has non-zero torsion, the splitting $\scV=\scV_1\oplus\scV_2$ in the theorem above is encoded in the torsion $\bT$  by viewing it as a linear map $\bT\colon\scV\to\scV$ whose  eigenspaces are  $\scV_1$ and $\scV_2$. Consequently, conditions (a)-(c) can be explicitly verified for any given path geometry. On the other hand, condition (d) in Theorem \ref{thm2} is in general difficult  to verify.

In Theorem \ref{thm4} it is shown that  Lewy curves on para-CR manifolds of dimension $\geq 5$ in general define higher path geometries i.e. they correspond to ODE systems of order greater than two. More precisely, we prove the following.
\theoremstyle{plain}
\newtheorem*{thmB}{\bf Theorem \ref{thm4}} 
\begin{thmB}
Given a para-CR structure $(N,\scD_1,\scD_2)$ of dimension $2n+1\geq 5$, its Lewy curves define  a path geometry on $N$ if and only if  it is flat, in which case they coincide with chains.
\end{thmB}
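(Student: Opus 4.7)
The plan is to analyze the differential constraints that Lewy curves of $(N,\scD_1,\scD_2)$ satisfy and to identify the precise obstruction to their forming a second-order, rather than higher-order, ODE system with the harmonic part of the canonical Cartan curvature. Locally, I realize $N$ as the hypersurface $\Phi(x,y)=0$ in a product $M_1\times M_2$ of the Legendrian leaf spaces of $\scD_2$ and $\scD_1$, so that each Lewy curve arises from the pair of defining equations $\Phi(x,\hat y)=0$, $\Phi(\hat x,y)=0$ as in \eqref{eq:gammaxy-first-look} (after the intrinsic adaptation to higher dimension from \ref{sec:intrinsic}), with $(\hat x,\hat y)$ ranging over an open subset of $M_1\times M_2\setminus N$ of dimension $2n+2$.

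For the ``only if'' direction, I parameterize a Lewy curve through a fixed point $p\in N$, differentiate the two defining equations along it, and attempt to eliminate the auxiliary parameters $(\hat x,\hat y)$ from the successive jets of the curve. The prolongation closes to a second-order system, equivalently produces a generalized path geometry on $N$, precisely when certain higher-jet compatibility relations among the partial derivatives of $\Phi$ hold. The key step is to re-express these relations in terms of the Cartan-adapted coframe built in \ref{sec:solut-equiv-probl}, so that they acquire intrinsic meaning as components of the curvature of the canonical para-CR Cartan connection. In dimension $2n+1\geq 5$, by the Tanaka--Yamaguchi normalization and Kostant's theorem, the only invariant local obstructions to flatness are the harmonic curvature components; I expect to show that the compatibility conditions above correspond exactly to the vanishing of these harmonic components, so that ``Lewy curves define a path geometry'' is equivalent to flatness.

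For the ``if'' direction, one works directly in the flat para-CR model, where the defining function $\Phi$ can be normalized (to a non-degenerate quadratic pairing between $M_1$ and $M_2$), and the system $\Phi(x,\hat y)=\Phi(\hat x,y)=0$ on $N$ can be solved in closed form. The resulting curves are then projections of $1$-parameter orbits of a specific one-parameter subgroup of the structure group, which is the standard description of chains in the flat model (by direct transfer of the CR flat-model description to the para-CR setting). This immediately yields the coincidence of chains with Lewy curves in the flat case.

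The principal obstacle is the identification in the ``only if'' part of the elimination-level compatibility conditions on $\Phi$ with harmonic components of the Cartan curvature: this requires careful bookkeeping of how the successive partial derivatives of $\Phi$ transform under the Cartan reduction of \ref{sec:solut-equiv-probl}, and a cohomological argument (based on the restricted range of the harmonic curvature in dimension $\geq 5$) to rule out any non-harmonic obstruction that might otherwise appear. Once this matching is in place, the rigidity of the harmonic curvature in higher dimension forces the path-geometry condition to single out flatness, and the flat-model computation closes the equivalence with chains.
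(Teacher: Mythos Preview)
Your proposal has a genuine gap at its foundation: in dimension $2n+1\geq 5$, Lewy curves are \emph{not} parameterized by a single non-incident pair $(\hat x,\hat y)\in M_1\times M_2\setminus N$. By Definition~\ref{def:Lewy}, a Lewy curve is indexed by $n$ points $\hat x_1,\dots,\hat x_n\in M_1$ and $n$ points $\hat y_1,\dots,\hat y_n\in M_2$ in general position, giving a family of dimension $2n(n+1)$. Since a path geometry on a $(2n+1)$-manifold has a $4n$-dimensional space of paths, the issue is not whether an elimination procedure closes at second order, but whether every two choices of $(\hat x_i,\hat y_i)$ yielding the same point and direction produce the \emph{same} curve. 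Your scheme of ``differentiate the two equations and eliminate $(\hat x,\hat y)$'' does not engage with this coincidence problem, and the hoped-for identification of elimination obstructions with harmonic curvature components is left entirely unsubstantiated.

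The paper's argument proceeds by a different, geometric route. It exploits the non-uniqueness of the hyperplane presentation \eqref{eq_direction} of a transverse direction when $n\geq 2$: if Lewy curves form a path geometry, then two distinct presentations of the same direction must give the same curve, and more generally any subspace of the form $R=\bigcap_i T_p\Sigma^1_{x_i}\cap\bigcap_j T_p\Sigma^2_{y_j}$ is tangent to a unique intersection submanifold. Taking $l=0$, $k=n$ and projecting to $M_2$ yields curves $L_{x_1\dots x_n}$ that equip $M_2$ (and dually $M_1$) with a genuine path geometry. The paper then shows these induced path geometries are flat: the abundance of totally aligned surfaces forces every $2$-plane to be an eigenspace of $\bT_{M_1}$, so the trace-free torsion vanishes; the resulting Segre structure on the space of paths is then both $\alpha$- and $\beta$-integrable, which forces $\bC_{M_1}=0$. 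Flatness of the path geometries on $M_i$ makes the hypersurfaces $H_y$ projective hyperplanes, hence the para-CR structure is flat. The coincidence with chains is established by an explicit ODE comparison in the flat model (Proposition~\ref{prop:lewy-curves-chains}), not via one-parameter subgroup orbits.

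Thus even after you repair the parameter count, your proposed harmonic-curvature matching has no visible path to the crucial step---producing auxiliary path geometries on $M_1$ and $M_2$ and invoking integrability of the associated Segre structure. That geometric detour is the substance of the proof.
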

The Theorem above can be taken as a stronger version of  Faran's observation in \cite{Faran-Lewy} where it is  shown that if in a CR structure  Lewy curves coincide with chains then the CR structure is flat.

In \ref{sec:reduct-princ-bundle} we use the Cartan geometric description of para-CR structures and path geometries in dimension three to translate our characterization in Theorem \ref{thm2} in terms of the Cartan connection and the fundamental invariants of a path geometry.  Subsequently, one is lead to consider the two cases of zero or nonzero torsion. The  torsion-free case is considered in \ref{sec:path-geometry-lewy-torsionfree}.  This case allows us to understand the relation between chains and Lewy curves on para-CR 3-manifolds. 
\theoremstyle{plain} 
\newtheorem*{corC}{\bf Theorem \ref{cor:3d-path-geometries-dancing-chains}} 
\begin{corC}
A path geometry defined by Lewy curves of a para-CR structure arises via chains of some (para-)CR structure if and only if the para-CR structure is flat.
\end{corC}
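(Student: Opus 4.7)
The plan is to exploit two characterizations simultaneously: Theorem \ref{thm2} for path geometries defined by Lewy curves, together with the characterization of chain path geometries of (para-)CR 3-manifolds established in \cite{KM-chains} and advertised in the introduction as the main tool for comparing chains and Lewy curves. Both characterizations translate into conditions on the fundamental invariants $\bT$ and $\bC$ of a 3-dimensional path geometry, so any path geometry enjoying both structures at once must satisfy a highly overdetermined system.

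For the ``if'' direction I would compute directly in the flat model. Using a simple defining function such as $\Phi(x,\hat y)=\hat y_1 x_1+\hat y_2 x_2-1$, the Lewy curves are read off from \eqref{eq:gammaxy-first-look}, while the chains of the corresponding flat (para-)CR structure can be written down explicitly as orbits of a distinguished one-parameter subgroup of the flat symmetry group. Alternatively, by transitivity of this symmetry group on admissible initial conditions, together with the fact that chains and Lewy curves are each uniquely determined by an initial condition and the ambient (para-)CR data, the two families must agree on the flat model. This gives a 3-dimensional analogue of the coincidence portion of Theorem \ref{thm4}.

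For the ``only if'' direction I would suppose that $(Q,\scV,\scX)$ is simultaneously the Lewy-curve path geometry of a para-CR structure $(N,\scD_1,\scD_2)$ and the chain path geometry of some (para-)CR 3-manifold. When $\bT\neq 0$, Theorem \ref{thm2} pins down the splitting $\scV=\scV_1\oplus\scV_2$ via the eigenspaces of $\bT$, and the shape of $\bT$ forced by the chain characterization in \cite{KM-chains} must be compatible with this splitting; combined with conditions (a)--(d) of Theorem \ref{thm2}, compatibility should drive every curvature component of $(N,\scD_1,\scD_2)$ to zero. The case $\bT=0$ is precisely the regime of the Cartan reduction carried out in Section \ref{sec:path-geometry-lewy-torsionfree}, where the surviving invariants can be identified explicitly and the chain condition collapses them in a similar fashion. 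The main obstacle will be condition (d) of Theorem \ref{thm2} --- the global equivalence of the three para-CR 3-manifolds produced in Proposition \ref{prop:freestyle} --- since it is not a pointwise closed condition on the Cartan invariants but an integrability-type constraint; my expectation is that combining the chain condition with the Cartan machinery of Section \ref{sec:reduct-princ-bundle} reduces (d) to the vanishing of a finite set of curvature components, whereupon the two characterizations together force full flatness of $(N,\scD_1,\scD_2)$.
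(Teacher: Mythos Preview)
Your overall strategy---combine the Lewy characterization with the chain characterization from \cite{KM-chains}---is correct, but you miss the one fact that makes the argument a one-liner and instead propose a longer, partly speculative route.

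The paper's proof uses a single sharp input from \cite{KM-chains}: for the chain path geometry of \emph{any} (para-)CR 3-manifold, the binary quartic $\bC$ always has exactly two distinct real roots, each of multiplicity two. Once you know this, there is no case split. A Lewy path geometry satisfies conditions (a)--(c) of Theorem~\ref{thm2}, so Proposition~\ref{prop:torsionfree-freestyling} applies: there, condition (2) is precisely ``$\bC$ has two distinct real roots of multiplicity two,'' and it is equivalent to $\bT=0$ and to local equivalence with chains of the flat para-CR structure. Thus the chain hypothesis forces $\bT=0$ and flatness in one stroke. Your case analysis on $\bT\neq 0$ versus $\bT=0$ is unnecessary, and your focus on ``the shape of $\bT$ forced by the chain characterization'' is aimed at the wrong invariant: the decisive constraint from \cite{KM-chains} is on the root type of $\bC$, not on $\bT$.

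Two further points. First, your concern that condition (d) of Theorem~\ref{thm2} is the main obstacle is misplaced: the argument goes through using only (a)--(c), because that is all Proposition~\ref{prop:torsionfree-freestyling} assumes. Second, your ``if'' direction via explicit computation in the flat model is fine and matches what the paper does (Example~\ref{exa:corr-syst-odes-flat-paraCR} and Proposition~\ref{prop:lewy-curves-chains}); the homogeneity argument you sketch would also work. The gap is entirely in the ``only if'' direction, where phrases like ``compatibility should drive every curvature component to zero'' and ``my expectation is that\ldots'' need to be replaced by the concrete mechanism above.
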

The case of nonzero torsion is treated in \ref{sec:non-trivial-torsion} wherein  Theorem   \ref{thm2} is expressed Cartan geometrically, resulting in constraints on the algebraic type of $\bT$ and $\bC$ as a binary quadric and quartic. Below is an excerpt of Corollary \ref{cor:3d-path-geometries-dancing} about such constraints in the case of \emph{real} para-CR 3-manifolds. 
\theoremstyle{plain}
\newtheorem*{corB}{\bf Corollary \ref{cor:3d-path-geometries-dancing}}
\begin{corB}
If a 3-dimensional path geometry with non-zero torsion, i.e. $\bT\neq 0,$ is defined by para-CR Lewy curves then the binary quadric representation of $\bT$ has two distinct real roots and the binary quartic representation of $\bC$ has at least two distinct real roots and no repeated roots.
\end{corB}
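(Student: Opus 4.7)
The plan is to translate the geometric conditions of Theorem \ref{thm2} into Cartan-geometric constraints on the fundamental invariants $\bT$ and $\bC$ of the 3-dimensional path geometry. By hypothesis, the path geometry is defined by para-CR Lewy curves with $\bT\neq 0$, so applying Theorem \ref{thm2} provides a real splitting $\scV=\scV_1\oplus\scV_2$ together with conditions (a)--(d), which we shall re-express on the Cartan bundle.

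For the first claim, I would invoke the remark following Theorem \ref{thm2}, which identifies the real line sub-bundles $\scV_1$ and $\scV_2$ with the eigenlines of the endomorphism $\bT\colon\scV\to\scV$. Under the $\mathrm{GL}(2)$-equivariant correspondence between such endomorphisms and binary quadrics, eigenlines correspond to roots of the quadric. Since $\scV_1\neq\scV_2$ are distinct real line sub-bundles and $\bT\neq 0$, this forces the binary quadric representation of $\bT$ to factor over $\RR$ into two distinct real linear forms.

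For the second claim, the plan is to work on the Cartan bundle of the path geometry using a coframe adapted to the splitting $\scV=\scV_1\oplus\scV_2$, in which the curvature $\bC$ is realised as a binary quartic on $\scV$. The integrability conditions (a) --- that $\scB_i=[\scX,\scV_i]$ are integrable for $i=1,2$ --- should translate, via the structure equations of the Cartan connection, into the vanishing of precisely those components of the curvature which encode that $\scV_1$ and $\scV_2$ are roots of $\bC$. Since $\scV_1\neq\scV_2$ and both are real, this already produces at least two distinct real roots. Next, one may factor $\bC=\ell_1\ell_2\,Q$, where $\ell_i$ is the linear form on $\scV$ with kernel $\scV_i$ and $Q$ is a residual binary quadric, and then use conditions (b) and (c) --- the existence of transverse sub-distributions $\tilde\scK_i$ yielding integrable $\scK_i=\tilde\scK_i\oplus\scV_j$ ($j\neq i$) together with the contact requirement on their projection to $N$ --- to rule out that $Q$ has a repeated root or that a root of $Q$ coincides with $\scV_1$ or $\scV_2$. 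This forces $\bC$ to have no repeated roots.

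The main technical obstacle is the careful Cartan-geometric bookkeeping required to pass from the geometric conditions (a)--(c) to the vanishing/non-vanishing of specific components of the curvature in the adapted coframing. In particular, the step ruling out that a root of $Q$ could coincide with $\scV_1$ or $\scV_2$ needs a sharper analysis than merely imposing the integrability conditions (a)--(b); condition (c) is expected to enter precisely to guarantee the non-degeneracy of the pair of integrable rank-3 distributions $\scK_1,\scK_2$, which algebraically amounts to non-vanishing of the discriminant of $\bC$.
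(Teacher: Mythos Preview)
Your approach to the first claim (distinct real roots of $\bT$) and to part of the second claim (that $\scV_1,\scV_2$ are real roots of $\bC$, coming from condition (a)) is correct and matches the paper's Proposition \ref{prop:torsion-curv-type-freestyling-reduction}: in the adapted coframe one gets $A_0=A_2=0$ and $W_0=W_4=0$.

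However, there is a genuine gap in your treatment of the ``no repeated roots'' assertion. You claim that conditions (b) and (c) of Theorem \ref{thm2} rule out repeated roots, and in particular that condition (c) ``algebraically amounts to non-vanishing of the discriminant of $\bC$.'' This is not so. In the adapted coframe, condition (c) only yields $W_2\neq 0$ (see \eqref{eq:W2-sign}--\eqref{eq:W2-zero-freestyling}); it does not control $W_1$ or $W_3$. With $W_0=W_4=0$ and $W_2\neq 0$, the quartic factors as $\beta^1\beta^2$ times a residual quadric, and $\beta^1=0$ (resp.\ $\beta^2=0$) becomes a double root precisely when $W_3=0$ (resp.\ $W_1=0$). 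Conditions (a)--(c) do \emph{not} exclude this: Proposition \ref{thm3} only concludes that $\bC$ has ``at most one real root of multiplicity two.''

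The paper rules out a repeated root by invoking condition (d), via a different mechanism than the one you sketch. Assuming, say, $W_1=0$, relation \eqref{eq:W13-A1} forces the invariant $Q_1$ in \eqref{eq:T1-C1-tM-T} to vanish, so the auxiliary para-CR structure $(N,\tilde\scD_1,\scD_2)$ is projective on $M_1$; a separate computation (Proposition \ref{prop:3d-path-geometries-dancing-projective}) shows $(N,\scD_1,\scD_2)$ is then co-projective on $M_1$. Condition (d) forces these two 2-dimensional path geometries to be equivalent, hence simultaneously projective and co-projective, hence flat. By Proposition \ref{prop:torsionfree-freestyling} the 3-dimensional path geometry would then be torsion-free, contradicting $\bT\neq 0$. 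Your proposal does not supply this argument, and the purely (a)--(c)-based route you outline cannot close the gap.
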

In the case of complex para-CR 3-manifolds,  the  conditions in Corollary \ref{cor:3d-path-geometries-dancing}  on the multiplicity of roots remain valid, unlike the reality conditions. 
Comparing the algebraic type of $\bC$ in Corollary \ref{cor:3d-path-geometries-dancing} and the algebraic type of $\bC$ for path geometries defined by para-CR chains in \cite[Theorem 1.1(1)]{KM-chains} shows a clear difference between such path geometries.
Subsequently, we show that the path geometry of Lewy curves allows one to recover the underlying para-CR structure, as has been shown for (para-)CR chains \cite{cheng,CZ-CR}.
\theoremstyle{plain} 
\newtheorem*{corD}{\bf Corollary \ref{cor:determination}} 
\begin{corD}
The path geometry of para-CR Lewy curves in dimension three determine the underlying para-CR structure up to the sign of the almost para-complex structure. 
\end{corD}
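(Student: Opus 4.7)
The plan is to recover the underlying para-CR structure directly from the intrinsic invariants of the path geometry of Lewy curves, with the only freedom being the interchange of the two Legendrian line bundles $\scD_1,\scD_2$, which amounts to reversing the sign of the almost para-complex structure.

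First I would reconstruct the splitting $\scV=\scV_1\oplus\scV_2$ of the vertical bundle intrinsically from the path geometry. In the case where the torsion $\bT$ is non-zero, Corollary \ref{cor:3d-path-geometries-dancing} asserts that the binary quadric representation of $\bT$ has two distinct real roots (and in the complex para-CR case, the analogous statement about distinctness of roots still holds). Viewing $\bT$ as an endomorphism of $\scV$, these roots yield two distinct eigendirections in $\scV$ which must coincide with $\scV_1$ and $\scV_2$ from Theorem \ref{thm2}. Since a binary quadric only records an \emph{unordered} pair of roots, this splitting is canonical precisely up to the swap $\scV_1\leftrightarrow\scV_2$, and no further labelling is possible.

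Next, with the splitting in hand, I would run the construction of Theorem \ref{thm2} in reverse: form $\scB_i=[\scX,\scV_i]$, identify the rank-two sub-distributions $\tilde\scK_i\subset\scB_i$ containing $\scV_i$, build the integrable rank-three distributions $\scK_i=\tilde\scK_i\oplus\scV_{3-i}$, and project them to $N=Q/\scV$ to obtain line bundles $\scD_1,\scD_2\subset TN$ whose sum is a contact distribution by condition (c). Because this procedure is manifestly the inverse of the assignment sending a para-CR $3$-manifold to its path geometry of Lewy curves, it recovers the original para-CR structure; swapping $\scV_1\leftrightarrow\scV_2$ swaps $\scD_1\leftrightarrow\scD_2$, which is precisely the replacement of the almost para-complex structure $K$ by $-K$.

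The remaining case $\bT\equiv 0$ is handled separately: by Theorem \ref{cor:3d-path-geometries-dancing-chains} the path geometry of Lewy curves then coincides with the chain path geometry of a flat (para-)CR structure, and the statement reduces to the analogous determination results for (para-)CR chains in \cite{cheng,CZ-CR} together with the local uniqueness of the flat model. The main obstacle I anticipate is the last point of rigidity, namely verifying that the reconstruction procedure yields exactly the input para-CR structure rather than merely \emph{some} structure producing the same family of Lewy curves; this hinges on checking that each step in the reverse construction is canonically determined, so that the only slack across the whole procedure is the single $\ZZ_2$ swap identified above.
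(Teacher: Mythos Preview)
Your proposal is correct and follows essentially the same approach as the paper: recover the unordered splitting $\scV=\scV_1\oplus\scV_2$ from the eigendirections of the torsion $\bT$ when $\bT\neq 0$, then push down via the construction of Theorem~\ref{thm2} to obtain $\scD_1,\scD_2$ up to swap; in the torsion-free case invoke flatness via Proposition~\ref{prop:torsionfree-freestyling}. The paper phrases this via invariance under diffeomorphisms preserving Lewy curves rather than as a direct reconstruction, but the content is the same, and your flagged concern about canonicity of the intermediate steps is absorbed by the proof of Theorem~\ref{thm2}, where $\scK_i=\varpi_*^{-1}\scD_i$ and $\tilde\scK_i=\scK_i\cap\scB_i$ are determined once the splitting of $\scV$ is fixed.
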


 In Appendix, structure equations  \eqref{eq:freestyling-streqs}   obtained by following the  proof of Proposition \ref{prop:torsion-curv-type-freestyling-reduction} is given.

\subsection{Conventions}\label{sec:conventions}

Our consideration in this paper will be over smooth and real or complex manifolds. We will work in the real category. The complex counterpart of our statements will be clear in most cases, and if there is a difference  we will clarify, which will only be limited to \ref{sec:path-geom-aris} when we discuss reality conditions for the roots of the torsion and curvature. As will be mentioned in the text, in the case of para-CR structures of dimension $2n+1$,$n\geq 2$, in order to define Lewy curves we need to assume analyticity. Throughout the article we always consider path geometry in the generalized sense, introduced in \ref{sec:path-geom-defin}, and therefore will not use the term ``generalized'' when talking about path geometries. When defining the leaf space of a foliation we always restrict to sufficiently small open sets where the leaf space is smooth and Hausdorff. Consequently, given an integrable distribution $\scD$ on a manifold $N$, by abuse of notation, we denote the leaf space of its induced foliation by $N/\scD$.

 When dealing with a $k$-dimensional subspace of a vector space, e.g.  $K\subset V,$ the corresponding element in the Grassmannian of $k$-planes is denoted by $[K]\in\mathrm{Gr}_{k}(V).$  

We will use the summation convention over  repeated upper and lower indices. Given  elements  $v_1,\dots,v_k$ of a vector space, their span is denoted by $\langle v_1,\dots,v_k\rangle.$ When dealing with differential forms, the algebraic ideal generated by 1-forms $\alpha^1,\dots,\alpha^k$ is denoted as $\{\alpha^1,\dots,\alpha^k\}.$  Given a set of 1-forms $\alpha^0,\dots,\alpha^n,\beta^1,\dots,\beta^n,$ the corresponding dual frame is denoted as $\frac{\partial}{\partial\alpha^0},\cdots \frac{\partial}{\partial\beta^n}.$ On a principal bundle $\cG\to Q$ with respect to which the 1-forms $\alpha^0,\dots,\alpha^n,\beta^1,\dots,\beta^n$ give a basis for semi-basic 1-forms,   we define the \emph{coframe derivatives} of a function $f\colon\cG\to \RR$  as 
\begin{equation*}
  f_{;i}=\tfrac{\partial}{\partial\alpha^i}\im\exd f,\   f_{;ij}=\tfrac{\partial}{\partial\alpha^j}\im\exd f_{;i},\ f_{;\underline{a}}=\tfrac{\partial}{\partial\beta^a}\im\exd f,\  f_{;\underline{ab}}=\tfrac{\partial}{\partial\beta^b}\im\exd f_{;\underline a},\  f_{;\underline{a}i}=\tfrac{\partial}{\partial\alpha^i}\im\exd f_{;\underline a},\ f_{;i\underline{a}}=\tfrac{\partial}{\partial\beta^a}\im\exd f_{;i} 
\end{equation*}
and similarly for higher orders, where $0\leq i,j\leq n$ and $1\leq a,b\leq n.$ Note that in case we reduce the structure bundle of a geometric structure to a proper principal sub-bundle, by abuse of notation, we suppress the pull-back and use the same notation as above for the coframe derivatives on the reduced bundle. 

Lastly, given two distributions $\scD_1$ and $\scD_2,$ we denote by $[\scD_1,\scD_2]$ the distribution whose sheaf of sections is  $\Gamma([\scD_1,\scD_2])=\Gamma(\scD_1)+\Gamma(\scD_2)+[\Gamma(\scD_1),\Gamma(\scD_2)].$ The sheaf of sections of  $\bigwedge^k(T^* M)$ is denoted by $\Omega^k(M).$

\section{Preliminaries on  path geometries and para-CR structures}\label{sec:path-geom-review}

In this section, we  overview some well-known facts about path geometries and para-CR structures. In particular, after defining these structures, we  show how they correspond to a class of differential equations. We point out that 3-dimensional para-CR structures coincide with path geometries on surfaces. Lastly, we give the solution of the equivalence problem of these structure using a Cartan connection. All the results and constructions will be stated in the real setting from which the complex analogues can be easily found. 

\subsection{Path geometries and  systems of 2nd order ODEs} \label{sec:path-geom-defin} 
A path geometry  on an $(n+1)$-dimensional manifold $M$ is classically defined as a $2n$-parameter family of paths with the property that along each direction at any point of $M,$ there passes a unique path in that family. Consequently, the natural lift of the paths of a path geometry to the projectivized tangent bundle, $\PP TM,$ results in a foliation by curves which are transverse to the fibers $\PP T_xM.$

Path geometries are modeled by the flag variety of pairs of incident lines and 2-planes i.e. 
\begin{equation}\label{eq:flat-path-geom}
  \FF_{1,2}(\RR^{n+2}):=\{(\ell,P)\ \vline\ \ell\subset P, [\ell]\in\PP^{n+1},[P]\in\mathrm{Gr_2(\RR^{n+2})}\}\cong G\slash P_{1,2}
  \end{equation}
where $G=\mathrm{PGL}(n+2,\RR),$ $[\ell]$ and $[P]$ denote the line $\ell$ and the 2-plane $P$ as an element of  the projective space and the Grassmannian of 2-planes, respectively,  and $P_{1,2}$ is the parabolic subgroup which is the stabilizer of a chosen origin. Alternatively, $\FF_{1,2}(\RR^{n+2})$ can be identified with $\PP T\PP^{n+1},$  i.e.  the projectivized tangent bundle of the projective space $\PP^{n+1},$ equipped with the foliation given by the canonical lift of projective lines.

It is well-known that a path geometry on an $(n+1)$-dimensional manifold can be locally defined in terms of  a system of $n$ second order ODEs 
\begin{equation}\label{systemODE}
   (z^i)''=F^i(t,z,z'),\quad t\in\R,\ \ z=(z^1,\dots,z^n),\quad 1\leq i\leq n,
\end{equation}
defined up to \emph{point transformations},  i.e.  
\[t\mapsto \tilde t=\tilde t(t,z^1,\ldots,z^{n}),\quad z^i\mapsto \tilde z^i=\tilde z^i(t,z^1,\ldots,z^{n}),\quad 1\leq i\leq n.\]
 Given a path geometry, its straightforward to see how it defines a  system of $n$ 2nd order ODEs: let $(z^0,\dots,z^n)$ be local coordinates on $V\subset M$. In a sufficiently small open set $U\subset \PP TM,$ the family of paths can be parametrized as $s\mapsto \gamma(s)=(z^0(s),\dots,z^n(s))$ for $s\in (a,b)\subset\RR.$ Each path is determined by   $\gamma(s)$ and $\gamma'(s)$ at $s=s_0\in U.$ Thus, taking another derivative, a system of $n+1$ 2nd order ODEs of the form $\gamma''= G(\gamma,\gamma')$ is obtained for a function   $G\colon \RR^{2n+2}\to\RR^{n+1}.$ If $U\subset \PP TM$ is sufficiently small, without loss of generality one can assume $\frac{\exd z^0}{\exd s}\neq 0$ in $U.$  Since the paths are given up to reparametrization, one is able to eliminate $s$ from the system $\gamma''=G(\gamma,\gamma').$ As a result, one arrives at the system of ODEs \eqref{systemODE} where $t:=z^0.$

Conversely, starting with an  equivalence class of system of $n$ 2nd order  ODEs under point transformations  \eqref{systemODE}, the system defines  a codimension $n$ submanifold $\cE\subset J^2(\RR,\RR^n)$ of the 2-jet space of  $n$ functions of 1 variable.  Pulling-back  the natural contact system on $J^2(\RR,\RR^n)$ to $\cE,$ one can identify $\cE$ with $J^1(\RR,\RR^n)$ which is additionally foliated by contact curves, i.e. the solution curves of the ODE system.  Locally, $J^1(\RR,\RR^n)\to J^0(\RR,\RR^n)\cong\RR^{n+1}$ can be identified as an open subset of the projectivized tangent bundle $\PP TM\to M$ for an $(n+1)$-dimensional manifold $M$ and the solution curves project to a $2n$-parameter family of paths on $M.$ However, for arbitrary ODE systems it may happen that  no path is tangent to some directions at some points of $M$. 

One can easily find  path geometries that do not fit the classical description since the paths are only defined along  an open set of directions. Thus, in order to study  path geometries one is led to work with a generalized notion of such structures as defined below, which is sometimes referred to as \emph{generalized path geometry}. However, in this article since path geometries for us are always defined in this generalized sense we will not use the term generalized.
\begin{definition}
\label{def:generalized-path-geom}
An $(n+1)$-dimensional path geometry  is given by a triple $(Q,\scV,\scX)$   where $Q$ is a $(2n+1)$-dimensional manifold  equipped with a  pair of distributions $(\scX,\scV)$ of rank 1 and $n,$ respectively,  which intersect trivially and satisfy $[\scV,\scV]=\scV$ and $[\scX,\scV]=T Q$. The $(n+1)$-dimensional local leaf space of the foliation induced by $\scV,$ denoted as $Q\slash \scV,$ is said to be equipped with the path geometry $(Q,\scV,\scX)$. 
\end{definition}
Two path geometries $(Q_i,\scX_i,\scV_i),$ $i=1,2$ are called (locally) equivalent if there exists  a (local)  diffeomorphism $f\colon Q_1\to Q_2$ such that $f_*(\scX_1)=\scX_2$ and $f_*(\scV_1)=\scV_2.$  

The rank $(n+1)$  distribution spanned by $\scX$ and $\scV$  induces a    \emph{multi-contact} structure on $Q,$ i.e. one can write  $\scX\oplus\scV=\ker\{\alpha^1,\ldots,\alpha^n\}$ for some 1-forms $\alpha^0,\alpha^a,\beta^b,$ 
such that  $\scX=\ker\{\alpha^a,\beta^b\}_{a,b=1}^n,$ $\scV=\ker\{\alpha^0,\ldots,\alpha^n\}$  and 
\[\exd \alpha^i\equiv \alpha^0\w\beta^i\ \ \mathrm{mod\ \ }\{\alpha^1,\ldots,\alpha^n\},\]
for all $1\leq i\leq n$. 

One can easily check that Definition \ref{def:generalized-path-geom} is satisfied for a classical path geometry on an $(n+1)$-dimensional manifold by letting $Q=\PP TM$ and $\scV$ be the vertical tangent space of the fibration $\PP TM\to M$  and $\scX$ be the line field tangent to the canonical lift of  paths on $M$ to $\PP TM.$  In terms of the point equivalence class of a system of second order ODEs \eqref{systemODE} one has
\begin{equation}
  \label{eq:TotalDerivative}
  \scX=\langle X_F\rangle\quad \text{where}\quad X_F:=\partial_t+ p^i\partial_{z^i}+ F^i\partial_{p^i},
\end{equation}
and
\[
  \scV=\langle\partial_{p^1},\ldots,\partial_{p^n}\rangle,
\]
where $p^i$'s are fiber coordinates for an affine chart of  $\PP TM\to M$. The vector field $X_F$ spanning $\scX$ is often called the total derivative vector field.

As was mentioned before, not all geometries arising from Definition \ref{def:generalized-path-geom} are classical path geometries. However, restricting to  sufficiently small open sets   $U\subset Q$ in Definition \ref{def:generalized-path-geom},  $U$ can be realized as an open set of $\PP TM$ for the $(n+1)$-dimensional manifold $M$ which is the leaf space of $\scV.$ Consequently,  $\scX$ foliates $U\subset\PP TM$ by curves that are transverse to the fibers of $\PP TM\rightarrow M.$ We refer the reader to \cite[Section 2]{Bryant-ProjFlat} for more about path geometry on surfaces and \cite[Section 4.4.2,4.4.4]{CS-Parabolic}  in higher dimensions.

\subsection{Para-CR structures and their double fibration}\label{sec:paraCR} 

In this section we review the basics of para-CR structures and their double fibration. We refer the reader to  \cite[Chapter X]{BookSegre}  for a review of the topic in the  CR setting  and to  \cite[Section 2]{DoubrovThe1} in the para-CR setting wherein the para-CR structures are referred to as  integrable Legendrian contact (ILC) structures. We start by the definition of a nondegenerate para-CR structure of hypersurface type, which we will refer to simply as a para-CR structure.    

\begin{definition}\label{def:para-cr-structures}
  A  para-CR structure defined on a $(2n+1)$-dimensional manifold   $N$  with a contact distribution $\scC\subset TN,$ is given by a splitting $\scC=\scD_1\oplus\scD_2$ where $\scD_1$ and $\scD_2$ are integrable and everywhere Lagrangian with respect to the induced conformal symplectic structure on $\scC.$ Such a para-CR structure will be denoted by $(N,\scD_1,\scD_2).$
\end{definition}
The definition above suggests a natural notion of duality between para-CR structures, as given in Definition \ref{def:duality-para-CR}. Two para-CR structures $(N,\scD_1,\scD_2)$ and $(\tilde N,\tilde\scD_1,\tilde\scD_2)$ are (locally) equivalent if there exists  a (local) diffeomorphism $f\colon N\to\tilde N$ such that $f_*(\scD_a)=\tilde\scD_a$ for $a=1,2.$

Note that the integral manifolds of $\scD_1$ and $\scD_2$ are Legendrian submanifolds of $N.$ The terminology para-CR structure has to with the fact that, in the real setting, $\scD_1$ and $\scD_2$ are  integrable $\pm 1$-eigenspaces of an \emph{almost para-complex structure} $K\colon \scC\to \scC,$ i.e. $K^2=\mathrm{Id}.$ Furthermore,  one has the compatibility condition $\cL(KX,KY)=-\cL(X,Y)$ , for all $X,Y\in\Gamma(\scC),$ where $\cL\colon\bigwedge^2\scC\to TN\slash\scC$ is the Levi bracket of $\scC.$ As a result, para-CR structures are the para-complex analogues of nondegenerate CR structures of hypersurface type.

Para-CR geometries are modeled by the flag variety of pairs of incident lines $\ell$ and hyperplanes $H,$ i.e. 
\begin{equation}\label{eq:flat-para-CR}
 G\slash P_{1,n+1}  \cong \FF_{1,n+1}(\RR^{n+2}):=\{(\ell,H)\ \vline\ \ell\subset H\}\subset  \PP^{n+1}\times(\PP^{n+1})^*,
  \end{equation}
where   $G=\mathrm{PGL}(n+2,\RR)$ and $P_{1,n+1}$ is the parabolic subgroup which is the stabilizer of a chosen origin. Alternatively, $\FF_{1,n+1}(\RR^{n+2})$ can be identified with $\PP T^*\PP^{n+1},$  i.e.  the projectivized cotangent bundle of the projective space $\PP^{n+1},$ equipped with its canonical contact structure and  Legendrian foliations given by the fibers of the  natural fibrations  $\PP T^*\PP^{n+1}\to\PP^{n+1}$ and $\PP T^*\PP^{n+1}\to(\PP^{n+1})^*$.

If a para-CR structure is not flat then one can locally define $M_1$ and $M_2$ as  the $(n+1)$-dimensional (local) leaf space of the integral manifolds of $\scD_2$ and $\scD_1,$ respectively. As a result, one has the double fibration
\begin{equation}\label{eq:double-fibration}
  N\slash\scD_2 =:M_1\overset{\pi_1}\longleftarrow N\overset{\pi_2}\longrightarrow M_2:=N\slash\scD_1,
\end{equation}
where $(\pi_1)_*(\scD_2)=0$ and $(\pi_2)_*(\scD_1)=0.$
\begin{proposition}\label{prop:jet-realization-para-cr-structures}
  Given a $(2n+1)$-dimensional para-CR manifold $(N,\scD_1,\scD_2),$  there is a local equivalence  $N\cong J^1(\RR^{n},\RR)$ together with  an embedding $J^1(\RR^{n},\RR)\hookrightarrow J^2(\RR^{n},\RR)$ given by the point equivalence class of a PDE system
  \begin{equation} \label{eq:pde-system-para-CR}
    \frac{\partial^2 z}{\partial t^i\partial t^j}=f_{ij}(t,z,\partial z),  
  \end{equation}
  for functions $f_{ij}=f_{ji}$ satisfying the compatibility condition
  \[\tfrac{\mathrm{D}}{\exd t^i}f_{jk}(t,z,p)=\tfrac{\mathrm{D}}{\exd t^j}f_{ik}(t,z,p),\]
  such that the fibers of $N\to M_1$ and $J^1(\RR^n,\RR)\to J^0(\RR^n,\RR)$ are identified     where variable $(t,z,p)=(t^1,\ldots,t^{n},z,p_{1},\ldots,p_{n})$ are  Darboux coordinates on $J^1(\RR^{n},\RR)$ with respect to which the contact form is given  by $\exd z-p_i\exd t^i,$    and  $\tfrac{\mathrm{D}}{\exd t^i}=\partial_{t^i}+p_i\partial_z+f_{ij}\partial_{p_j}$.
\end{proposition}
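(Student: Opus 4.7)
The strategy is to first put the contact manifold $(N,\scC)$ together with the Legendrian foliation $\scD_2$ into a standard Darboux normal form, and then read off the PDE system from the transverse Legendrian foliation $\scD_1$. Concretely, I would invoke the classical Darboux-type theorem for Legendrian foliations: any contact manifold equipped with a transverse Legendrian foliation is locally equivalent to $(J^1(\RR^n,\RR),\ker\theta)$ endowed with its vertical fibration, where $\theta=\exd z-p_i\exd t^i$. Applied to $\scD_2$ this produces Darboux coordinates $(t^1,\ldots,t^n,z,p_1,\ldots,p_n)$ in which $\scD_2=\langle\partial_{p_1},\ldots,\partial_{p_n}\rangle$, the contact form is $\theta=\exd z-p_i\exd t^i$, and the projection $\pi_1\colon N\to M_1$ becomes $J^1(\RR^n,\RR)\to J^0(\RR^n,\RR)$. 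In these coordinates the integrability of $\scD_2$ is automatic.

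Once this normal form is fixed, any Lagrangian complement of $\scD_2$ inside $\scC$ admits a local frame of the form $D_i=\partial_{t^i}+p_i\partial_z+A_i^j(t,z,p)\partial_{p_j}$: the $p_i\partial_z$-term is forced by $\theta(D_i)=0$, and the matrix $A_i^j$ parameterizes the choice of complement. A direct computation gives $\exd\theta(D_i,D_j)=A_j^i-A_i^j$, so the Lagrangian condition on $\scD_1$, which is part of the para-CR hypothesis, forces $A_i^j=A_j^i=:f_{ij}$. Substituting this symmetry back into the Lie bracket yields $[D_i,D_j]=(D_if_{jk}-D_jf_{ik})\,\partial_{p_k}$, which takes values in $\scD_2$. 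Since $\scD_1\cap\scD_2=0$, Frobenius integrability of $\scD_1$, the remaining para-CR hypothesis, is equivalent to the vanishing of all such brackets, i.e.\ to the compatibility condition $\tfrac{\mathrm{D}}{\exd t^i}f_{jk}=\tfrac{\mathrm{D}}{\exd t^j}f_{ik}$ with $\tfrac{\mathrm{D}}{\exd t^i}=D_i=\partial_{t^i}+p_i\partial_z+f_{ij}\partial_{p_j}$, exactly as stated.

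To finish, I would observe that the system $\partial^2 z/\partial t^i\partial t^j=f_{ij}(t,z,\partial z)$ cuts out a codimension $\binom{n+1}{2}$ submanifold of $J^2(\RR^n,\RR)$ whose projection to $J^1(\RR^n,\RR)$ is a diffeomorphism; this furnishes the asserted embedding $J^1(\RR^n,\RR)\hookrightarrow J^2(\RR^n,\RR)$. For the point-equivalence statement, two Darboux charts of the above type preserving $\scD_2$ differ by a contact transformation fixing the vertical fibration, and such contact transformations are exactly the prolongations of point transformations of $(t,z)$ on $M_1$; conversely, every such point transformation lifts to a contact equivalence of $J^1(\RR^n,\RR)$ preserving both Legendrian foliations. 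Hence the PDE system is well-defined up to the point equivalence class, and the para-CR structure and this class determine each other.

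The only non-routine step is the initial simultaneous Darboux normalization of $\scC$ and $\scD_2$ in the first paragraph; everything that follows is a linear-algebraic symmetry identification (the Lagrangian $\Rightarrow$ $A_i^j=A_j^i$ computation) together with a short Lie-bracket computation (Frobenius $\Rightarrow$ compatibility).
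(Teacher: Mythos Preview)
Your approach is essentially the same as the paper's: normalize one Legendrian foliation to the vertical $\langle\partial_{p_i}\rangle$ via a Darboux-with-Legendrian theorem, then write the complementary Legendrian distribution as $\langle\partial_{t^i}+p_i\partial_z+f_{ij}\partial_{p_j}\rangle$, deduce the symmetry $f_{ij}=f_{ji}$ from the Lagrangian condition, and obtain the compatibility relation from Frobenius integrability. The only discrepancy is a labeling swap---you normalize $\scD_2$ to the vertical (following the statement literally, since the fibers of $N\to M_1=N/\scD_2$ are $\scD_2$-leaves), whereas the paper's proof and subsequent formulas normalize $\scD_1$; this is an internal inconsistency of the paper rather than an error on your part, and is immaterial since the roles of $\scD_1,\scD_2$ are symmetric.
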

\begin{proof}
  Locally, as contact manifolds, one has a local diffeomorphism $\phi\colon N\to J^1(\RR^n,\RR)$ such that $\phi_*(\scD_1)=\scV$ where $\scV:=\langle\partial_{p^1},\ldots,\partial_{p^n}\rangle$ are vertical with respect to $J^1(\RR^n,\RR)\to J^0(\RR^n,\RR),$ and the contact form is given by $\exd z-p_i\exd t^i.$  Since $\scC=\scD_1\oplus\scD_2,$ one has $\phi_*(\scD_2)=\scE,$ where
  \begin{equation}\label{eq:E-distribution}
    \scE=\langle\partial_{t^i}+p_i\partial_z+f_{ij}\partial_{p_j}\rangle_{i=1}^n,
  \end{equation}
  for some functions $f_{ij}=f_{ji}$ on $J^1(\RR^n,\RR).$ It is straightforward to show that the integrability of $\scD_2$ is equivalent to $\tfrac{\mathrm{D}}{\exd t^i}f_{jk}=\tfrac{\mathrm{D}}{\exd t^j}f_{ik}.$ The existence of the functions $f_{ij}$ are equivalent to a section of $ J^2(\RR^n,\RR)\to J^1(\RR^n,\RR)$ given by the PDE system \eqref{eq:pde-system-para-CR}. 
\end{proof}
\begin{remark}\label{rmk:N-open-set} 
  For the purposes of this article, we will most often  restrict ourselves to sufficiently small open sets of  $N$ where a local coordinate system given in Proposition \ref{prop:jet-realization-para-cr-structures} exists. 
\end{remark}

Notice that the double fibration \eqref{eq:double-fibration} establishes a correspondence between points in $M_1$ and hypersurfaces in $M_2$, and conversely, points in $M_2$ and hypersurfaces in $M_1$. For a given $x\in M_1$ the corresponding hypersurface $H_x\subset M_2$ is defined as the  projection to $M_2$ of the integral leaf of $\scD_2$ corresponding to $x$, i.e. $H_x=\pi_2(\pi_1^{-1}(x))$. Similarly, for $y\in M_2$, we denote $H_y=\pi_1(\pi_2^{-1}(y))\subset M_1$.  
\begin{definition}\label{def:incidence}
Given a para-CR structure with double fibration \eqref{eq:double-fibration}, a pair $(x,y)\in M_1\times M_2$ is called incident if $x\in H_y$, or equivalently, $y\in H_x$. Otherwise, the pair is called non-incident.
\end{definition}

Proposition \ref{prop:jet-realization-para-cr-structures} describes the correspondence between points and hypersurfaces in terms of the PDE system \eqref{eq:pde-system-para-CR}. Assuming analyticity, the solutions $z=z(t)$ of system \eqref{eq:pde-system-para-CR}, where $t=(t^1,\ldots,t^n),$ are determined by the value of their 1st jet at a point. Thus each solution defines a hypersurface in $M_1$ and a Legendrian submanifolds of $N\cong J^1(\RR^n,\RR).$  Consequently, such a PDE system is of finite type whose solutions depend on $n+1$ parameters.  Its general solution can be written as a function
\begin{equation}\label{eq:Phi-definiting-function}
  \Phi(z,t^i,a_i,b)=0,
\end{equation}
where $(a^1,\ldots,a^n,b)$ are local coordinates for the solution space of the PDE system \eqref{eq:pde-system-para-CR}. Since by our previous discussion each solution  \eqref{eq:pde-system-para-CR} corresponds to an integral leaf of $\scD_1,$ the solution space can be identified as the leaf space of $\scD_1,$ which in \eqref{eq:double-fibration}, was denoted by $M_2.$

Using the general solution  \eqref{eq:Phi-definiting-function}, ones obtains a defining function for  $N$ viewed as a hypersurface in $M_1\times M_2$ given by
\begin{equation}\label{eq:defining-function-for-N}
  N=\{(x, y)\in M_1\times M_2 \ \vline\ \Phi(x,y)=0 \},
  \end{equation}
where $x=(t^i,z)$  and $y=(a^i,b)$ are local coordinate systems for $M_1$ and $M_2,$ respectively. In terms of a defining function
\[
H_x=\{y\in M_2\ | \ \Phi(x,y)=0\}\quad  \mathrm{and}\quad H_y=\{x\in M_1\ |\ \Phi(x,y)=0\}.
\]
The set of non-incident pairs $(x,y)$ is thus given as $M_1\times M_2\setminus N$.

\begin{remark}\label{rmk:para-cr-structures-2D-path}
  By Proposition \ref{prop:jet-realization-para-cr-structures}, $N$ can be locally identified as an open subset of the  projectivized cotangent bundle of $M_1$ or $M_2,$ denoted as $\PP T^* M_1$ and $\PP T^*M_2$.
Moreover, comparing Definitions \ref{def:generalized-path-geom} and \ref{def:para-cr-structures}, it is easy to see that para-CR 3-manifolds in which the contact distribution has a splitting into two line fields, is equivalent to 2-dimensional path geometries.     As a result, in dimension three $N$ can be locally identified as   $\PP TM_1$ or $\PP TM_2.$

Note that in the case of 3-dimensional para-CR structure the system of PDEs \eqref{eq:pde-system-para-CR} become a scalar 2nd order ODE
\begin{equation}\label{eq:sode}
\frac{\exd^2z}{\exd t^2}=f\left(t,z,\frac{\exd z}{\exd t}\right).
\end{equation}
Otherwise, the PDE system is overdetermined. By the local existence of solutions for ODEs, one does not need to assume analyticity in dimension 3 in order to have $N\subset M_1\times M_2$ in \eqref{eq:defining-function-for-N}.
\end{remark}

Lastly, we define a notion of duality among para-CR structures arising from  the jet realization of Proposition \ref{prop:jet-realization-para-cr-structures}. It is the para-CR analogue of \emph{complex conjugation} in the CR setting. 
\begin{definition}\label{def:duality-para-CR}
Given a para-CR structure $(N,\scD_1,\scD_2),$ its dual is the para-CR structure $(N,\scD_2,\scD_1).$  A para-CR manifold is called self-dual if its equivalent to its dual.
\end{definition} 
As a result, one can define (local) \emph{anti-automorphisms} of a para-CR structure as (local) diffeomorphisms $f\colon N\to N$ such that $f_*(\scD_1)=\scD_2$ and $f_*(\scD_2)=\scD_1.$

The study of duality among 3-dimensional para-CR structures goes back to Cartan and can be formulated in terms of duality among scalar second order ODEs. From Proposition \ref{prop:jet-realization-para-cr-structures} it immediately follows that the dual para-CR structure  corresponds to the point equivalence class of a PDE system
\begin{equation} \label{eq:dual-pde-system-para-CR}
\frac{\partial^2 b}{\partial a^i\partial a^j}=\tilde f_{ij}(a,b,\partial b),  
\end{equation}
for some functions $\tilde f_{ij}=\tilde f_{ji}$ satisfying the appropriate integrability condition.  
In the 3-dimensional case one obtains  the scalar dual ODE
\begin{equation}\label{eq:dual}
\frac{\exd^2 b}{\exd a^2}=\tilde f\left(a,b,\frac{\exd b}{\exd a}\right),
\end{equation}

\begin{example}\label{exa:flat-para-CR}
  In a flat CR structure one can set $f_{ij}=0$ in \eqref{eq:pde-system-para-CR} which results in the PDE system $z_{ij}=0$ whose solutions are given by
  \begin{equation}\label{eq:flat-para-CR-PDE}
    z=a_it^i+b,
      \end{equation}
  where  $(a_i,b)$ parameterize solutions. Hence    a defining function \eqref{eq:Phi-definiting-function} is given by
  \begin{equation}\label{exa:flat-para-CR-function}
  \Phi(z,t^i,a_i,b)=z-a_it^i-b.
  \end{equation}
  Note that in \eqref{eq:flat-para-CR-PDE}  one can treat $b$ as a function of independent variables $a_i.$ Differentiating twice and eliminating $z$ and $t^i$'s, one obtains the PDE system $b_{ij}=0,$ which shows that the flat para-CR structure is self-dual. 
\end{example}
\begin{example}\label{exa:self-dual-non-flat}
Since  3-dimensional para-CR structures correspond to point equivalence classes of   scalar 2nd order ODEs, consider the ODE   $z''=\sqrt{z'}.$ Solving explicitly, one   obtains a defining function to be 
\begin{equation}\label{exa:self-dual-non-flat-function}
\Phi(z,t,a,b)= z-(\tfrac{1}{12}t^3+\tfrac 14 b t^2+\tfrac 14b^2t+a).
\end{equation}
\end{example}

\subsection{Cartan connection for path geometries and para-CR structures}\label{sec:solut-equiv-probl}  
In this section we present a solution to the equivalence problem of path geometries that is due to Cartan \cite{Cartan-Proj} for surfaces and due to Grossman and Fels  \cite{Grossman-Thesis,Fels} in higher dimensions. Similarly, for para-CR structures there is a canonical Cartan connection, defined via the parabolic geometric machinery, which has been studied in \cite{DoubrovThe1}. First we recall  the definition of a Cartan geometry. 
\begin{definition}
  Let $G$ be a Lie group and $P\subset G$ a Lie subgroup  with Lie algebras $\fg$ and $\fp\subset\fg,$ respectively.  A Cartan geometry of type $(G,P)$ on $Q,$ denoted as $(\cG\to Q,\psi),$ is a right principal $P$-bundle $\tau\colon\cG\to Q$ equipped with a Cartan connection $\psi\in\Omega^1(\cG,\fg),$ i.e. a $\fg$-valued 1-form on $\cG$ satisfying
  \begin{enumerate}
  \item $\psi$ is $P$-equivariant, i.e. $r_g^*\psi=\mathrm{Ad}_{g^{-1}}\circ\psi$ for all $g\in P.$
  \item  $\psi_z\colon T_z\cG\to \fg$ is  a linear isomorphism for all $z\in \cG.$
  \item $\psi$ maps fundamental vector fields to their generators, i.e. $\psi(\zeta_X)=X$ for any $X\in\fp$ where $\zeta_X(z):=\frac{\exd}{\exd t}\,\vline_{\,t=0}r_{exp(tX)}(z).$
  \end{enumerate} 
 The 2-form $\Psi\in\Omega^2(\cG,\fg)$ defined as
    \[\Psi(u,v)=\exd\psi(u,v)+[\psi(X),\psi(Y)]\quad \text{for\ \ }X,Y\in \Gamma(T\cG),\]
is called the Cartan curvature and is $P$-equivariant and semi-basic with respect to the fibration $\cG\to Q.$
\end{definition} 

Now we give a solution to the equivalence problem of path geometries in dimensions larger than 2.
\begin{theorem}\label{thm:path-geom-cartan-conn}
Every path geometry $(Q^{2n+1},\scX,\scV),$  as in Definition \ref{def:generalized-path-geom},   defines Cartan geometry  $(\cG\to Q,\psi)$ of type $(\mathrm{PGL}(n+2,\RR),P_{1,2})$ where $P_{1,2}\subset \mathrm{PGL}(n+2,\RR)$ is the parabolic subgroup preserving the flag of a line and a 2-plane in $\RR^{n+2}.$ Assume that  the distributions $\scX$ and $\scV$  are the projection of the distributions  $\ker\{\alpha^i,\beta^i\}_{i=1}^n$ and $\ker\{\alpha^i\}_{i=0}^n,$ respectively.   When $n\geq 2$, the Cartan connection and its curvature can be  expressed as
  \begin{equation}
  \label{eq:path-geom-cartan-conn-3D}  
  \psi=
  \def\arraystretch{1.3}
\begin{pmatrix}    
 -\psi^i_i-\psi^0_0 &\mu_0& \mu_j\\
\alpha^0 &\psi^0_0&\nu_j\\
\alpha^i&  \beta^i& \psi^i_j\\
\end{pmatrix}    
\quad \mathrm{and}\quad \exd\psi+\psi\w\psi=
  \def\arraystretch{1.3}
\begin{pmatrix}    
0 &M_0& M_j\\
0 &\Psi^0_0&V_j\\
0 &   B^i& \Psi^i_j\\
\end{pmatrix},    
    \end{equation}
where $1\leq i,j\leq n$ and 
\[B^i=T^i_j\alpha^0\w\alpha^j+\half T^i_{jk}\alpha^j\w\alpha^k,\qquad \mathrm{and}\quad \Psi^i_j=C^i_{jkl}\alpha^k\w\beta^l+\half T^i_{jkl}\alpha^k\w\alpha^l +T^i_{j0k}\alpha^0\w\alpha^k. \]
The fundamental invariants of a path geometry  are the \emph{torsion}, $\mathbf{T}=(T^i_j)_{1\leq i,j\leq n},$ and the \emph{curvature}, $\mathbf{C}=(C^i_{jkl})_{1\leq i,j,k,l\leq n}$, satisfying
\begin{equation}
  \label{eq:fels-invariants-path}
  T^i_i=0,\qquad C^i_{jkl}=C^i_{(jkl)},\qquad C^i_{ijk}=0.
\end{equation}
\end{theorem}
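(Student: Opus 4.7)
My plan is to realize the equivalence problem as a regular, normal parabolic geometry of type $(G,P_{1,2})$ with $G=\mathrm{PGL}(n+2,\RR)$. First I would identify the matching infinitesimal model: the parabolic $P_{1,2}$ induces a $|2|$-grading $\mathfrak{sl}(n+2) = \fg_{-2}\oplus\fg_{-1}\oplus\fg_0\oplus\fg_1\oplus\fg_2$ in which $\fg_{-1} = \fg_{-1}^{\scX}\oplus\fg_{-1}^{\scV}$ decomposes into two $\fg_0$-irreducibles of dimensions $1$ and $n$, while $\fg_{-2}$ has dimension $n$. The algebraic bracket $[\fg_{-1}^{\scX},\fg_{-1}^{\scV}]=\fg_{-2}$ matches the analytic identity $[\scX,\scV]=TQ$, and $[\fg_{-1}^{\scV},\fg_{-1}^{\scV}]=0$ matches integrability of $\scV$; together these show that $(Q,\scX,\scV)$ carries a regular infinitesimal flag structure modelled on $\fg_-=\fg_{-2}\oplus\fg_{-1}$.

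Next, by the general Tanaka prolongation theorem (equivalently the \v Cap--Slov\'ak correspondence for parabolic geometries, or directly by Cartan's method of equivalence) there exists a unique regular normal Cartan geometry $(\cG\to Q,\psi)$ of type $(G,P_{1,2})$ inducing this underlying structure. A concrete construction proceeds as follows: start with a local coframe $(\alpha^0,\alpha^i,\beta^i)$ adapted to the filtration $\scV\subset\scX\oplus\scV\subset TQ$ and its internal splitting, lift to the $P_{1,2}/P_{1,2}^+$-bundle of equivalent adapted coframes, and successively absorb apparent torsion into the pseudo-connection forms $\psi^0_0,\psi^i_j,\mu_0,\mu_j,\nu_j$. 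The gauge freedom in these forms is exhausted precisely by the Kostant codifferential normalization $\partial^*\Psi=0$, which forces the trace and symmetry conditions \eqref{eq:fels-invariants-path}. Assembling the result into an $\mathfrak{sl}(n+2)$-valued one-form yields the matrix \eqref{eq:path-geom-cartan-conn-3D}.

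The main obstacle is pinning down which curvature components survive the normality reduction. For this I would invoke Kostant's theorem to compute $H^2(\fg_-,\fg)$ as a $\fg_0\cong\mathfrak{gl}(n)\oplus\RR$ module: for $n\geq 2$ this cohomology splits into exactly two irreducible components. The first sits in bidegree $(\alpha^0,\alpha^j)$ with $\fg_0$-type equal to the traceless endomorphisms of $\scV$, yielding the torsion $\bT=(T^i_j)$ with $T^i_i=0$; the second sits in bidegree $(\alpha^k,\beta^l)$ with $\fg_0$-type the trace-free totally symmetric $(1,3)$-tensors, yielding the curvature $\bC=(C^i_{jkl})$ with $C^i_{jkl}=C^i_{(jkl)}$ and $C^i_{ijk}=0$. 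All remaining coefficients in $B^i$, $\Psi^i_j$, $V_j$, $M_0$, $M_j$ are then determined by $\bT$, $\bC$ and their covariant derivatives via the Bianchi identity $\exd\Psi=[\Psi,\psi]$, which constitutes the technical heart of the verification.
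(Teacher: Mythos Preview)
The paper does not actually prove this theorem: it is stated as a known result, attributed to Cartan for surfaces and to Grossman and Fels in higher dimensions, with no argument supplied beyond the references. Your proposal therefore cannot be compared against an in-paper proof, but it is a correct and standard modern route to the result. Framing the problem as a regular normal parabolic geometry of type $(\mathrm{PGL}(n+2,\RR),P_{1,2})$, invoking the Tanaka--\v{C}ap--Slov\'ak prolongation to obtain existence and uniqueness of the normal Cartan connection, and then using Kostant's theorem to read off the two irreducible pieces of $H^2(\fg_-,\fg)$ as the trace-free torsion $\bT$ and the totally symmetric trace-free curvature $\bC$ is exactly how this theorem is understood in the parabolic-geometry literature; it is also close in spirit to Grossman's treatment. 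The original arguments of Fels proceed more directly via Cartan's equivalence method with explicit absorption and normalization, which is essentially the ``concrete construction'' you outline in your second paragraph; your sketch thus covers both viewpoints, and nothing in it is wrong.
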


If $(t,z^i)$ and $(t,z^i,p^i)$ are local jet coordinates on $\RR\times \RR^{n}=J^0(\RR,\RR^n)$ and $J^1(\RR,\RR^n),$ respectively, then in terms of the system of ODEs \eqref{systemODE}, one obtains 
\begin{equation}
  \label{eq:fels-invariants-explicit}
  T^i_j=F^i_j-\textstyle{\frac{1}{2}\delta^i_jF^k_k},\qquad\qquad C^i_{jkl}=F^i_{jkl}-\textstyle{\frac{3}{4}}F^r_{r(jk}\delta^i_{l)}
\end{equation}
where $1\leq i,j,k,l\leq n$ and 
\begin{equation}
  \label{eq:fels-torsion-explicit-components}
  F^i_{j}=\textstyle{- F^i_{z^j}+\frac{1}{2}\Dt (F^i_{p^j})-\frac{1}{4} F^i_{p^k} F^k_{p^j},}\qquad F^i_{jkl}=F^i_{p^jp^kp^l},
\end{equation}
and, as before, $X_F=\partial_t+p^i\partial_{z^i}+F^i\partial_{p^i}$ is the total derivative vector field of the system.

Before stating Cartan's solution of the equivalence problem of path geometries on surfaces, which by Remark \ref{rmk:para-cr-structures-2D-path}, coincides with the one for para-CR 3-manifolds, we state the solution of equivalence problem for para-CR structures, which, as a class of parabolic geometries, were treated in \cite{CapCrelle,DoubrovThe1}.
\begin{theorem}\label{thm:paraCR-cartan-conn}
Every para-CR structure $(N^{2n+1},\scD_1,\scD_2),$  as in Definition \ref{def:para-cr-structures},   defines Cartan geometry  $(\cP\to N,\phi)$ of type $(\mathrm{PGL}(n+2,\RR),P_{1,n+1})$ where $P_{1,n+1}\subset \mathrm{PGL}(n+2,\RR)$ is the parabolic subgroup preserving the flag of a line in a hyperplane in $\RR^{n+2}.$ Assume that  the distributions $\scD_2,\scD_1\subset TN$  are the projection of the distributions  $\ker\{\omega^i\}_{i=0}^n$  and $\ker\{\omega^0,\omega^{n+a}\}_{a=1}^n,$ respectively.   When $n\geq 2$, the Cartan connection and its curvature can be  expressed as
  \begin{equation}
  \label{eq:paraCR-geom-cartan-conn-3D}  
  \phi=
  \def\arraystretch{1.3}
\begin{pmatrix}    
 -\phi^i_i-\phi^{n+1}_{n+1} &\theta_{j+n}& \theta_0\\
\omega^i &\phi^i_j&\theta^i\\
\omega^0&  \omega_{j+n}& \phi^{n+1}_{n+1}\\
\end{pmatrix}    
\quad \mathrm{and}\quad \exd\phi+\phi\w\phi=
  \def\arraystretch{1.3}
\begin{pmatrix}    
0 &\Theta_{j+n}& \Theta_0\\
0 &\Phi^i_j&\Theta^i\\
0 &   0& 0 \\
\end{pmatrix},    
    \end{equation}
where $1\leq i,j\leq n$ and 
\[\Phi^i_j\equiv K^{il}_{jk}\omega^k\w\omega_{l+n}\quad \mod\quad \{\omega^0\}. \]
The fundamental invariant of a para-CR is given by the structure functions $(K^{il}_{jk})_{1\leq i,j,k,l\leq n}$ which are symmetric in $(i,j)$ and $(k,l)$ and trace-free in $(i,j).$ 
\end{theorem}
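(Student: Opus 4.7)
The plan is to realize para-CR structures as regular normal parabolic geometries of type $(\mathrm{PGL}(n+2,\RR),P_{1,n+1})$ and invoke the general Tanaka--\v{C}ap--Schichl existence and uniqueness theorem, rather than carry out an equivalence method by hand. First I would identify the relevant $|2|$-grading: the parabolic $\mathfrak{p}_{1,n+1}\subset \mathfrak{sl}(n+2)$ stabilizing a flag $\ell\subset H$ in $\RR^{n+2}$ induces a grading $\mathfrak{g}=\mathfrak{g}_{-2}\oplus\mathfrak{g}_{-1}\oplus\mathfrak{g}_0\oplus\mathfrak{g}_1\oplus\mathfrak{g}_2$ in which $\mathfrak{g}_{-2}\cong \RR$, $\mathfrak{g}_{-1}\cong V\oplus V^{*}$ with $V=\RR^{n}$, the bracket $\mathfrak{g}_{-1}\wedge\mathfrak{g}_{-1}\to \mathfrak{g}_{-2}$ is the canonical pairing (vanishing on $V\wedge V$ and $V^{*}\wedge V^{*}$), and $\mathfrak{g}_0\cong \mathfrak{gl}(V)\oplus \RR$ acts on $\mathfrak{g}_{-1}$ preserving the splitting.

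Next I would match this algebraic symbol with the infinitesimal data of a para-CR structure. The filtration $TN\supset \scC\supset 0$ has associated graded $(TN/\scC)\oplus \scC$; the splitting $\scC=\scD_1\oplus \scD_2$ corresponds pointwise to $\mathfrak{g}_{-1}=V\oplus V^{*}$, while the contact quotient $TN/\scC$ is identified with $\mathfrak{g}_{-2}$ via the conformal class of the contact form. The assumptions that $\scD_1,\scD_2$ are integrable Lagrangian subbundles of a contact distribution are precisely equivalent to the algebraic bracket on the associated graded agreeing with the model bracket, so $(N,\scD_1,\scD_2)$ is a regular infinitesimal flag structure of type $\mathfrak{g}_{-}$. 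The general theorem of \cite{CapCrelle} (a special case of Tanaka's prolongation procedure) then produces a canonical principal $P_{1,n+1}$-bundle $\cP\to N$ together with a unique regular normal Cartan connection $\phi\in\Omega^{1}(\cP,\mathfrak{sl}(n+2))$. The matrix form displayed in \eqref{eq:paraCR-geom-cartan-conn-3D} is simply the block decomposition of $\mathfrak{sl}(n+2)$ along the flag $\ell\subset H$: the entries $(\omega^{0},\omega^{i},\omega_{j+n})$ realize the soldering form on $\mathfrak{g}_{-2}\oplus \mathfrak{g}_{-1}$, the block $\phi^{i}_{j}$ together with the scalar $\phi^{n+1}_{n+1}$ exhausts $\mathfrak{g}_{0}$, and the $\theta$-entries lie in $\mathfrak{g}_{1}\oplus \mathfrak{g}_{2}$.

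The form of the curvature is then extracted from Kostant's theorem on $H^{2}(\mathfrak{g}_{-},\mathfrak{g})$. Regularity together with the normality condition $\partial^{*}\Psi=0$ force the vanishing of all curvature components of non-positive homogeneity; for the parabolic of type $(A_{n+1},\{1,n+1\})$ this wipes out the torsion entirely, which is exactly the statement that the bottom row of the curvature matrix in \eqref{eq:paraCR-geom-cartan-conn-3D} vanishes. The harmonic curvature is concentrated in the $\mathfrak{g}_{0}$-block, and Kostant's branching identifies it with the unique irreducible $\mathfrak{g}_{0}$-summand appearing in $H^{2}(\mathfrak{g}_{-},\mathfrak{g})$ at positive homogeneity, which carries precisely the Young-symmetry pattern asserted for the tensor $K^{il}_{jk}$.

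The main non-routine step is the Kostant cohomology computation pinning down the harmonic component and its exact symmetry type; this is however a textbook application of Kostant's formula for the two-crossed-node parabolic of $A_{n+1}$ and is tabulated in the standard references on parabolic geometries. A completely self-contained alternative, essentially the one followed in \cite{DoubrovThe1}, is to perform Cartan's method of equivalence on a coframe adapted to the flag $\scD_1\oplus \scD_2\subset \scC\subset TN$: introduce pseudo-connection forms, absorb torsion stepwise using the available group freedom, and verify that the remaining normalizations reduce the structure group down to $P_{1,n+1}$ and produce precisely the displayed curvature with its stated symmetries.
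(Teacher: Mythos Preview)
The paper does not supply its own proof of this theorem; it simply records the statement and cites \cite{CapCrelle,DoubrovThe1}, so your plan to invoke the general Tanaka--\v{C}ap--Schichl machinery for regular normal parabolic geometries is exactly the approach behind those references and is the right one.

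There is one genuine imprecision worth fixing. You write that the Lagrangian and integrability hypotheses on $\scD_1,\scD_2$ are ``precisely equivalent to the algebraic bracket on the associated graded agreeing with the model bracket,'' and then that regularity plus normality ``wipes out the torsion entirely.'' These two claims conflate distinct conditions. The Lagrangian condition alone (together with $\scC$ being contact) is what makes the symbol isomorphic to $\fg_{-}$ and hence yields a regular infinitesimal flag structure; integrability of $\scD_1$ and $\scD_2$ is an additional constraint, not part of regularity. Correspondingly, regularity and normality only kill curvature of non\-positive homogeneity. For the $|2|$-grading of $A_{n+1}$ with both end nodes crossed and $n\geq 2$, Kostant's computation gives harmonic components in homogeneity~$1$ (torsion, valued in $\fg_{-1}$, measuring the failure of $\scD_1$ and $\scD_2$ to be integrable) in addition to the homogeneity~$2$ component in $\fg_0$. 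It is the integrability hypothesis in Definition~\ref{def:para-cr-structures}, not normality, that forces the homogeneity~$1$ pieces to vanish and hence produces the zero bottom row in the curvature matrix of \eqref{eq:paraCR-geom-cartan-conn-3D}. Once you insert this step, the remaining harmonic curvature is the single $\fg_0$-valued component $K^{il}_{jk}$ with the stated symmetries, and your argument goes through.
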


In terms of the corresponding PDE system of a para-CR structure, as described in Proposition \ref{prop:jet-realization-para-cr-structures}, in \cite{DoubrovThe1} its was shown that  the fundamental invariant  can be expressed parametrically as
\[K^{il}_{jk}=S^{il}_{jk}-\tfrac{4}{n+2}\delta^{(i}_{(j}S^{l)}_{k)}+\tfrac{1}{(n+1)(n+2)}(\delta^i_j\delta^l_k+\delta^l_j\delta^i_k)S,\]
where
\[S^{il}_{jk}=-\tfrac{\partial f_{jk}}{\partial p_{i}\partial p_l},\quad S^i_j=S^{ik}_{jk},\quad S=S^{ij}_{ij}.\]

Now we state Cartan's solution of the equivalence problem for the special case of 3-dimensional para-CR structures which coincides with path geometries on surfaces. 
\begin{theorem}\label{thm:2D-path-geome}
 Given a 3-dimensional para-CR structure  $(N,\scD_1,\scD_2),$ it defines a Cartan geometry $(\cP \to N,\phi)$ of type $(\mathrm{PGL}(3,\RR),P_{1,2})$ where $P_{1,2}$ is the subgroup of upper diagonal matrices and
  \begin{equation}
  \label{eq:2D-path-geom-cartan-conn}  
  \phi=
  \def\arraystretch{1.3}
\begin{pmatrix}    
 -\phi^1_1-\phi^2_2 & \theta_2 &\theta_0\\
 \omega^1 &\phi^1_1&\theta_1\\
\omega^0&  \omega^2& \phi^2_2\\
\end{pmatrix},
\quad \exd\phi+\phi\w\phi=  \begin{pmatrix}
    0 & P_1 \omega^0 \w\omega^1 & P_2\omega^0 \w\omega^1+Q_2\omega^0\w\omega^2\\
    0 & 0 & Q_1\omega^0\w\omega^2\\
0 & 0 & 0
  \end{pmatrix}
\end{equation}
for some functions $P_1,P_2,Q_1,Q_2$ on $\cP.$ Structure functions $P_1$ and $Q_1$ are the \emph{fundamental invariants}.
\end{theorem}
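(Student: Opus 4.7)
The plan is to carry out Cartan's method of equivalence for the geometric structure $(N,\scC,\scD_1,\scD_2)$, exactly as Cartan did for 2nd order ODEs under point transformations (using Remark~\ref{rmk:para-cr-structures-2D-path}). Start by choosing an adapted coframe $(\omega^0,\omega^1,\omega^2)$ on $N$ with $\omega^0$ a contact form (annihilator of $\scC$), $\omega^1$ annihilating $\scD_2$, $\omega^2$ annihilating $\scD_1$, and normalized by $d\omega^0\equiv\omega^1\wedge\omega^2\pmod{\omega^0}$. Because $\scD_1$ and $\scD_2$ are one-dimensional, their integrability is automatic, so no further structural identities appear on the base. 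Two such coframes differ by the four-dimensional group
\[
(\omega^0,\omega^1,\omega^2)\mapsto\bigl(a_1a_2\,\omega^0,\; a_1\omega^1+b_1\omega^0,\; a_2\omega^2+b_2\omega^0\bigr),\qquad a_1a_2\neq 0,
\]
which gives the initial principal bundle $\cP_0\to N$.

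Introduce pseudo-connection 1-forms $\phi^1_1,\phi^2_2,\theta_1,\theta_2$ corresponding to the Lie algebra of this group, and write
\begin{align*}
d\omega^0&=-(\phi^1_1+\phi^2_2)\wedge\omega^0+\omega^1\wedge\omega^2,\\
d\omega^1&=\phi^1_1\wedge\omega^1+\theta_1\wedge\omega^0+T^1,\\
d\omega^2&=\phi^2_2\wedge\omega^2+\theta_2\wedge\omega^0+T^2,
\end{align*}
where $T^1,T^2$ are semi-basic torsion terms. Absorb torsion by modifying the pseudo-connection forms; an elementary linear-algebra count shows that all torsion can be killed, the residual freedom in $(\phi^1_1,\phi^2_2,\theta_1,\theta_2)$ being the addition of a common multiple of $\omega^0$. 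This one-parameter indeterminacy is precisely the shortfall between the four-dimensional initial group and the five-dimensional parabolic $P_{1,2}\subset\mathrm{PGL}(3,\RR)$, and it is removed by a single prolongation: introduce a new coordinate $\theta_0$ on $\cP:=\cP_0\times\RR$, so that $\dim\cP=8=\dim\mathfrak{sl}(3,\RR)$, and arrange the $\mathfrak{sl}(3,\RR)$-valued 1-form $\phi$ in the matrix form stated in the theorem.

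To fix $\theta_0$ canonically, impose the structure equation $d\phi+\phi\wedge\phi=\Phi$ and require $\Phi$ to be semi-basic and of minimal (harmonic) type — concretely, that its only non-zero entries are those indicated in \eqref{eq:2D-path-geom-cartan-conn}. Applying $d$ to the three base equations and using $d^2=0$ (Bianchi) then solves uniquely for the coefficients of $\omega^0$ in each pseudo-connection form; in particular it pins down $\theta_0$ and shows that the $(1,2)$, $(2,3)$ and $(1,3)$ entries of $\Phi$ take exactly the claimed shape, introducing four structure functions $P_1,P_2,Q_1,Q_2$. One then checks that $\phi$ is $P_{1,2}$-equivariant, is pointwise an isomorphism $T\cP\to\fg$, and reproduces fundamental vector fields, which certifies it as a Cartan connection of type $(\mathrm{PGL}(3,\RR),P_{1,2})$. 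The fact that $P_1,Q_1$ are the \emph{fundamental} invariants — i.e.\ primary relative invariants, with $P_2,Q_2$ obtainable from them by coframe derivatives via Bianchi — is the classical Tresse/Cartan statement for scalar second-order ODEs under point equivalence. The main obstacle in the whole scheme is the bookkeeping of the absorption and prolongation so that the curvature lands in the harmonic subspace prescribed by Kostant's theorem for $P_{1,2}\subset\mathrm{PGL}(3,\RR)$; once this algebraic normalization is set up the uniqueness of $\phi$ follows formally.
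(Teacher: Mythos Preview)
The paper does not give a proof of this theorem: it is stated as Cartan's classical result for path geometries on surfaces (equivalently, scalar second-order ODEs under point transformations), with a reference to \cite{Cartan-Proj}, and only the explicit formulas for $P_1,Q_1$ in jet coordinates are recorded afterwards. Your outline of Cartan's equivalence method---adapted coframe, absorption of torsion over the four-dimensional structure group, a single prolongation to reach $\dim\mathfrak{sl}(3,\RR)=8$, and normalization of curvature into the harmonic window---is exactly the standard route and is correct as a sketch. If anything, you might streamline the last paragraph: for this small example the harmonic normalization can be done by hand without invoking Kostant, and the Bianchi identities $dP_1,\,dQ_1$ already exhibit $P_2,Q_2$ as coframe derivatives of $P_1,Q_1$, confirming that the latter are the fundamental invariants.
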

In comparison to  the matrix forms of $\phi$ in \eqref{eq:path-geom-cartan-conn-3D}, we have decided  in \eqref{eq:2D-path-geom-cartan-conn} for all $\omega$'s to have superscript and all $\theta$'s to have subscript.

As was mentioned in Remark \ref{rmk:para-cr-structures-2D-path}, para-CR 3-manifolds correspond to point equivalence classes of scalar 2nd order ODEs. In terms of  coordinates $(t,z,p)$ on $J^1(\R,\R)$ one obtains
\[
P_1=X_F^2(F_{pp})-4X_F(F_{zp})-F_{p}X_F(F_{pp})+4F_{p}F_{zp}-3F_zF_{zpp}+6F_{zz},\quad Q_1=F_{pppp},
\]
where $F$ defines an ODE $z''=F(t,z,z')$ in the point equivalence class, and $X_F=\partial_t+p\partial_{z}+F\partial_{p}.$

\begin{remark}\label{rmk:1-form-defined-on-G-or-Q}
Unlike in \ref{sec:path-geom-defin}, the 1-forms in Theorems \ref{thm:path-geom-cartan-conn}, \ref{thm:paraCR-cartan-conn}, and \ref{thm:2D-path-geome} are defined on the respective principal bundles $\cG$ and $\cP$ rather than the manifold $Q$ or $N.$   Whenever it is clear from the context that one needs to take a section $s\colon Q\to\cG$ and consider $s^*\psi,$ we will not make a distinction between 1-forms defined on the principal bundle or the underlying manifold. 
\end{remark}

A substantial part of the paper will involve path geometries on 3-dimensional manifolds, i.e.  $Q$ in Definition \ref{def:generalized-path-geom} is 5-dimensional. By the discussion in \ref{sec:path-geom-defin} such geometries correspond to point equivalence classes of pairs of 2nd order ODEs. We have the following proposition.
\begin{proposition}\label{prop:3D-path-geom}
In three-dimensional path geometries the fundamental invariants $\bT$ and $\bC$ in Theorem \ref{thm:path-geom-cartan-conn}, as $\mathrm{GL}_2(\RR)$-modules  can be represented as a  binary  quadric and a binary quartic, respectively, given by
\begin{equation}
  \label{eq:quadric-quartic}
  \begin{aligned}
 \bT&= s^*(A_0(\beta^1)^2+2A_1\beta^1\beta^2+A_2(\beta^2)^2)\otimes V\otimes X^{-2},\\
\bC&=s^*(W_0(\beta^1)^4+4W_1(\beta^1)^3(\beta^2)+6W_2(\beta^1)^2(\beta^2)^2+4W_3(\beta^1)(\beta^2)^3+W_4(\beta^2)^4)\otimes V\otimes X^{-1} \\
\end{aligned}
\end{equation}
where $s\colon Q\to\cG$ is a section, $\bT\in \Gamma(\mathrm{Sym}^2(\scV^*)\otimes\bigwedge^2\scV\otimes\scX^{-2}),$ $\bC\in \Gamma(\mathrm{Sym}^4(\scV^*)\otimes\bigwedge^2\scV\otimes\scX^{-1}),$ $X:=\frac{\partial}{\partial s^*\alpha^0}\in\Gamma(\scX),$  $V:=\frac{\partial}{\partial s^*\beta^1}\w\frac{\partial}{\partial s^*\beta^2}\in\Gamma(\bigwedge^2\scV),$ 
\[
\begin{gathered}
A_0=T^2_1,\quad A_1=T^2_2,\quad A_2=-T^1_2,\\
  W_0=C^2_{111},\quad W_1=C^2_{211},\quad W_2=C^2_{221},\quad W_3=C^2_{222},\quad W_4=-C^1_{222},
\end{gathered}
\]
and $(\tfrac{\partial}{\partial s^*\alpha^0},\tfrac{\partial}{\partial s^*\alpha^1},\tfrac{\partial}{\partial s^*\alpha^2},\tfrac{\partial}{\partial s^*\beta^1},\tfrac{\partial}{\partial s^*\beta^2})$ denote the vector fields dual to the coframe $s^*(\alpha^0,\alpha^1,\alpha^2,\beta^1,\beta^2)$ in the Cartan connection \eqref{eq:path-geom-cartan-conn-3D}.
Moreover, $Q$ is equipped with a degenerate conformal structure given by  $[s^*h]\subset\mathrm{Sym^2}(T^*Q)$ where
\begin{equation}
  \label{eq:degenerate-conformal-str}
  h:=\alpha^1\beta^2-\alpha^2\beta^1\in\Gamma(\mathrm{Sym}^2(T^*\cG)).
  \end{equation}
\end{proposition}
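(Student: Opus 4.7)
The plan is to proceed in three stages: establish the representation-theoretic identifications between trace-free tensors on a $2$-plane and binary forms, extract the coefficients of the quadric and quartic by direct substitution from Theorem \ref{thm:path-geom-cartan-conn}, and account for the weight factors $\scX^{-2}$, $\scX^{-1}$ together with the conformal structure by analyzing the $P_{1,2}$-action on \eqref{eq:path-geom-cartan-conn-3D}.

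For the first stage, when $\dim\scV=2$ there are canonical $GL(\scV)$-equivariant isomorphisms
\[
\mathfrak{sl}(\scV)\;\cong\;\mathrm{Sym}^2(\scV^*)\otimes\textstyle\bigwedge^2\scV,\qquad
\{C\in\mathrm{Sym}^3(\scV^*)\otimes\scV:\tr C=0\}\;\cong\;\mathrm{Sym}^4(\scV^*)\otimes\textstyle\bigwedge^2\scV,
\]
realized after a choice of $\omega_0\in\bigwedge^2(\scV^*)$ by $A\mapsto Q_A$ with $Q_A(v)=\omega_0(v,Av)$ and $C\mapsto W_C$ with $W_C(v,v,v,v)=\omega_0(v,C(v,v,v))$, the right-hand $\bigwedge^2\scV$-factor arising by dualizing $\omega_0$. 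By \eqref{eq:fels-invariants-path} the invariants $\bT$ and $\bC$ satisfy exactly the trace-free and symmetry hypotheses required for these identifications. Taking $\omega_0=s^*\beta^1\wedge s^*\beta^2$ and $v=\xi\,\partial_{s^*\beta^1}+\eta\,\partial_{s^*\beta^2}$ and using $T^1_1=-T^2_2$ yields
\[
Q_T(v)\;=\;T^2_1\,\xi^2+2T^2_2\,\xi\eta-T^1_2\,\eta^2,
\]
which, read as a symmetric bilinear form in $\beta^1,\beta^2$, is the claimed binary quadric with coefficients $A_0,A_1,A_2$. An entirely analogous expansion for $\bC$, after using $C^i_{jkl}=C^i_{(jkl)}$ together with the trace identity $C^1_{1jk}=-C^2_{2jk}$ to eliminate the upper-index-$1$ components, produces the binary quartic with coefficients $W_0,\ldots,W_4$ as stated.

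For the weight factors and the conformal structure, I will conjugate the Cartan connection \eqref{eq:path-geom-cartan-conn-3D} by a block-diagonal element of the Levi subgroup of $P_{1,2}$. A routine calculation shows that the $\mathbb{R}^\times$-factor of the Levi scales $\alpha^0$ by a character determining the line bundle $\scX^{-1}$, and that the $GL(\scV)$-factor acts on $\alpha^i$ and $\beta^i$ in the same standard representation. Imposing $P_{1,2}$-equivariance on the curvature components $B^i\equiv T^i_j\,\alpha^0\wedge\alpha^j$ and $\Psi^i_j\equiv C^i_{jkl}\,\alpha^k\wedge\beta^l$ then forces the quadric and quartic to be twisted by $\scX^{-2}$ and $\scX^{-1}$ respectively, so that $\bT$ and $\bC$ descend to the claimed weighted bundles on $Q$. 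The same equivariance calculation shows that the antisymmetric contraction $\epsilon_{ij}\alpha^i\beta^j=\alpha^1\beta^2-\alpha^2\beta^1$ rescales by a single scalar under every Levi element, so $[s^*h]$ is a well-defined conformal class independent of the section $s$, whose kernel visibly contains $\scX=\langle\partial_{s^*\alpha^0}\rangle$. The main difficulty in this last step is the simultaneous bookkeeping of the $\mathbb{R}^\times$-character and the $GL(\scV)$-action so as to confirm that the quadric and quartic transform precisely as sections of $\mathrm{Sym}^2(\scV^*)\otimes\bigwedge^2\scV\otimes\scX^{-2}$ and $\mathrm{Sym}^4(\scV^*)\otimes\bigwedge^2\scV\otimes\scX^{-1}$ under the full $P_{1,2}$-equivariance.
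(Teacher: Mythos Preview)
The paper states this proposition without proof, treating the representation-theoretic identification of trace-free tensors on a $2$-plane with binary forms, and the well-definedness of $[h]$, as standard facts following from Theorem~\ref{thm:path-geom-cartan-conn}. Your argument supplies exactly the details the paper omits, and the approach via the contraction $v\mapsto\omega_0(v,Tv)$ (and its cubic analogue for $\bC$) is the expected one; your coefficient extractions for $A_i$ and $W_i$ are correct.

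One point worth tightening: in the third stage you only conjugate by block-diagonal Levi elements, but well-definedness of $\bT$, $\bC$ and $[h]$ on $Q$ requires equivariance under the full parabolic $P_{1,2}$, including the unipotent radical $P_+$. For $\bT$ and $\bC$ this is automatic from the normality of the Cartan connection in Theorem~\ref{thm:path-geom-cartan-conn} (the harmonic curvature components are genuinely $P_{1,2}$-equivariant, not merely Levi-equivariant), and for $h$ one checks directly that the $P_+$-action on $(\alpha^1,\alpha^2,\beta^1,\beta^2)$ by \eqref{eq:path-geom-cartan-conn-3D} shifts $\beta^i$ only by multiples of $\alpha^0,\alpha^j$, which leaves $\alpha^1\beta^2-\alpha^2\beta^1$ unchanged modulo $\{\alpha^0\}$ and up to scale. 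A sentence to this effect would close the only gap.
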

In 
 \ref{sec:path-geom-aris} we will need the Bianchi identities among the entries of the $\bC$ and $\bT$ given by
\begin{equation}
  \label{eq:W-A-curvature-torsion-Bianchies}
  \begin{aligned}
    \exd W_0 \equiv& 4 W_0 \psi^1_1 + 4 W_1 \psi^2_1,\quad     &&\exd W_1 \equiv W_0 \psi^1_2 + (3  \psi_1^1 +  \psi^2_2)W_1 + 3 W_2 \psi^2_1\\ 
    \exd W_2 \equiv& 2 W_1 \psi^1_2 + (2 \psi^1_1 + 2  \psi^2_2)W_2 + 2 W_3 \psi^2_1,\quad    && \exd W_3 \equiv 3 W_2 \psi^1_2 + (\psi^1_1 + 3 \psi^2_2)W_3 + W_4 \psi^2_1\\
    \exd W_4 \equiv& 4 W_3 \psi^1_2 + 4 W_4 \psi^2_2,\quad     &&\exd A_0 \equiv (4 \psi^0_0 + 3 \psi^1_1 + \psi^2_2)A_0 + 2 A_1 \psi^2_1\\
    \exd A_1 \equiv& A_0 \psi^1_2 + (4 \psi^0_0 + 2 \psi^1_1 + 2 \psi^2_2)A_1 + A_2 \psi^2_1,\quad 
    &&\exd A_2 \equiv 2 A_1 \psi^1_2 + (4 \psi^0_0 + \psi^1_1 + 3  \psi^2_2)A_2.
  \end{aligned}
\end{equation}
modulo $\{\alpha^0,\alpha^1,\alpha^2,\beta^1,\beta^2\}.$

\begin{remark}\label{rmk:wuenschman}
The torsion $\bT$ is also referred to as the  (generalized) Wilczy\'nski or the W\"unschman invariant of the ODE system. It can be interpreted as a vector bundle endomorphism $\bT=(T^i_j)_{i,j=1}^n\colon\scV\to\scV$, as will be explained in Remark \ref{remark:torsion}.
\end{remark}

Using  Theorems \ref{thm:path-geom-cartan-conn} and \ref{thm:2D-path-geome}, it follows that two path geometries $(Q_i,\scX_i,\scV_i)$, $i=1,2$,  are locally equivalent if and only if for their respective Cartan geometries $(\cG_i\to Q_i,\psi_i)$ there is a diffeomorphism $f\colon\cG_1\to\cG_2$ such that $f^*\psi_2=\psi_1.$ If the fundamental invariants vanish, i.e. $\bC=\bT=0,$ then the Cartan curvature is zero and the Cartan connection coincides with the Maurer-Cartan forms on $\mathrm{PGL}(n+2,\RR)$. Consequently, the path geometry is locally equivalent to the  canonical one on $\RR\PP^{n+1}$ whose paths are projective lines, as discussed in \eqref{eq:flat-path-geom}. Similarly, by Theorems \ref{thm:paraCR-cartan-conn} and \ref{thm:2D-path-geome}, one has a Cartan geometric description of local equivalence between two para-CR structures. The vanishing of the fundamental invariant implies that the para-CR structure is locally equivalent to the flat model \eqref{eq:flat-para-CR}.

Using the notion of duality in Definition \ref{def:duality-para-CR} and double fibration \eqref{eq:double-fibration} we make the following definition that will be used in \ref{sec:path-geom-aris}.
\begin{definition}\label{def:co-proj-surf}
  A path geometry on  $M_1$ is called a \emph{projective structure} if its paths coincide with the geodesics of a linear connection $\nabla$ on $M_1$ as unparametrized curves. A 2-dimensional path geometry on $M_1$ is called a \emph{co-projective structure} if its dual path geometry on $M_2$  is a projective structure. 
  \end{definition}
 
  In terms of Cartan curvature \eqref{eq:2D-path-geom-cartan-conn}, projective structures on  $M_1=N\slash\scD_2$ are characterized by  $Q_1=0.$  The vanishing condition $P_1=0$  implies that the path geometry on $M_1$ is co-projective.

\section{Para-CR Lewy curves and their characterization}\label{sec:dancing-construction}    
In this section we define  Lewy curves in a para-CR structure $(N,\scD_1,\scD_2)$ in terms of an associated defining function, extending the 3-dimensional case discussed briefly in \ref{sec:first-look}. In \ref{sec:intrinsic}, we define them via the integral manifolds of $\scD_1$ and $\scD_2$. Moreover, we show that Lewy curves of a para-CR 3-manifold define a path geometry on $N$. In \ref{sec:examples}  the corresponding  ODE systems of such path geometries are described along with some examples. In \ref{sec:characterization-dancing}  we provide necessary and sufficient conditions that characterize path geometries defined by para-CR Lewy curves among 3-dimensional path geometries. These results serve as the starting point for \ref{sec:path-geom-aris}, where Lewy curves are described Cartan geometrically. Finally in \ref{sec:higher-dimensions} we characterize  path geometries defined  by Lewy curves of higher dimensional para-CR structures.
\subsection{Lewy curves via a defining function}\label{sec:LewyDef}   
Let $\Phi\colon M_1\times M_2\to \R$ be a defining function of a $(2n+1)$-dimensional para-CR structure $(N,\scD_1,\scD_2)$ realized as a hypersurface $N\subset M_1\times M_2$ where
\[
N=\{(x,y)\in M_1\times M_2\ |\ \Phi(x,y)=0)\}.
\]
Note that this may require working in  sufficiently small open sets of $N$ as explained in Remark \ref{rmk:N-open-set}. We recall from \ref{sec:paraCR}  that any point $y\in M_2$ corresponds to a  hypersurface $H_y\subset M_1$ and any point $x\in M_1$ corresponds to a  hypersurface $H_x\subset M_2$.  We can make the following definition which is automatically satisfied when $k=2.$
\begin{definition}\label{def:general-position}
  Given a para-CR structure with double fibration \eqref{eq:double-fibration}, a set of  $k$  points $x_1,\ldots,x_k\in M_1$, respectively $y_1,\ldots,y_k\in M_2$, are  in general position if the intersection of the corresponding hypersurfaces, i.e. $\bigcap_{i=1}^k H_{x_i}\subset M_2$, respectively $\bigcap_{i=1}^kH_{y_i}\subset M_1$,  is an $(n+1-k)$-dimensional submanifold. 
\end{definition}
\begin{remark}
Note that the assumption that points $\{x_i\}_{i=1}^k$, or $\{y_i\}_{i=1}^k$, are in the general position is an open condition. The hypersurfaces $\{H_{ x_i}\}_{i=1}^k$ or  $\{H_{y_i}\}_{i=1}^k$ intersect along a $(n+1-k)$-dimensional submanifold if and only if their tangent spaces are in general position at intersection points.  Note that Definition \ref{def:general-position} can be also expressed in terms of transversality of submanifolds, which will be exploited throughout the paper. In general we say that two submanifolds $N_1,N_2$ of a manifold $M$ \emph{intersect transversely} if for any $p\in N_1\cap N_2$,  $T_pN_1 +T_pN_2$ has maximal rank as a linear subspace of $T_pM$. It is a standard result  that the intersection of two transverse submanifolds with the property that $\dim N_1+\dim N_2\geq \dim M$ is a submanifold of dimension $\dim N_1+\dim N_2-\dim M$. 
In   Definition \ref{def:general-position}, for $k=2$, two points $x_1$ and $x_2$ (resp. $y_1$ and $y_2$) are in  general position if and only if $H_{x_1}$ and $H_{x_2}$ (resp. $H_{y_1}$ and $H_{y_2}$) intersect transversely.  For $k>2$, one can proceed iteratively by defining points $\{x_i\}_{i=1}^k$ to be in  general position if $\{x_i\}_{i=1}^{k-1}$ are in  general position, and $\bigcap_{i=1}^{k-1} H_{x_i}$ and $H_{x_k}$ intersect transversely. It turns out that the definition only depends on the set $\{x_i\}_{i=1}^k.$ Lastly, when defining intersection of transverse submanifolds, we only take connected components and restrict ourselves to sufficiently small open sets, as mentioned in Remark \ref{rmk:N-open-set}, where the intersection is smooth.
\end{remark} 

The \emph{Segre family} are submanifolds in $M_1\times M_2$ defined as
\[
Q_{\hat x\hat y}=\{(x,y)\in M_1\times M_2\ |\ \Phi(x,\hat y)=0,\ \Phi(\hat x,y)=0)\}.
\]
This family is parameterized by \emph{non-incident} points $(\hat x,\hat y)\in M_1\times M_2\setminus N,$ as described in Definition \ref{def:incidence}. The Segre family gives rise to a family of curves on $N$  defined as follows.
\begin{definition}\label{def:Lewy}
Let $(N,\scD_1,\scD_2)$ be a $(2n+1)$-dimensional para-CR structure with double fibration \eqref{eq:double-fibration}. For any $(\hat x_i,\hat y_i)\in M_1\times M_2\setminus N$, $i=1,\ldots,n$, such that $(\hat x_i)_{i=1}^n$ and $(\hat y_i)_{i=1}^n$ are in general positions in $M_1$ and $M_2$, the curve
\[
\gamma_{\hat x_1\ldots \hat x_n\hat y_1\ldots\hat y_n}=N\cap Q_{\hat x_1\hat y_1}\cap\ldots\cap Q_{\hat x_n\hat y_n}
\]
is called a \emph{Lewy curve}.
\end{definition}

The fact that $\gamma_{\hat x_1\ldots \hat x_n\hat y_1\ldots\hat y_n}$ is a curve follows immediately from the assumption that $(\hat x_i)_{i=1}^n$ and $(\hat y_i)_{i=1}^n$ are in general positions. Indeed, this assumption implies that the intersections
\begin{equation}\label{eq:L-curves-M1-M2}
  \begin{aligned}
    L_{\hat x_1\ldots\hat x_n}=&H_{\hat x_1}\cap\ldots\cap H_{\hat x_n}=\{y\in M_2\ |\ \Phi(\hat x_1,y)=0,\ldots, \Phi(\hat x_n,y)=0\}\\
L_{\hat y_1\ldots\hat y_n}=&H_{\hat y_1}\cap\ldots\cap H_{\hat y_n}=\{x\in M_1\ |\ \Phi(x,\hat y_1)=0,\ldots, \Phi(x,\hat y_n)=0\}
  \end{aligned}
\end{equation}
are curves. Consequently, $\Gamma_{\hat x_1\ldots\hat x_n}=\pi_2^{-1}(L_{\hat x_1\ldots\hat x_n})$ and $\Gamma_{\hat y_1\ldots\hat y_n}=\pi_1^{-1}(L_{\hat y_1\ldots\hat y_n})$ are $(n+1)$-dimensional submanifolds of $N$. Their intersection is a curve in $N$ provided that they are transverse. This is implied by the condition that $(\hat x_i)_{i=1}^n$ and $(\hat y_i)_{i=1}^n$ can be grouped into non-incident pairs as in Definition \ref{def:Lewy}. In fact, for any pair $(\hat x, \hat y)\in M_1\times M_2$ the hypersurfaces $\pi_1^{-1}(H_{\hat y}),\pi_2^{-1}(H_{\hat x})\subset N$ do not intersect transversely if and only if $(\hat x,\hat y)$ is an incident pair which immediately follows from Proposition \ref{prop:jet-realization-para-cr-structures} and the interpretation of $\Phi$ as a general solution function. If the pair $(\hat x,\hat y)$ is incident then $(\hat x,\hat y)\in N$ and at this point the tangent planes of $\pi_1^{-1}(H_{\hat y})$ and $\pi_2^{-1}(H_{\hat x})$ coincide with~$\scC$.

A priori, we have a family of curves depending on $2n(n+1)$ parameters given by set of $n$ points $x_i$'s in $M_1$ and $y_i$'s in  $M_2.$ In dimension $3$, which corresponds to $n = 1$, the space of paths has dimension $2(n+1)n = 4$, which agrees with that of a 3-dimensional path geometry. In higher  dimensions, in general there are too many paths to define a path geometry. Regarding flat para-CR structures, we have the following.
\begin{proposition}\label{prop:flatcase}
Given a flat para-CR structure $(N,\scD_1,\scD_2),$ its Lewy curves   define a path geometry on $N.$
\end{proposition}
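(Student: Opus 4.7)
My plan is to verify Definition \ref{def:generalized-path-geom} directly from the explicit flat model of Example \ref{exa:flat-para-CR}, where $\Phi(x,y) = z - a_i t^i - b$. In coordinates $(t^i, z, a_i)$ on $N$ (with $b = z - a_i t^i$), the flat para-CR structure is invariant under translations, so it suffices to analyze Lewy curves through the origin $p_0 = (0,0,0) \in N$ and globalize by homogeneity.

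The first step is to parameterize the Lewy curves through $p_0$. By \eqref{eq:L-curves-M1-M2}, such a curve is determined by a pair of affine lines $L \subset M_1$ through $x_0 = 0$ and $P \subset M_2$ through $y_0 = 0$; for generic choices (those transverse to the contact distribution) these can be written as $(t^i(s), z(s)) = s(\beta^i, 1)$ and $(a_i(r), b(r)) = r(\eta_i, 1)$ with direction parameters $(\beta, \eta) \in \RR^{2n}$. The incidence condition $z(s) = a_i(r) t^i(s) + b(r)$ reduces to $s = r(1 + s\,\eta\cdot\beta)$, which by the implicit function theorem determines $r = r(s)$ smoothly with $r(0) = 0$ and $r'(0) = 1$. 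The second step is to compute the tangent of the resulting Lewy curve at $p_0$: differentiating $s \mapsto (s\beta^i,\, s,\, r(s)\eta_i)$ at $s = 0$ yields
\[
\partial_z + \beta^i \partial_{t^i} + \eta_i \partial_{a_i} \in T_{p_0} N.
\]
The map $(\beta,\eta) \mapsto [\partial_z + \beta^i \partial_{t^i} + \eta_i \partial_{a_i}]$ is then a bijection from $\RR^{2n}$ onto the open dense subset $U_{p_0} := \{[v] \in \PP T_{p_0} N : \exd z(v) \neq 0\}$, i.e.\ the locus of directions transverse to the contact distribution at $p_0$.

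Finally, translating the construction to every $p \in N$ yields an open subset $U_p \subset \PP T_p N$ and, by smooth dependence on the parameters, a rank-one distribution $\scX$ on $Q := \bigcup_{p \in N} U_p \subset \PP TN$ tangent to the canonical lifts of Lewy curves. Taking $\scV$ to be the rank-$2n$ vertical distribution of $Q \to N$, the transversality $\scX \cap \scV = 0$ and the integrability of $\scV$ with leaf space $N$ are immediate. The remaining axiom $[\scX, \scV] = TQ$ of Definition \ref{def:generalized-path-geom} follows from the independence of the $2n$ parameters $(\beta, \eta)$ in the tangent formula above: varying them spans $\scV$, while Lie-bracketing with $\scX$ sweeps out the horizontal directions in $TQ$, which is the standard reformulation of the fact that a smooth family of curves filling an open subset of $\PP TN$ transversely to the fibers determines a generalized path geometry. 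The main subtlety to keep in mind---and the reason the statement concerns generalized rather than classical path geometry---is that $U_p$ is a proper open subset of $\PP T_p N$ rather than the whole projectivized tangent space.
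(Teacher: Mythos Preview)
Your proof is correct, but it takes a different route from the paper's. The paper argues more globally and projectively: identifying $M_1\cong\RR\PP^{n+1}$ and $M_2\cong(\RR\PP^{n+1})^*$, it observes that the curves $L_{\hat x_1\ldots\hat x_n}$ and $L_{\hat y_1\ldots\hat y_n}$ are projective lines, that such a line depends only on the codimension-2 projective subspace spanned by the $\hat x_i$ (resp.\ $\hat y_i$), and hence that the family of Lewy curves has exactly $4n$ parameters; it then notes that the pair of lines can be chosen arbitrarily, so the tangent directions fill an open set of $\PP TN$. The paper also forward-references Example~\ref{exa:corr-syst-odes-flat-paraCR}, where the corresponding ODE system is written down explicitly.

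Your approach instead fixes a point, parametrizes the Lewy curves through it by the direction parameters $(\beta,\eta)$ of the two lines, and computes the tangent vector $\partial_z+\beta^i\partial_{t^i}+\eta_i\partial_{a_i}$ to exhibit the bijection with directions transverse to $\scC$; you then globalize by homogeneity and verify Definition~\ref{def:generalized-path-geom} directly. This is essentially a pointwise, coordinate version of what the paper does via dimension counting, and is closer in spirit to the explicit ODE derivation the paper defers to later. A minor quibble: your appeal to ``translations'' is slightly imprecise, since affine translations of $M_1\times M_2$ do not preserve $\Phi=0$; what you really use (and what is certainly available) is transitivity of the automorphism group $\mathrm{PGL}(n+2,\RR)$ on the flat model. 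Likewise, the fact that a Lewy curve through $p_0$ depends only on the pair of lines $(L,P)$ and not on the particular $\hat x_i,\hat y_i$ defining them is exactly the content of the paper's observation about codimension-2 planes; you invoke it implicitly when citing \eqref{eq:L-curves-M1-M2}, which is fine in the flat affine model but worth stating. Both approaches are valid; the paper's buys a cleaner global parameter count, yours buys a direct verification of the path geometry axioms without passing through the ODE system.
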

\begin{proof}
Using the definition of Lewy curves, one can explicitly find the corresponding ODE systems for such path geometries as done in Example \ref{exa:corr-syst-odes-flat-paraCR}. Alternatively, one can proceed as follows.  If $N$ is $(2n+1)$-dimensional then  $M_1$ and $M_2$ are identified as $\RR\PP^{n+1}$ and $(\RR\PP^{n+1})^*$, respectively. Furthermore, all hypersurfaces $H_x$ and $H_y$ are projective hyperplanes described by equation \eqref{eq:flat-para-CR-PDE}, for a point $x\in M_1$, or $y\in M_2$. Consequently, all curves $L_{\hat x_1\ldots\hat x_n}$ and $L_{\hat y_1\ldots\hat y_n}$ are straight lines. Note that two lines $L_{\hat x_1\ldots\hat x_n}$ and $L_{\tilde x_1\ldots\tilde x_n}$ coincide if and only if $(\hat x_i)_{i=1}^n$ and $(\tilde x_i)_{i=1}^n$ lie on the same   2-codimensional projective plane in $M_1$. Similarly, two lines $L_{\hat y_1\ldots\hat y_n}$ and $L_{\tilde y_1\ldots\tilde y_n}$ coincide if and only if $(\hat y_i)_{i=1}^n$ and $(\tilde y_i)_{i=1}^n$ lie on the same   2-codimensional projective plane in $M_2$.  As a result, Lewy curves are effectively parametrized by a pair of  2-codimensional projective planes in $M_1$ and $M_2,$ which implies that they form a  $4n$-parameter family of curves. Furthermore, the two lines $L_{\hat x_1\ldots\hat x_n}$ and $L_{\hat y_1\ldots\hat y_n}$ can be picked arbitrarily in $M_1$ and $M_2$, which implies that the directions of the corresponding Lewy curves in $N$ form an open subset in the projective tangent bundle of $N$. 
\end{proof}

 On $M_1$ the curves $\gamma_{\hat x_1\ldots \hat x_n\hat y_1\ldots\hat y_n}$  are represented by incident pairs $(q,K)$, i.e. $q\in K\subset M_1$, constructed as follows: $K$ is an arbitrary hyperplane in $M_1$ containing a  2-codimensional projective plane containing the points $(\hat x_1,\ldots,\hat x_n)$, and $q\in L_{\hat y_1\ldots\hat y_n}$ is the intersection point of $L_{\hat y_1\ldots\hat y_n}$ and $K$. Figures \ref{pic:2D} and \ref{pic:3D} illustrate the construction of Lewy curves in flat  para-CR structures of dimensions 3 and 5 at as viewed in   $M_1.$ Explicit formula for the corresponding system of ODEs is given in Example \ref{exa:corr-syst-odes-flat-paraCR}.

\begin{center}
\begin{figure}[htbp]
\includegraphics[width=.3\linewidth]{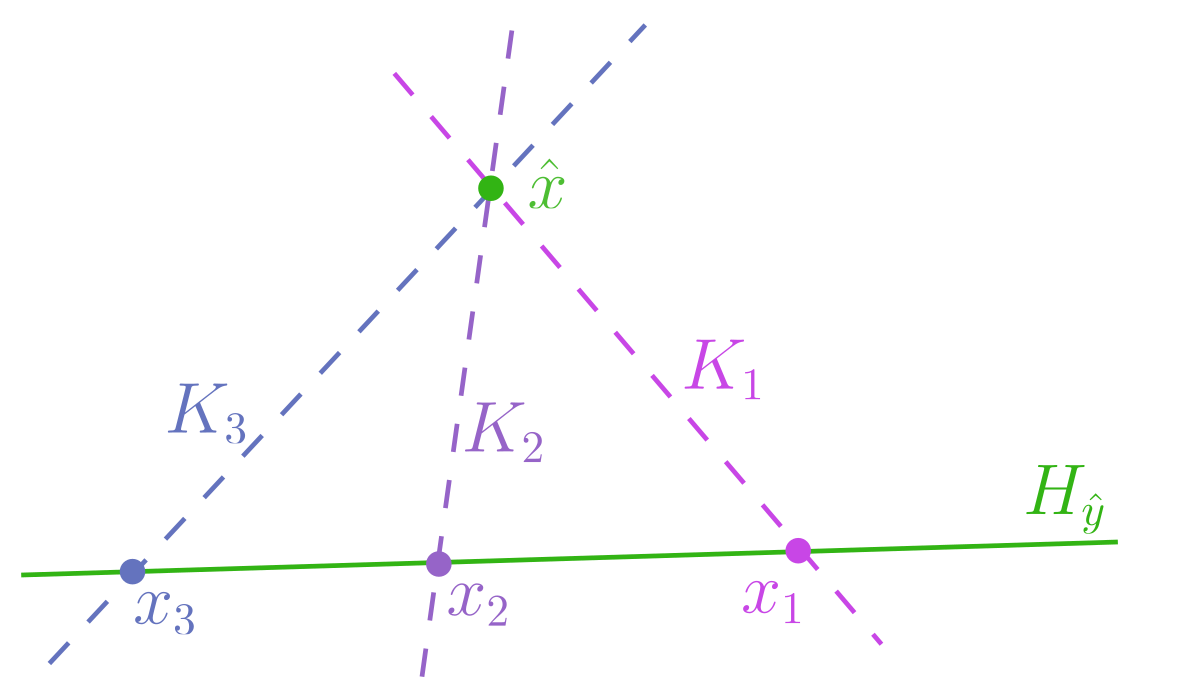}  
\caption{The case $n=1$ corresponds to flat para-CR 3-manifolds $N$ where $\dim M_1=\dim M_2=2$. Every non-incident pair $(\hat x,H_{\hat y})$ where $\hat x\notin H_{\hat y},$ determines a curve on $N$ given by incident pairs such as $(x_i,K_i)$ where $x_i\in H_{\hat y}\subset M_1, \hat x\in K_i,$ and $x_i\in K_i.$ }
\label{pic:2D}
\end{figure}
\end{center}

\begin{center}
\begin{figure}[htbp]
\includegraphics[width=.4\linewidth]{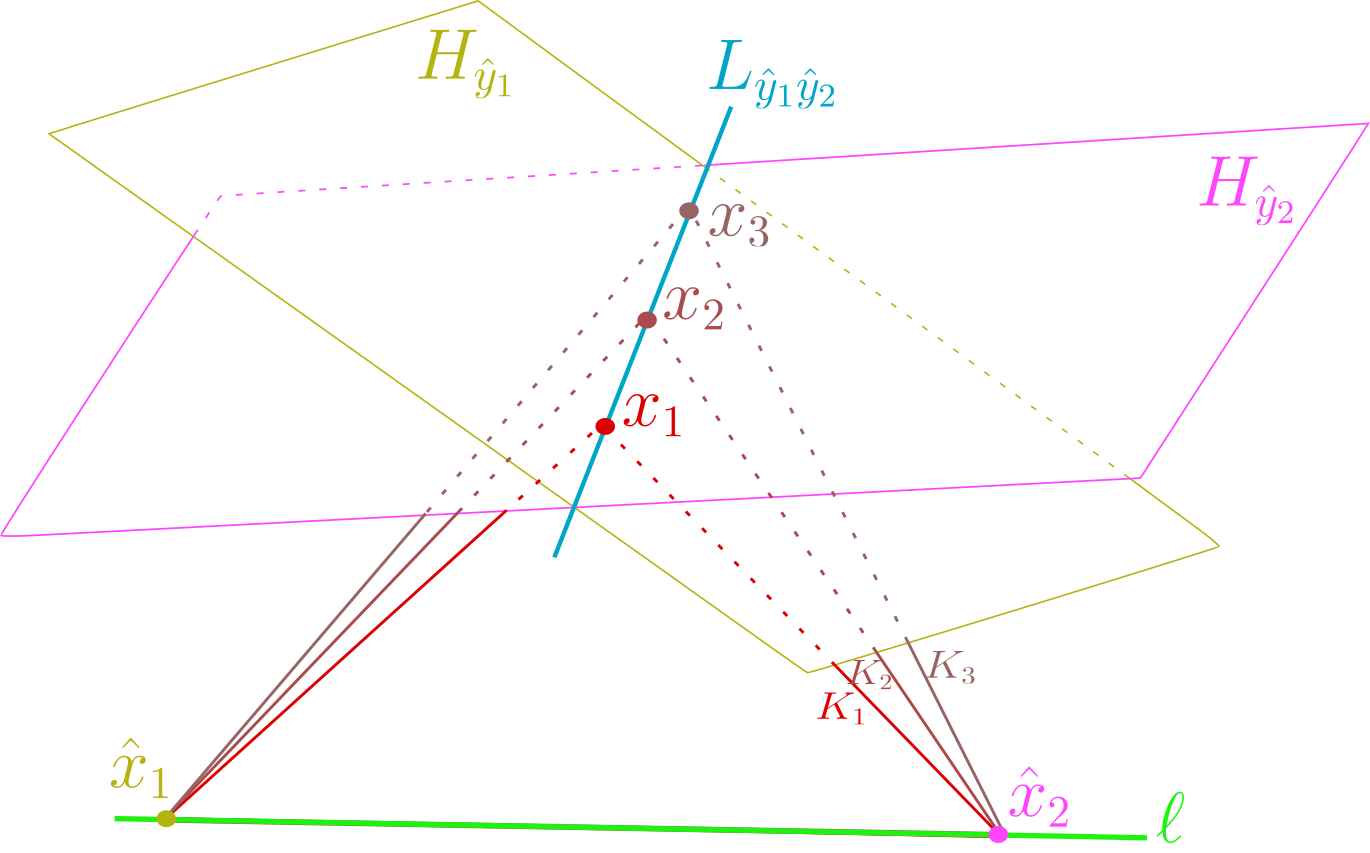}  
\caption{The case $n=2$ corresponds to flat  para-CR 5-manifolds $N$ where $\dim M_1=\dim M_2=3$. Every two non-incident pairs $(\hat x_1,H_{\hat y_1}),(\hat x_2, H_{\hat y_2}),$ determine a curve on $N$ given by  incident pairs such as $(x_i,K_i)$ where $x_i\in H_{\hat y_a}\subset M_1, x_i\in K_i,$ and $\hat x_a\in K_i$ for $a=1,2$ and all $i$'s, where $K_i$ is the plane spanned by $x_i$ and the unique projective line $\ell$ between $\hat x_1$ and $\hat x_2.$ The resulting curve in $N$ is determined by  lines $\ell$ and $L_{\hat y_1\hat y_2}$ in $M_1$, which results in  an 8-parameter family of curves on $N.$}
\label{pic:3D}
\end{figure}
\end{center}
 
\subsection{An intrinsic description}\label{sec:intrinsic}   
\label{sec:an-overview-dancing}
Given   a para-CR structure $(N,\scD_1,\scD_2)$, we now define Lewy curves on $N$ via the  integral manifolds of $(\scD_1,\scD_2)$, hence avoiding the use of a defining function. 
\vskip 1ex

Firstly, we define  hypersurfaces $\Sigma^1_x,\Sigma_y^2\subset N$ as
\begin{equation}\label{eq:def-Sigma1-Sigma2}
  \Sigma^1_x=\bigsqcup_{y\in \pi_2(\pi_1^{-1}(x))} \pi_2^{-1}(y),\qquad   \Sigma^2_y=\bigsqcup_{x\in \pi_1(\pi_2^{-1}(y))} \pi_1^{-1}(x).
\end{equation}
As a result, every $\Sigma_x^1,x\in M_1$ (resp. $\Sigma_y^2,y\in M_2$) contains an integral manifold of  $\scD_2$ (resp. $\scD_1,$) and is foliated by  integral manifolds of $\scD_1$ (resp.  $\scD_2$)
\begin{remark}\label{rmk:Lewy}
In the framework of  \ref{sec:LewyDef}, the hypersurfaces $\Sigma^1_x$ and $\Sigma^2_y$ are expressed as submanifolds of $M_1\times M_2$ by
\[        
\Sigma^1_{x}=\{(\hat x,\hat y)\in N\ |\ \Phi(x,\hat y)=0\},
\quad \Sigma^2_{y}=\{(\hat x,\hat y)\in N\ |\ \Phi(\hat x, y)=0\}.
\]
where $\Phi$ is a  defining function. Figure 3 shows these hypersurfaces in a para-CR 3-manifold, where we interpret $N$ as an open subset of the projectivized tangent bundle of the surface $M_1$. 
Note that by definition all hypersurfaces $\Sigma^1_x$ and $\Sigma^2_y$ satisfy
\begin{equation}\label{eq:T-Sigma-C}
  T_{q}\Sigma^1_x=\scC_q,\qquad  T_{s}\Sigma^2_y=\scC_s,
  \end{equation}
for all $q\in \pi^{-1}_1(x)\subset \Sigma^1_x $ and all $s\in\pi^{-1}_2(y)\subset \Sigma^2_y,$ where $\scC_q$ is the contact distribution at $q\in N.$
\end{remark}

\begin{center}
\begin{figure}[htbp]
\includegraphics[width=.4\linewidth]{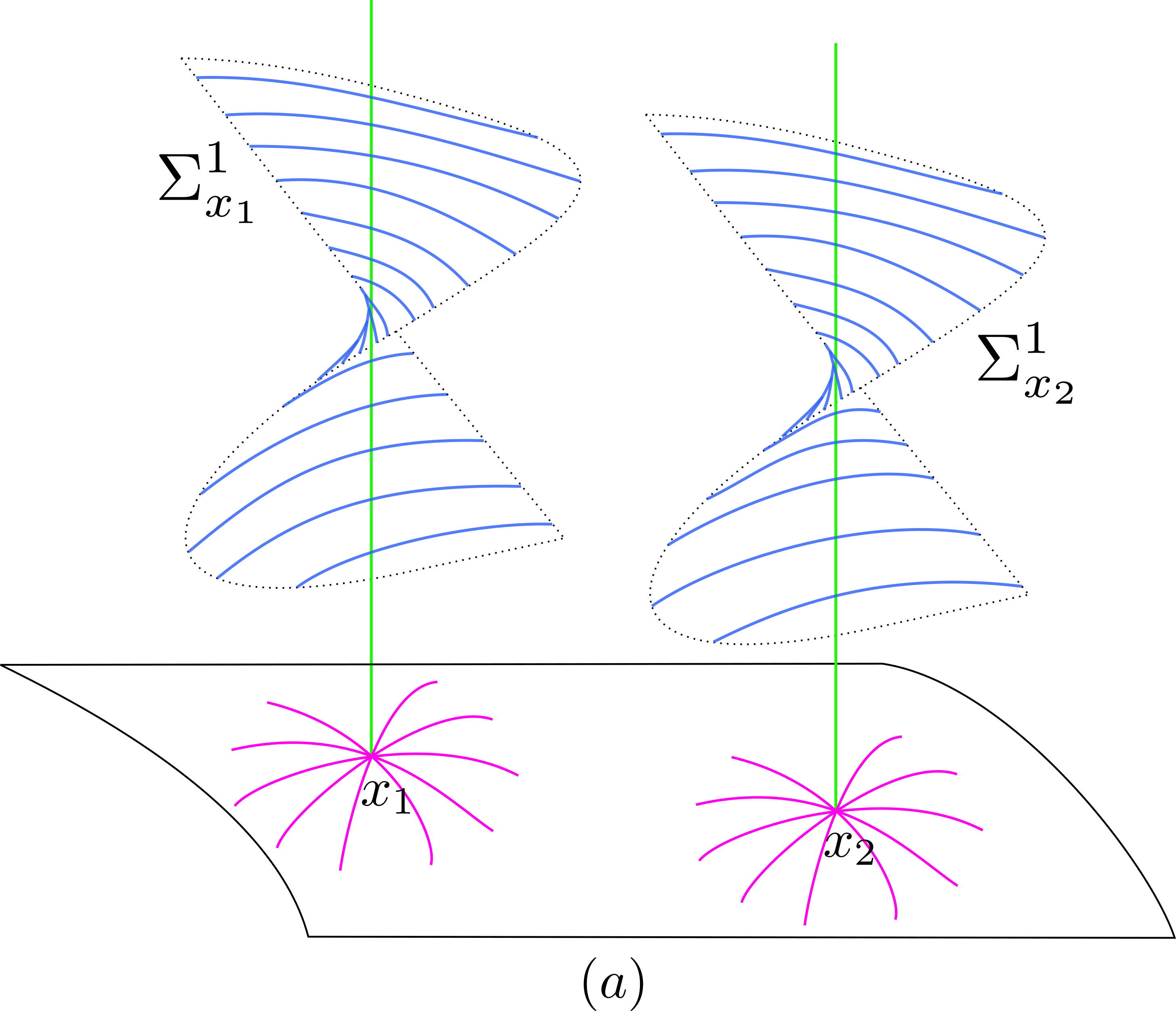} 
\includegraphics[width=.4\linewidth]{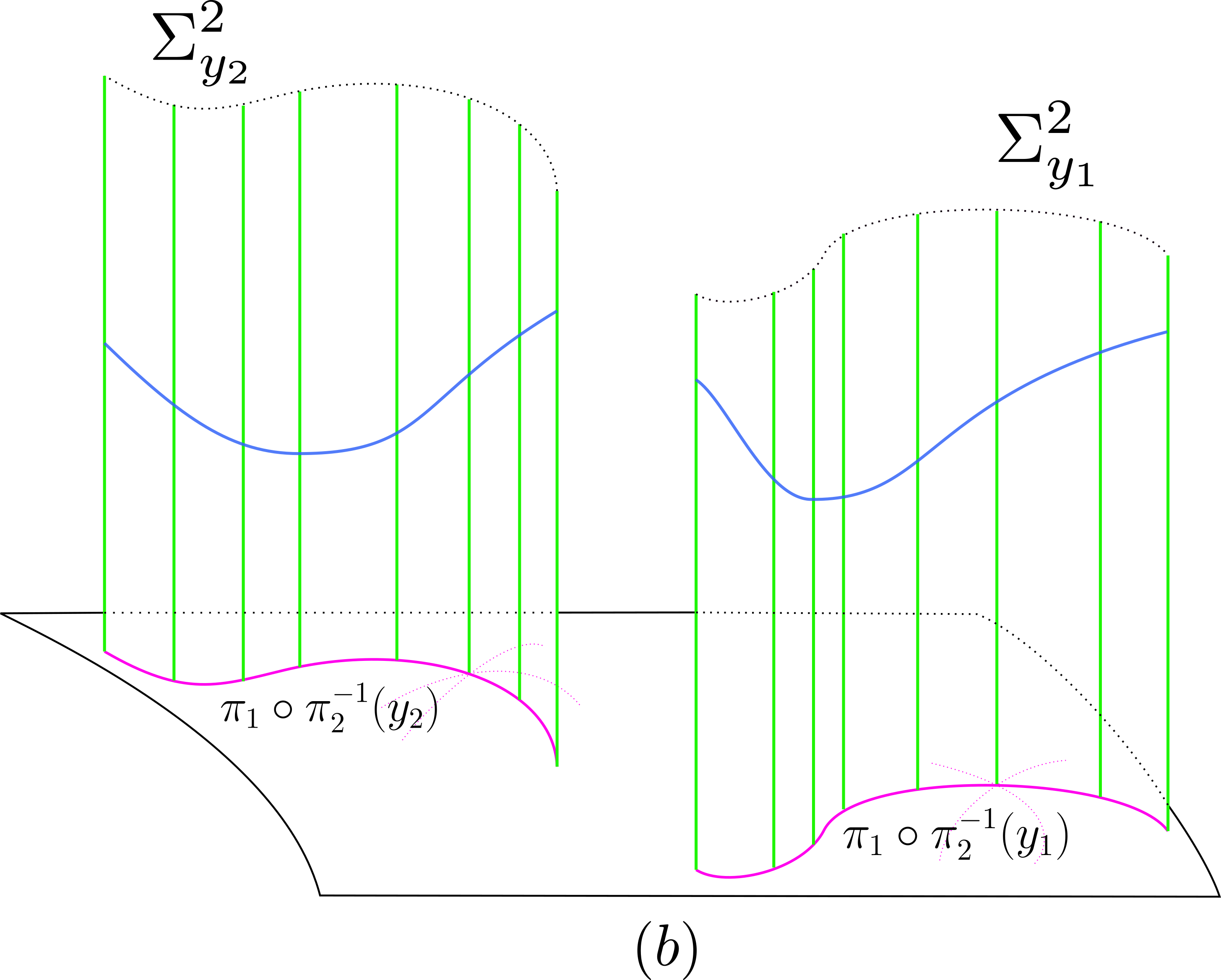}   
\caption{ (a) At $x_1,x_2\in M_1,$ the vertical green lines are open subsets of the fibers $\pi_1^{-1}(x_i)\subset N.$ The blue curves intersecting the fibers $\pi_1^{-1}(x_i)$ are natural lifts of the paths (purple curves) on $M_1$ passing through $x_i.$ The surfaces $\Sigma^1_{x_i}\subset N$  are foliated by the blue curves and  its generating curve is the fiber $\pi_1^{-1}(x_i)$. (b) For $y_1,y_2\in M_2,$ the  purple curves on $M_1$ are the paths $\pi_1\circ\pi^{-1}_{2}(y_i).$ Their lift to $N,$ represented by  blue curves, are  generating curves of  ruled surfaces $\Sigma^2_{y_i}\subset N$ whose ruling lines are vertical green lines which are open subsets of the fibers $\pi_1^{-1}(t)\subset N$ for all $t\in\pi_1\circ\pi^{-1}_{2}(y_i).$}
\end{figure}
\end{center}

Some basic properties of the submanifolds $\Sigma^1_x$ and $\Sigma^2_y$ in $N$ are as follows.

\begin{proposition}\label{prop0}
Let $(N,\scD_1,\scD_2)$ be a para-CR structure of dimension $2n+1$ with double fibration \eqref{eq:double-fibration}. Fix $\hat p\in N$ and denote $\hat x=\pi_1(\hat p)$, $\hat y=\pi_2(\hat p)$. Restricting to a sufficiently small neighborhood $U$ of $\hat p$, one has the following:
\begin{itemize}
\item[(a)] The family of hypersurfaces $\Sigma^1_x$ (resp. $\Sigma^2_y$) satisfying  $\hat p\in\Sigma^1_x$ (resp. $\hat p\in \Sigma^2_y$) is parameterized by points of the fiber $\pi_2^{-1}(\hat y)$ (resp.  $\pi_1^{-1}(\hat x)$) i.e. each such hypersurface corresponds to an  intersection point  $\pi_1^{-1}(x)\cap \pi_2^{-1}(\hat y)$ (resp. $\pi_2^{-1}(y)\cap \pi_1^{-1}(\hat x)$).
\item[(b)] For every point $\hat p\in \Sigma^1_x$ (resp. $\hat p\in\Sigma^2_y$) one has $\scD_1(\hat p)\subset T_{\hat p}\Sigma^1_x$ (resp. $\scD_2(\hat p)\subset T_{\hat p}\Sigma^2_y$).
\item[(c)] Denote by $\mathrm{Gr}^{\scD_i}_{2n}(T_{\hat p}N)$ the set of all hyperplanes in $T_{\hat p}N$ containing  $\scD_i(\hat p)\subset\scC_{\hat p}$, $i=1,2$. There is a neighborhood  $U^{\scD_1}_{\hat p}\subset \mathrm{Gr}_{2n}^{\scD_1}(T_{\hat p}N)$ (resp. $U^{\scD_2}_{\hat p}\subset \mathrm{Gr}_{2n}^{\scD_2}(T_{\hat p}N)$) of $[\scC_{\hat p}]\in \mathrm{Gr}_{2n}^{\scD_i}(T_{\hat p}N)$  such that there is a one-to-one correspondence between points of $U^{\scD_1}_{\hat p}$ (resp. $U^{\scD_2}_{\hat p}$ ) and hypersurfaces $\Sigma^1_x$ (resp. $\Sigma^2_y$) that pass through $\hat p$  where  $x=\pi_1(p)$ and $y=\pi_2(p)$ for all $p\in U$.
\end{itemize}
\end{proposition}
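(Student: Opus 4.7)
The plan is to work locally with the embedding $N\hookrightarrow M_1\times M_2$ provided by Proposition \ref{prop:jet-realization-para-cr-structures} and Remark \ref{rmk:Lewy}, and to exploit a defining function $\Phi$ cutting out $N$. Items (a) and (b) then fall out of the setup, while the main content is (c). I only treat $\Sigma^1_x$ throughout, since the statements for $\Sigma^2_y$ follow by swapping the roles of $\scD_1$ and $\scD_2$.

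For (b), since $\scD_1=\ker(\pi_2)_*$ and $\Sigma^1_x=\pi_2^{-1}(H_x)$ is a union of $\pi_2$-fibers, every vector in $\scD_1$ is automatically tangent to $\Sigma^1_x$. For (a), using Remark \ref{rmk:Lewy}, the condition $\hat p\in\Sigma^1_x$ amounts to $\Phi(x,\hat y)=0$, i.e.\ $x\in H_{\hat y}$. The embedding of $N$ in $M_1\times M_2$ then shows that $\pi_1^{-1}(x)\cap\pi_2^{-1}(\hat y)$ consists of the single point $(x,\hat y)\in N$, and as $x$ varies in a neighborhood of $\hat x$ inside $H_{\hat y}$ these intersection points sweep out a neighborhood of $\hat p$ in $\pi_2^{-1}(\hat y)\cap U$.

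For (c), the key object is the map
\[
F\colon \pi_2^{-1}(\hat y)\cap U\longrightarrow \mathrm{Gr}_{2n}^{\scD_1}(T_{\hat p}N),\qquad F(p')=T_{\hat p}\Sigma^1_{\pi_1(p')},
\]
whose source and target are both $n$-dimensional; by the inverse function theorem it suffices to verify that $F(\hat p)=[\scC_{\hat p}]$ and that $dF_{\hat p}$ is injective. The first assertion holds because $\hat p\in\pi_1^{-1}(\hat x)\subset\Sigma^1_{\hat x}$ forces $\scD_2(\hat p)\subset T_{\hat p}\Sigma^1_{\hat x}$, which together with (b) and a dimension count gives $T_{\hat p}\Sigma^1_{\hat x}=\scD_1(\hat p)\oplus\scD_2(\hat p)=\scC_{\hat p}$. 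For the differential, $\Sigma^1_x$ is cut out in $N$ by the single equation $\Phi(x,y')=0$, so $T_{\hat p}\Sigma^1_x=\ker\bigl(\pi_2^*d_{y'}\Phi(x,\hat y)\bigr)$. Since this 1-form automatically annihilates $\scD_1$, the identification $\mathrm{Ann}(\scD_1)\cong T^*_{\hat y}M_2$ converts $F$ into the map $x\mapsto[d_{y'}\Phi(x,\hat y)]$ from $H_{\hat y}$ to $\PP T^*_{\hat y}M_2$, whose differential at $\hat x$ sends $\xi\in T_{\hat x}H_{\hat y}$ to the class of $\partial^2_{xy'}\Phi(\hat x,\hat y)(\xi,\cdot)$ modulo $d_{y'}\Phi(\hat x,\hat y)$.

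The main obstacle is the final step: injectivity of $dF_{\hat p}$ is equivalent to the mixed Hessian $\partial^2_{xy'}\Phi(\hat x,\hat y)$ inducing a nondegenerate pairing $T_{\hat x}H_{\hat y}\times T_{\hat y}H_{\hat x}\to\RR$ modulo $\langle d_x\Phi,d_{y'}\Phi\rangle$, and this must be recognized as (a nonzero multiple of) the Levi form of the contact distribution $\scC$ under the natural isomorphisms $(\pi_1)_*\colon\scD_2(\hat p)\xrightarrow{\sim}T_{\hat x}H_{\hat y}$ and $(\pi_2)_*\colon\scD_1(\hat p)\xrightarrow{\sim}T_{\hat y}H_{\hat x}$. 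To verify this I would pick a contact form $\theta$ for $\scC$ built from $\Phi$ via the embedding, compute $d\theta(X,Y)$ for $X\in\Gamma(\scD_2)$, $Y\in\Gamma(\scD_1)$ realized as vector fields on $N\subset M_1\times M_2$, and match the result with the Hessian pairing. Once this identification is in hand, the contact (and hence Levi-nondegenerate) hypothesis of Definition \ref{def:para-cr-structures} forces the required injectivity, and the inverse function theorem produces a diffeomorphism from a neighborhood of $\hat p$ in $\pi_2^{-1}(\hat y)$ onto the desired open set $U^{\scD_1}_{\hat p}\subset\mathrm{Gr}_{2n}^{\scD_1}(T_{\hat p}N)$ around $[\scC_{\hat p}]$.
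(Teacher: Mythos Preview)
Your proof is correct. Parts (a) and (b) essentially match the paper's argument. For (c) you take a genuinely different route: you set up the map $F\colon \pi_2^{-1}(\hat y)\cap U\to\mathrm{Gr}_{2n}^{\scD_1}(T_{\hat p}N)$ and apply the inverse function theorem, reducing injectivity of $dF_{\hat p}$ to nondegeneracy of the mixed Hessian $\partial^2_{xy'}\Phi$ on $T_{\hat x}H_{\hat y}\times T_{\hat y}H_{\hat x}$, which you then identify with the Levi form of $\scC$. That identification is the standard one (taking $\theta=\iota^*d_x\Phi$ as a contact form on $N\subset M_1\times M_2$, one has $d\theta(X,Y)=-\partial^2_{xy'}\Phi\bigl((\pi_1)_*X,(\pi_2)_*Y\bigr)$ for $X\in\scD_2$, $Y\in\scD_1$), so your outline closes.

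The paper instead appeals to the local identification $N\cong\PP T^*M_2$ from Remark~\ref{rmk:para-cr-structures-2D-path}: under that identification the fiber $\pi_2^{-1}(\hat y)$ \emph{is} $\PP T^*_{\hat y}M_2\cong\mathrm{Gr}_{2n}^{\scD_1}(T_{\hat p}N)$, and the assignment $q\mapsto (\pi_2)_*^{-1}\bigl((\pi_2)_*(\scC_q)\bigr)(\hat p)=T_{\hat p}\Sigma^1_{\pi_1(q)}$ becomes the tautological map, hence a local diffeomorphism for free. This is shorter, but the Levi-nondegeneracy is hidden inside the identification $N\cong\PP T^*M_2$. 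Your approach makes that dependence explicit and is self-contained at the level of the defining function; the paper's is cleaner once one accepts the jet-space picture.
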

\begin{proof}
Without loss of generality we shall restrict the proof to the family $\Sigma^1_x$. According to Proposition~\ref{prop:jet-realization-para-cr-structures}, $N$ is locally isomorphic to $J^1(\R^n,\R)$, where
\begin{equation}\label{eqD1D2}
\scD_1=\langle\partial_{p_1},\ldots,\partial_{p_n}\rangle,\ \ \  \mathrm{and}\ \ \ \scD_2=\langle\partial_{t^i}+p_i\partial_z+f_{ij}\partial_{p_j}\rangle_{i=1}^n
\end{equation}
Let $U$ be a neighborhood of $\hat p$ where such a local coordinate system is defined with $\hat p$ being the origin.

Part (a) follows directly from the construction. Namely, if $\hat p\in \Sigma^1_x$ then the integral leaf of $\scD_2$ corresponding to $x$, i.e. $\pi^{-1}_1(x)\subset N$, necessarily intersects the integral leaf of $\scD_1$ containing $p$, i.e. $\pi^{-1}_2(\hat y)$. Notice that the intersection point of the two leaves is uniquely determined in $U$.

Part (b) follows from the construction as well, because all $\Sigma^1_x$ are foliated by integral manifolds of $\scD_1$. It follows that $\scD_1(\hat p)\subset T_{\hat p}\Sigma^1_x$.

To show part (c) we only discuss open set $U^{\scD_1}_{\hat p}$. The one-to-one correspondence is given by tangent hyperplanes $T_{\hat p}\Sigma^1_{x}\subset T_{\hat p}N$. Note that it follows from (b) that $\scD_1(\hat p)\subset T_{\hat p}\Sigma^1_{x}$, and hence $T_{\hat p}\Sigma^1_{x}\in \mathrm{Gr}_{2n}^{\scD_1}(T_{\hat p}N)$. Let $q=\pi^{-1}_1(x)\cap\pi^{-1}_2(\hat y)$ and denote the corank 1 preimage of $(\pi_{2})_*(\scC_q)\subset T_{\hat y}M_2$ along the fiber $\pi^{-1}_2(\hat y)$ as  $(\pi_{2})_*^{-1}((\pi_{2})_*(\scC_q))$. By \eqref{eq:def-Sigma1-Sigma2} and property \eqref{eq:T-Sigma-C}, one has 
\begin{equation}\label{eq:T-p-Sigma1x}
  (\pi_{2})_*^{-1}((\pi_{2})_*(\scC_q))(\hat p)=T_{\hat p}\Sigma^1_x.
  \end{equation}
Furthermore, using the definition $M_2=N\slash\scD_1,$ one has the isomorphism
\begin{equation}\label{eq:isom-gr-PTM2}
  \mathrm{Gr}_{2n}^{\scD_1}(T_{\hat p}N)\cong \PP T_{\hat y}^*M_2.
\end{equation}
By definition, at $x=\hat x$ we have $T_{\hat p}\Sigma^1_{\hat x}=\scC_{\hat p}$. By our discussion in  Remark \ref{rmk:para-cr-structures-2D-path} the fiber $\pi^{-1}_2(\hat y)$ can be identified with $\PP T^*_{\hat y}M_2$. As a result, the preimage $(\pi_{2})_*^{-1}((\pi_{2})_*(\scC_q))(\hat p)$ in \eqref{eq:T-p-Sigma1x} for  all $q\in \pi^{-1}_2(\hat y)$ in a sufficiently small neighborhood of $\hat p$ give an open set of $\PP T_{\hat y}^*M_2.$ Using isomorphism \eqref{eq:isom-gr-PTM2}, such hyperplanes define an open set $U^{\scD_1}_{\hat p}$ around $[\scC_{\hat p}]\in\mathrm{Gr}_{2n}^{\scD_1}(T_{\hat p}N).$

\end{proof}

\begin{corollary}\label{cor:tangentplane}
At every point $\hat p\in N$ of a para-CR structure $(N,\scD_1,\scD_2),$ there is a an open set of transverse directions to the contact distribution, $Q_{\hat p}\subset\PP(T_{\hat p} N\backslash\scC_{\hat p}),$ such that for any $v\in Q_{\hat p}$ there is a Lewy curve $\gamma_{x_1\ldots x_ny_1\ldots y_n}$ tangent along $v$. If $\dim N=3$ then the curve is unique and consequently Lewy curves define a  path geometry on $N$.
\end{corollary}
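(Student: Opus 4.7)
The plan is to translate the tangency requirement at $\hat p$ into a finite-dimensional problem about tangent hyperplanes in $T_{x_0}M_1$ and $T_{y_0}M_2$, where $x_0=\pi_1(\hat p)$ and $y_0=\pi_2(\hat p)$, and then solve it using the identification of the fibers $\pi_i^{-1}(\cdot)$ with open subsets of projectivized cotangent spaces (Remark~\ref{rmk:para-cr-structures-2D-path}, or equivalently the dual version of Proposition~\ref{prop0}(c)). Setting $v_1:=(\pi_1)_*v$ and $v_2:=(\pi_2)_*v$, transversality of $v$ to $\scC_{\hat p}$ is equivalent to $v_1\notin T_{x_0}H_{y_0}$ and $v_2\notin T_{y_0}H_{x_0}$. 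Since $\Gamma_{\hat y_1\ldots\hat y_n}=\pi_1^{-1}(L_{\hat y_1\ldots\hat y_n})$ contains $\scD_2(\hat p)=\ker(\pi_1)_*$, one has $T_{\hat p}\Gamma_{\hat y_1\ldots\hat y_n}=(\pi_1)_*^{-1}(T_{x_0}L_{\hat y_1\ldots\hat y_n})$, so $v\in T_{\hat p}\Gamma_{\hat y_1\ldots\hat y_n}$ if and only if $v_1\in T_{x_0}L_{\hat y_1\ldots\hat y_n}=\bigcap_i T_{x_0}H_{\hat y_i}$; arguing symmetrically in the other factor, the task reduces to finding $\hat y_i\in H_{x_0}$ and $\hat x_j\in H_{y_0}$ in general position with $\bigcap_{i=1}^n T_{x_0}H_{\hat y_i}=\langle v_1\rangle$ and $\bigcap_{j=1}^n T_{y_0}H_{\hat x_j}=\langle v_2\rangle$.

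For existence, I would use the identification $\pi_1^{-1}(x_0)\leftrightarrow U^{\scD_2}_{\hat p}\subset\PP T^*_{x_0}M_1$ sending $s$ to the covector with kernel $(\pi_1)_*\scD_1(s)=T_{x_0}H_{\pi_2(s)}$; here $U^{\scD_2}_{\hat p}$ is an open neighborhood of the covector $[T_{x_0}H_{y_0}]$ corresponding to $\hat p$ itself. The hyperplanes containing $v_1$ form a projective subspace $\PP(\Ann v_1)\cong\RR\PP^{n-1}\subset\PP T^*_{x_0}M_1$, which by transversality does \emph{not} pass through $[T_{x_0}H_{y_0}]$; however, when $v_1$ is close to (but not in) $T_{x_0}H_{y_0}$ the subspace $\PP(\Ann v_1)$ lies arbitrarily close to this center and hence meets $U^{\scD_2}_{\hat p}$. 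I would define
\[
Q_{\hat p}:=\{v\in\PP(T_{\hat p}N\setminus\scC_{\hat p})\,:\,\PP(\Ann v_1)\cap U^{\scD_2}_{\hat p}\neq\emptyset,\ \PP(\Ann v_2)\cap U^{\scD_1}_{\hat p}\neq\emptyset\},
\]
which is a non-empty open set of transverse directions. For $v\in Q_{\hat p}$, one then picks $n$ covectors in $\PP(\Ann v_1)\cap U^{\scD_2}_{\hat p}$ whose joint kernel equals $\langle v_1\rangle$ --- a generic open condition on an $(n-1)$-parameter family --- which yields the $\hat y_i$ in general position in $H_{x_0}$, and produces the $\hat x_j$ symmetrically; the non-incidence $(\hat x_i,\hat y_i)\notin N$ is an open condition preserved under small perturbation of these choices.

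In the case $\dim N=3$, so $n=1$, the unique hyperplane in the $2$-dimensional $T_{x_0}M_1$ containing $v_1$ is $\langle v_1\rangle$ itself, and the bijection $\pi_1^{-1}(x_0)\leftrightarrow U^{\scD_2}_{\hat p}$ therefore forces $\hat y$ to be unique --- provided $\langle v_1\rangle\in U^{\scD_2}_{\hat p}$, which is precisely the condition $v\in Q_{\hat p}$ --- and likewise $\hat x$ is unique; hence the Lewy curve tangent to $v$ is unique. To assemble this into a path geometry in the sense of Definition~\ref{def:generalized-path-geom}, I would set $Q:=\bigsqcup_{\hat p\in N}Q_{\hat p}\subset\PP TN$, take $\scV$ to be the vertical bundle of $Q\to N$, and let $\scX$ be the line field spanned by the canonical lift of the unique Lewy curve tangent to each $[v]\in Q$; smoothness of $\scX$ follows from smooth dependence on initial data in the jet picture of Proposition~\ref{prop:jet-realization-para-cr-structures}, and integrability of $\scV$, triviality of $\scX\cap\scV$, and $[\scX,\scV]=TQ$ are standard for a smooth line field on $\PP TN$ transverse to the fibers. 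The main obstacle I anticipate is controlling the relative sizes of the neighborhoods $U^{\scD_j}_{\hat p}$ and the projective subspaces $\PP(\Ann v_j)$ --- a genuine constraint, which is exactly what forces the corollary to restrict to an open subset $Q_{\hat p}$ rather than the full complement of $\scC_{\hat p}$; the verification of general position and non-incidence reduces to openness inside the admissible $(n-1)$-parameter families, and should be routine.
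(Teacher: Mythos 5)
Your proposal is correct and follows essentially the same route as the paper: both arguments realize a transverse direction as the intersection of $2n$ tangent hyperplanes drawn from the two families $\{\Sigma^1_x\}$ and $\{\Sigma^2_y\}$, use the parameterization of those hyperplanes by points of the fibers (Proposition \ref{prop0}(c)) to produce the $x_i,y_i$, obtain openness from the openness of $U^{\scD_i}_{\hat p}$, and deduce uniqueness for $n=1$ from the uniqueness of the hyperplane decomposition. Your version merely pushes the picture down to $T_{x_0}M_1$ and $T_{y_0}M_2$ via $(\pi_i)_*$ and spells out the general-position, non-incidence, and path-geometry bookkeeping that the paper leaves implicit.
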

\begin{proof}
Recall that  $\mathrm{Gr}_{2n}^{\scD_i}(T_{\hat p}N)$, $i=1,2$, denotes the set of all hyperplanes in $T_{\hat p}N$ containing $\scD_i(\hat p)$. It is an elementary observation that any transverse direction $\langle v\rangle\subset TN\backslash\scC$ to the contact distribution can be written as
\begin{equation}\label{eq_direction}
\langle v\rangle=\bigcap_{i=1}^nK^{\scD_1}_{i}\cap  \bigcap_{i=1}^nK^{\scD_2}_{i},
\end{equation}
for some hyperplanes $[K^{\scD_1}_i]\in \mathrm{Gr}_{2n}^{\scD_1}(T_{\hat p}N)$ and $[K^{\scD_2}_i]\in \mathrm{Gr}^{\scD_2}_{2n}(T_{\hat p}N)$. Restricting to  open sets $U^{\scD_i}_{\hat p}$ in part (c) of Proposition \ref{prop0}, it follows that there are points $x_i\in M_1$ and $y_i\in M_2$ such that $K^{\scD_1}_i=T_{\hat p}\Sigma^1_{x_i}$ and $K^{\scD_2}_i=T_{\hat p}\Sigma^2_{y_i}$. One obtains that Lewy curve
\begin{equation}\label{eq:lewy-curve-as-intersection}
   \gamma_{x_1\ldots x_ny_1\ldots y_n}:=\Sigma^1_{x_1}\cap\ldots\cap \Sigma^1_{x_n}\cap \Sigma^2_{y_1}\cap\ldots\cap\Sigma^2_{y_n}
\end{equation}
is tangent along $\langle v\rangle$ at $\hat p$. Since, $U^{\scD_i}_{\hat p}$, $i=1,2$, are open, one obtains an open set of transverse directions to the contact distribution that are tangent to Lewy curves. Note that this open set of transverse directions are  given as intersections \eqref{eq_direction} for hyperplanes $[K_i^{\scD_1}],[K_2^{\scD_2}]$ in a neighborhood of $[\scC_{\hat p}]\in \mathrm{Gr}_{2n}(T_{\hat p}N).$

The 3-dimensional case corresponds to $n=1$, for which the choice of hyperplanes in \eqref{eq_direction} is uniquely determined by $[v]\in\PP (T_{\hat p}N\backslash\scC_{\hat p})$. Consequently, there is an open set of transverse direction to the contact distribution $,Q\subset \PP (T N\backslash\scC),$ that is foliated by Lewy curves, which ultimately defines  a path geometry on $N$.
\end{proof}

\begin{remark}\label{rmk:dancing}
  As was explained in \eqref{eq:gammaxy-first-look},  the space $M_1\times M_2\setminus N$ parameterizes Lewy curves in dimension 3. This space appears in the so-called dancing construction of \cite{Bor} and \cite{D} where they show that when the para-CR 3-manifold is flat then $M_1\times M_2\backslash N$  is equipped with an anti-self-dual conformal structure of the split signature. Later we will show that in the flat case, Lewy curves coincide with chains, and therefore, the induced anti-self-dual conformal structure is the standard metric on the homogeneous space $\mathrm{SL}(3,\RR)\slash\mathrm{GL}(2,\RR)$ (e.g. see \cite[Section 3.2.2]{KM-chains}). Moreover, in this case the set of self-dual null surfaces, also known as   $\alpha$-surfaces, of this metric are parameterized by incident pairs $(x,y)\in N$, and are defined as
  \begin{equation}\label{dancing}
    S_{xy}=\left\{(\hat x,\hat y)\in M_1\times M_2\setminus N\ \vline\ \Phi(\hat x, y)= \Phi(x,\hat y)=0\right\},
  \end{equation}
  which, in a sense, is ``dual'' to  $\gamma_{\hat x\hat y},$ as defined in \eqref{eq:gammaxy-first-look}. Any null curve that lie in an $\alpha$-plane is referred to as a \emph{dancing curve} in \cite{Bor}. In \cite{D} the dancing construction is studied via a twistorial construction, where $N$ becomes the twistor space  of the conformal structure, i.e. its space of $\alpha$-surfaces.

  Notice that definition \eqref{dancing} makes sense for any para-CR structure and does not require flatness. In  \cite{D} it is shown that such surfaces arise from a half-flat conformal structure if and only if the para-CR structure is flat, otherwise they correspond to a half-flat \emph{causal structure} \cite{Omid-Thesis}. Proposition \ref{prop:torsionfree-freestyling} gives path geometric proof of this result. 
\end{remark}

\subsection{Corresponding systems of ODEs}\label{sec:examples} 
In this section we focus on 2nd order ODE systems defined by the path geometries of  para-CR Lewy curves. We begin with the general case of para-CR 3-manifolds and then  consider  flat para-CR structures in all dimensions, which will be used in \ref{sec:higher-dimensions}.

\begin{proposition}\label{prop:ODEs-danc-constr-2d}
  Given a para-CR 3-manifold $(N,\scD_1,\scD_2),$ let
  \begin{equation}\label{eq:Sclar-ODE-dancing}
    z''=F(t,z,z'),
  \end{equation}
  be a representative of  its corresponding point equivalence class of scalar 2nd order ODE, with local identification $N\cong J^1(\RR,\RR)$. Let $(t,z)$ and $(t,z,p)$ be  jet coordinates on  $J^0(\RR,\RR)$ and $J^1(\RR,\RR)$ with $t$ as the independent variable. Then its Lewy curves define a 3-dimensional path geometry on $N$ whose corresponding pair of 2nd order ODE is the point equivalence class of
  \begin{equation}\label{eq:pair-ODEs-dancing}
    z''=F(t,z,z'),\quad p''=G(t,z,p,z',p'),
  \end{equation}
  where  $G\colon J^1(\RR,\RR^2)\to \RR$ is determined by the para-CR structure. 
\end{proposition}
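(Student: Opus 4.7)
The first assertion, that Lewy curves form a 3-dimensional path geometry on $N$, is an immediate consequence of Corollary~\ref{cor:tangentplane} in the case $\dim N=3$. My plan is to identify the corresponding pair of 2nd order ODEs in the jet coordinates $(t,z,p)$ supplied by Proposition~\ref{prop:jet-realization-para-cr-structures}. In these coordinates $\scD_1=\langle\partial_p\rangle$ and $\scD_2=\langle X_F\rangle$ with $X_F=\partial_t+p\partial_z+F\partial_p$; the projections are $\pi_1\colon N\to M_1$ onto the solution space of $z''=F$ and $\pi_2\colon N\to M_2\cong J^0(\RR,\RR)$, $(t,z,p)\mapsto(t,z)$. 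The defining function may be taken as $\Phi(x,y)=z-\phi(t;x)$, with $\phi(\,\cdot\,;x)$ denoting the general solution of $z''=F$ parameterized by $x\in M_1$.

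The key step is to observe that a generic Lewy curve $\gamma_{\hat x\hat y}$, parameterized locally by $t$ as $t\mapsto(t,z(t),p(t))$, has its projection under $\pi_2$ constrained by $\Phi(\hat x,\pi_2(\gamma_{\hat x\hat y}(t)))=0$, which forces $z(t)=\phi(t;\hat x)$. Since $\hat x$ parameterizes a solution of the para-CR ODE, $z(t)$ automatically satisfies
\[
z''(t)=F(t,z(t),z'(t))
\]
with no residual dependence on $p$, $p'$, or $\hat y$. Thus the $z$-component of the ODE system for the Lewy path geometry coincides with $z''=F(t,z,z')$, independent of $p$ and $p'$.

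Invoking the general description of 3-dimensional path geometries via pairs of 2nd order ODEs from \ref{sec:path-geom-defin}, in $(t,z,p)$-coordinates the Lewy path geometry corresponds to a system of the form \eqref{systemODE}: $z''=F_1(t,z,p,z',p')$ and $p''=F_2(t,z,p,z',p')$. The previous step forces $F_1(t,z,p,z',p')=F(t,z,z')$, and setting $G:=F_2$ yields the second equation of \eqref{eq:pair-ODEs-dancing}. The main subtlety, which I expect to be the principal obstacle, is verifying that $G$ depends only on $(t,z,p,z',p')$ rather than also on the Lewy-curve parameters $\hat x,\hat y$. I would address this in either of two equivalent ways. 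First, via the uniqueness already built into Corollary~\ref{cor:tangentplane}: a 3-dimensional Lewy curve is determined by its basepoint together with a transverse direction, hence by the initial 5-tuple $(t_0,z(t_0),p(t_0),z'(t_0),p'(t_0))$, so that $z''(t_0)$ and $p''(t_0)$ are forced to be functions of this data alone. Second, more explicitly, by differentiating the implicit condition $\phi(\hat t;t,z(t),p(t))=\hat z$ (which expresses that the solution through $(t,z(t),p(t))$ passes through $\hat y=(\hat t,\hat z)$) once and twice in $t$, substituting $z''=F$, and solving the resulting equations for the auxiliary parameters $(\hat t,\hat z)$ in terms of $(t,z,p,z',p')$; non-degeneracy of this elimination on a suitable open set is ensured by the transversality assumptions in Definition~\ref{def:Lewy}.
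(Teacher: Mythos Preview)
Your proof is correct and takes essentially the same approach as the paper: both establish that the $(t,z)$-projection of any Lewy curve solves $z''=F$ (the paper via the surfaces $\Sigma^2_y$, you via the defining-function condition directly---these are equivalent by Remark~\ref{rmk:Lewy}), so this equation must appear in the pair. Your justification that $G$ depends only on first-jet data is more explicit than the paper's and in fact anticipates the elimination recipe the paper spells out immediately after the proposition; note, however, that your labeling of $M_1$ and $M_2$ is swapped relative to the paper's text convention (which takes $(t,z)$ as coordinates on $M_1$), though your usage is internally consistent.
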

\begin{proof}
  Recall that every solution curve of the pairs of ODEs that define Lewy curves lies on a surface $\Sigma^2_y$ for some $y\in M_1$. Moreover, by definition, $\Sigma_y^2$ projects to a solution curve of \eqref{eq:Sclar-ODE-dancing} on $M_1$. Indeed,  $\Sigma^2_y\subset N$ locally can be identified with the bundle  $\PP TM_1$ restricted to the curve $\pi_1\circ\pi^{-1}_{2}(y)\subset M_2$, see Figure~3. Consequently, it follows that  the ODE \eqref{eq:Sclar-ODE-dancing} on $M_1$ has to be in the pair of ODEs  for Lewy curves. 
\end{proof}

Analogously, one can replace $M_1$ in Proposition \ref{prop:ODEs-danc-constr-2d} with $M_2=N\slash\scD_1$ and adopt a dual viewpoint as discussed in \ref{sec:paraCR}. Identifying $M_2$ as $J^0(\RR,\RR)$  with the natural coordinates $(\tilde t,\tilde v)$, let $(\tilde t,\tilde z,\tilde p)$ be the induced jet coordinates on $N\cong J^1(\RR,\RR)$. Similar reasoning to Proposition \ref{prop:ODEs-danc-constr-2d} demonstrates that the dual equation \eqref{eq:dual} on $M_2$ shows up in system \eqref{eq:pair-ODEs-dancing} written in coordinates $(\tilde t,\tilde z,\tilde p)$. From this viewpoint, the construction extends the dual ODE to a system as well. However, the two equations (the original one and the dual) do not appear in the system  simultaneously. Each of them only appear for particular choices of coordinate system. Nevertheless, the construction of Lewy curves can be thought of as a geometric pairing of the original ODE and its dual into one system. 
\vskip 1ex
Lewy curves and their corresponding systems of ODEs can be explicitly expressed using  a defining function $\Phi\colon M_1\times M_2\to\R$. Take local coordinates $(t,z)$ on $M_1$ and $(a, b)$ on $M_2.$ As explained in \ref{sec:paraCR}, the coordinates on $M_2$ can be interpreted as constants of integration for the scalar ODE \eqref{eq:sode} in dimension 3 or the PDE system \eqref{eq:pde-system-para-CR}. Fixing $\hat x=(\hat t,\hat z)\in M_1$ and $\hat y=(\hat a,\hat b)\in M_2,$ by Definition  \ref{def:Lewy}, it follows that  Lewy curves $\gamma_{\hat x\hat y}$  are defined as 
\begin{equation}\label{eq:LewyCurv-eqn}
  \gamma_{\hat x\hat y}=\left\{(z,t,a,b)\in M_1\times M_2\ \vline\ \Phi(z,t,a,b)=\Phi(\hat z,\hat t,a,b)=\Phi(z,t,\hat a,\hat b)=0\right\}.
\end{equation}
One is able to obtain a   pair of second order ODEs that correspond to the path geometry of Lewy curves via elimination and differentiation. More precisely, using $\Phi(\hat z,\hat t,a,b)=0$ and its first derivative with respect to $t$ one obtains
\[
  \Phi(\hat z,\hat t,a,b)=0,\quad \Phi_a(\hat z,\hat t,a,b)a'+\Phi_b(\hat z,\hat t,a,b)b'=0,
\]
which can be solved to express $\hat t$ and $\hat z$ in terms of $a$, $b$, $a'$ and $b'$. Similarly, using  $\Phi(z,t,\hat a,\hat b)=0$ and its first derivative with respect to $t$ one has
\[
  \Phi(z,t,\hat a,\hat b)=0,\quad \Phi_t(z,t,\hat a,\hat b)+\Phi_z(\hat z,\hat t,a,b)z'=0,
\]
which gives $\hat a$ and $\hat b$ in terms of $t$, $z$ and $z'$. The equation $\Phi(z,t,a,b)=0$ gives $a$ as a function of $t$, $z$ and $b$, which is then used in the expressions for $\hat t$ and $\hat z$ such that they become functions of  $t$, $z$, $b$, $z'$ and $b'$. Finally, replacing the expression for $a,$ the second derivative of $\Phi(\hat z,\hat t,a,b)=0$ and $\Phi(z,t,\hat a,\hat b)=0$ with respect to $t$ give $b''$ and $z''$ as expressions in terms of $t$, $z$, $b$, $z'$ and $b'$.

As shown in  Proposition \ref{prop:flatcase},  Lewy curves in  higher dimensional flat para-CR structures define path geometries which can  be explicitly described in terms of systems of second order ODEs, using the defining function \eqref{exa:flat-para-CR-function}. 
\begin{example}[Flat para-CR structures] \label{exa:corr-syst-odes-flat-paraCR}
  Continuing Example \ref{exa:flat-para-CR} for $n=1$, with the defining function \eqref{exa:flat-para-CR-function}, and following the recipe following \eqref{eq:LewyCurv-eqn}, one finds that Lewy curves $\gamma_{xy}$ are solutions to
  \begin{equation}\label{eq:lewy-curves-flat-3D-paraCR} 
    z''=0,\qquad b''=-\frac{2(b')^2}{z'-b}.
  \end{equation}
  As explained in Remark \ref{rmk:path-geometry-lewy-remark1}, the pair of ODEs \eqref{eq:lewy-curves-flat-3D-paraCR} also corresponds to chains of the flat para-CR structure.

  To treat higher dimensions,  recall from our discussion in \ref{sec:paraCR}, that in flat para-CR structures one has $M_1\cong\PP\PP^{n+1}.$   Let $(z,t)$ be affine coordinates on $M_1$, where $z\in \R$ and $t=(t^i)_{i=1}^n\in\R^n.$ Similarly, let $(a,b)$, where $a=(a_i)_{i=1}^n\in\R^n$ and $b\in\R,$ be affine coordinates on $M_2\cong(\PP\RR^{n+1})^*,$  identified as the space of projective hyperplanes in $M_1$. By \eqref{exa:flat-para-CR-function}, a defining function is given by
  \[
    \Phi(z,t,a,b)=z-a\cdot t- b,
  \]
  where  $ a\cdot t:=a_it^i$. In order to view  Lewy curves as solutions to a system of ODEs, we follow the recipe above, however we use $z$ as the independent variable.  The formula \eqref{eq:LewyCurv-eqn} needs to be adjusted for higher dimensions. As we discussed in Definition \ref{def:Lewy}, if $N$ is $(2n+1)$-dimensional then we need two sets of $n$-points $x_{1},\ldots,x_n\in M_1$ and $y_1,\ldots,y_n\in M_2$ in general position. To avoid confusion, we use Greek letters for the points in $M_i$ i.e. $(x_{\alpha})_{\alpha=1}^i$ and $(y_{\alpha})_{\alpha=1}^n.$ Using  coordinates $(z,t^i,a_i,b)$ from Proposition \ref{prop:jet-realization-para-cr-structures}, we have $x_{\alpha}=(z_{\alpha},t^1_{\alpha},\ldots,t^n_{\alpha})$ and $y_{\alpha}=(a_{1\alpha},\ldots a_{n\alpha},b_\alpha)_{i=1}^n.$ As a result, the defining equations for  Lewy curves are given by
  \begin{equation}\label{eqq0}
    \Phi(z,t,a,b)=\Phi(\hat z_\alpha,\hat t_{\alpha},a,b)=\Phi(z, t,\hat a_\alpha,\hat b_\alpha)=0, \qquad \alpha=1\ldots,n,
  \end{equation}
  where all $\hat z_\alpha,\hat b_\alpha\in\R$, $\hat t_\alpha=(\hat t_\alpha^1,\ldots,\hat t_\alpha^n),\hat a_\alpha=(\hat a_{\alpha 1},\ldots,\hat a_{\alpha n}) \in\R^n$ are constants. We additionally assume that the set of  $n$-tuples of vectors $(\hat a_\alpha)_{\alpha=1}^n$ and $(\hat t_\alpha)_{\alpha=1}^n$ are linearly independent which by Definition \ref{def:general-position} means they are in general positions, as required in the Definition~\ref{def:Lewy} for Lewy curves. 

  Consider first $\Phi(z,t,\hat a_\alpha,\hat b_\alpha)=0$, $\alpha=1\ldots,n$. For any values of $\hat a_\alpha$ and $\hat b_\alpha$ as above, it is a linear system of equations for $t=(t^1,\ldots,t^n)$ and $z$, whose solutions are affine lines in $M_1$. They can be parameterized as
  \begin{equation}\label{eqq2}
    t=Az+B,
  \end{equation}
  where $A,B\in\R^n$. Similarly, $\Phi(\hat z_\alpha,\hat t_\alpha,a,b)$, $\alpha=1\ldots,n$, defines a family of affine lines in $M_2$, which can be parameterized as
  \begin{equation}\label{eqq3}
    a=Zb+T
  \end{equation}
  where $Z,T\in\R^n$. Observe that each line described by equation \eqref{eqq2} or \eqref{eqq3} corresponds to many possible parameters  $\hat z_\alpha,\hat b_\alpha\in\R$, $\hat t_\alpha,\hat a_\alpha\in\R^n$ in equation \eqref{eqq0}. This freedom is essentially described in the proof of Proposition \ref{prop:flatcase}. 
 
  Now, take $(z,t,a)$ as local coordinates on $N$ and look for Lewy curves parameterized as $z\mapsto (z,a(z),t(z))$.  Form $\Phi=0$ it follows that $b=z-a\cdot t$. We differentiate \eqref{eqq2} and \eqref{eqq3} twice with respect to $z$ in order to eliminate constants $(A,B)$, and $(Z,T)$. Equation \eqref{eqq2} yields $t''=0$. On the other hand, from equation \eqref{eqq3} we get
  \[
    a'=Z(1- a'\cdot t- a\cdot t'),\quad a''=-Z(a''\cdot t+2 a'\cdot t'),\]
  in which we used the equation $t''=0.$ Eliminating $Z$, and solving for $a''$, we get
  \begin{equation}\label{eqq4}
    t''=0, \qquad a''=\frac{-2  a'\cdot t' }{1- a\cdot t'}a'.
  \end{equation}
  as an ODE system for Lewy curves of flat para-CR structures.
\end{example}
\begin{remark}\label{rmk:corr-point-equiv}
  Note that for $n=1$ the ODE pair \eqref{eqq4} differs form the one given in~\eqref{eqq4} when $n=1$. This is due to the fact that in the derivation of former pair of ODEs  $N$ is treated as $\PP TM_1,$ unlike the latter one  where it is viewed as $\PP TM_1$.  In fact, the transformation $(z,t,a)\mapsto (z,t,b)$, where  $b=a^{-1},$ is a  point transformation between these pairs of ODEs.

\end{remark}

\begin{example}\label{exa:danc-constr-2d-SD-path}
  Continuing Example \ref{exa:self-dual-non-flat} and using $t$ as the independent variable in the recipe that follows \eqref{eq:LewyCurv-eqn}, one computes that Lewy curves $\gamma_{xy}$ are solutions to the following pair of ODEs
  \[
    \begin{gathered}    
      z''=\sqrt{z'},\\
      b''=\tfrac{1}{-b'(4z'-(t+b)^2)}\left(-2(b')^2(b'+1)^2(t+b)+4\sqrt{z'}(b')^2 +2(b')^3\sqrt{(t+b)^2b'(b'+1)-4z' b'}\right).
    \end{gathered}
  \]
\end{example}

\subsection{Geometric characterization in dimension 3}\label{sec:characterization-dancing} 
In this section we present a characterization of 3-dimensional path geometries defined by para-CR Lewy curves. We begin by describing the fundamental properties of the surfaces $\Sigma^1_x$ and $\Sigma^2_y$. We shall use the following notion which is a path geometric extension of totally geodesic submanifolds in Riemannian geometry.
\begin{definition}\label{def:totallyaligned}
  Given a family of curves on a manifold $N$, a submanifold $\Sigma\subset N$ is \emph{totally aligned} with  this family if any curve from the family that is tangent to $\Sigma$ at some point   remains on $\Sigma$.
\end{definition}
\begin{proposition}\label{prop1}
  Let $(N,\scD_1,\scD_2)$ be a para-CR 3-manifold with double fibration \eqref{eq:double-fibration}. Then  surfaces $\Sigma^1_x,\Sigma^2_y\subset N$, for any $x\in M_1$ and $y\in M_2,$ are totally aligned with  Lewy curves. 
\end{proposition}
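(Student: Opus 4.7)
By the symmetry between the two families, it suffices to treat $\Sigma^1_x$. Suppose a Lewy curve $\gamma_{\hat x'\hat y'}$ is tangent to $\Sigma^1_x$ at a point $p$; I want to deduce that $\gamma_{\hat x'\hat y'}\subset\Sigma^1_x$ in a neighborhood of $p$. The overall strategy is to show $T_p\Sigma^1_x=T_p\Sigma^1_{\hat x'}$ and then invoke Proposition~\ref{prop0}(c) to conclude $\Sigma^1_x=\Sigma^1_{\hat x'}$ locally, after which the intrinsic description $\gamma_{\hat x'\hat y'}=\Sigma^1_{\hat x'}\cap \Sigma^2_{\hat y'}$ from~\eqref{eq:lewy-curve-as-intersection} (in the $n=1$ case) immediately gives $\gamma_{\hat x'\hat y'}\subset \Sigma^1_{\hat x'}=\Sigma^1_x$.

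To match the tangent planes, both $T_p\Sigma^1_x$ and $T_p\Sigma^1_{\hat x'}$ contain the line $\scD_1(p)$ by Proposition~\ref{prop0}(b). The tangent line $T_p\gamma_{\hat x'\hat y'}$ lies in $T_p\Sigma^1_{\hat x'}$ trivially (since $\gamma_{\hat x'\hat y'}\subset \Sigma^1_{\hat x'}$) and in $T_p\Sigma^1_x$ by the tangency hypothesis. Since $(\hat x',\hat y')$ is non-incident, the construction of Lewy curves recalled after Definition~\ref{def:Lewy} ensures that $\Sigma^1_{\hat x'}$ and $\Sigma^2_{\hat y'}$ meet transversely along $\gamma_{\hat x'\hat y'}$, and, as in the proof of Corollary~\ref{cor:tangentplane}, $T_p\gamma_{\hat x'\hat y'}$ is transverse to $\scC_p$; in particular this line is not contained in $\scD_1(p)\subset \scC_p$. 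Consequently $\scD_1(p)+T_p\gamma_{\hat x'\hat y'}$ is a $2$-plane in $T_pN$ contained in each of the $2$-planes $T_p\Sigma^1_x$ and $T_p\Sigma^1_{\hat x'}$, so the three coincide.

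With the tangent-plane equality in hand, Proposition~\ref{prop0}(c), applied on a sufficiently small neighborhood of $p$ in which the tangent plane $T_p\Sigma^1_x=T_p\Sigma^1_{\hat x'}$ lies in the open set $U^{\scD_1}_p\subset \mathrm{Gr}_2^{\scD_1}(T_pN)$ around $[\scC_p]$, yields $\Sigma^1_x=\Sigma^1_{\hat x'}$ near $p$, completing the argument. The only real care required is the need to localize at each step and to verify that the relevant tangent plane belongs to the neighborhood supplied by Proposition~\ref{prop0}(c); both are purely open conditions and are not serious obstacles. The symmetric argument for the family $\{\Sigma^2_y\}$ uses the same reasoning with the roles of $\scD_1$ and $\scD_2$ interchanged.
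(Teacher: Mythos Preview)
Your argument is correct and is essentially a fleshed-out version of the paper's one-line proof. The paper simply asserts that any Lewy curve tangent to $\Sigma^1_x$ at a point must be of the form $\Sigma^1_x\cap\Sigma^2_y$; your tangent-plane matching via Proposition~\ref{prop0}(b),(c) is exactly the justification behind that assertion, and the localization caveats you flag are consistent with the paper's standing conventions.
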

\begin{proof}
  This is a direct consequence of the constructions as all Lewy curves that are  tangent to $\Sigma^1_x$ at some point are of the form $\Sigma^1_x\cap \Sigma^2_y$, for some $y\in M_2$. Similarly, one can argue for surfaces $\Sigma^2_y.$
\end{proof}
Recall from Remark \ref{rmk:para-cr-structures-2D-path}  that a para-CR 3-manifold $N$ with double fibration \eqref{eq:double-fibration} is equivalent to  a path geometry  on the surface $M_1,$ or alternatively,  a  path geometry on $M_2$ which  is dual to the path geometry defined on $M_1.$ It follows from Proposition \ref{prop1} that the surfaces $\Sigma^1_x$ and $\Sigma^2_y$ for all $x\in M_1$ and $y\in M_2$ are equipped with their own  path geometries. This leads to the following proposition.

\begin{proposition}\label{prop2}
  Let $(N,\scD_1,\scD_2)$ be  a para-CR 3-manifold with double fibration \eqref{eq:double-fibration}.  For any $x \in M_1$ (resp. $y\in M_2$) the restriction of the projection $\pi_1 \colon N \to M_1$ to $\Sigma^1_x$ (resp. $\pi_2 \colon N \to M_2$ to $\Sigma^2_y$)   establishes an equivalence between path geometries on $\Sigma^1_x$ and $M_1$ (resp. $\Sigma^2_y$ and $M_2$). 
\end{proposition}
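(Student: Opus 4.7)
My strategy is to work directly with $\pi_1|_{\Sigma^1_x}$: I will show it is a local diffeomorphism away from the $\scD_2$-leaf $\pi_1^{-1}(x)\subset\Sigma^1_x$, and that it sends the family of Lewy curves on $\Sigma^1_x$ onto the family of paths $\{H_y\}_{y\in M_2}$ of the path geometry on $M_1$; by the $(\scD_1,\scD_2)$-symmetry this also yields the dual statement for $(\pi_2,\Sigma^2_y)$. To analyze $T_p\Sigma^1_x$, note that $\Sigma^1_x=\bigsqcup_{y\in H_x}\pi_2^{-1}(y)$ fibers over the curve $H_x\subset M_2$ via $\pi_2|_{\Sigma^1_x}$ with $\scD_1$-leaf fibers, so $T_p\Sigma^1_x$ necessarily contains $\scD_1|_p$ together with a complementary direction whose $\pi_2$-pushforward is tangent to $H_x$ at $\pi_2(p)$. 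By \eqref{eq:T-Sigma-C}, one has $T_p\Sigma^1_x=\scC_p$ whenever $p\in\pi_1^{-1}(x)$, so $\scD_2|_p\subset T_p\Sigma^1_x$ there. At a point $p\in\Sigma^1_x\setminus\pi_1^{-1}(x)$ I would show that $\scD_2|_p\not\subset T_p\Sigma^1_x$: the pushforward $(\pi_2)_*\scD_2|_p$ is tangent to $H_{\pi_1(p)}=\pi_2(\pi_1^{-1}(\pi_1(p)))$ at $\pi_2(p)$, and since each direction at a point has a unique path through it, $H_{\pi_1(p)}$ and $H_x$ share the same tangent at their intersection point $\pi_2(p)$ only when $\pi_1(p)=x$. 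As $\ker(\pi_1)_*=\scD_2$, the differential $(\pi_1|_{\Sigma^1_x})_*$ then has trivial kernel away from $\pi_1^{-1}(x)$, so $\pi_1|_{\Sigma^1_x}$ is a local diffeomorphism between $\Sigma^1_x\setminus\pi_1^{-1}(x)$ and an open subset of $M_1$ containing $x$.

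Next I would match the curve families. By Proposition~\ref{prop1} together with \eqref{eq:lewy-curve-as-intersection}, every Lewy curve on $\Sigma^1_x$ has the form $\Sigma^1_x\cap\Sigma^2_{y'}$ for some $y'\in M_2$ non-incident with $x$, and \eqref{eq:def-Sigma1-Sigma2} gives $\pi_1(\Sigma^2_{y'})=H_{y'}$; hence $\pi_1$ carries such a Lewy curve into the path $H_{y'}\subset M_1$. Conversely, for $y'$ non-incident with $x$ and sufficiently close to $H_x$, the preimage $\pi_1^{-1}(H_{y'})\cap\Sigma^1_x=\Sigma^1_x\cap\Sigma^2_{y'}$ recovers the Lewy curve. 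Paths $H_y$ through $x$, i.e.\ $y\in H_x$, lift to the $\scD_1$-leaves $\pi_2^{-1}(y)\subset\Sigma^1_x$, for which $\pi_1(\pi_2^{-1}(y))=H_y$ by the very definition of $H_y$. Combined with the local-diffeomorphism property from the first step, this yields the claimed bijection between the curve systems on $\Sigma^1_x$ and $M_1$, giving the desired equivalence of path geometries on sufficiently small open sets, in accordance with Remark~\ref{rmk:N-open-set}.

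The principal subtlety to handle is the 1-dimensional exceptional fiber $\pi_1^{-1}(x)\subset\Sigma^1_x$, along which $\pi_1|_{\Sigma^1_x}$ collapses to the point $x$; this prevents the restriction from being a global diffeomorphism but does not obstruct the local path-geometric equivalence away from this fiber, which is the setting in which all subsequent Cartan-geometric work is carried out.
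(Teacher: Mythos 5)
Your proposal is correct and follows essentially the same route as the paper's (very terse) proof: the key step in both is that a Lewy curve on $\Sigma^1_x$ has the form $\Sigma^1_x\cap\Sigma^2_y$ and that $\pi_1(\Sigma^2_y)=H_y$ is a path of the path geometry on $M_1$. The extra work you do --- verifying that $\pi_1|_{\Sigma^1_x}$ is an immersion away from the exceptional fiber $\pi_1^{-1}(x)$ via the tangency argument for $H_{\pi_1(p)}$ and $H_x$ in the dual path geometry on $M_2$, and matching the curve families in both directions --- is a sound and welcome elaboration of details the paper leaves implicit.
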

\begin{proof}
 Let $x\in M_1$ with corresponding surface $\Sigma^1_x\subset N$. The path geometry on $\Sigma^1_x$ is defined by intersections of $\Sigma^1_x$ with all surfaces $\Sigma^2_y\subset N$, $y\in M_2$. The image of $\Sigma^2_y$ under the projection $\pi_1$ is a path on $M_1$ corresponding to $y\in M_2$. Therefore Lewy curve $\gamma_{xy}$ is projected to a path for the original path geometry on $M_1$. Similarly, one can argue for surfaces  $M_2$ and $\Sigma^2_y.$
\end{proof}

Now we are prepared to state the following characterization of path geometries of Lewy curves.

\begin{theorem}\label{thm1}
  Let $(Q,\scV,\scX)$  be a 3-dimensional path geometry on $N=Q/\scV$. Suppose that there are two 2-parameter families $\{\Sigma^1_x\}_{x\in \tilde M_1}$ and $\{\Sigma^2_y\}_{y\in \tilde M_2}$ of surfaces in $N$ that are totally aligned with this path geometry and have the following properties:
  \begin{itemize}
  \item[(a)]  Each family of surfaces $\{\Sigma^1_x\}_{x\in \tilde M_1}$ and $\{\Sigma^2_y\}_{y\in \tilde M_2}$ has exactly a 1-parameter sub-family passing through any point of $N$. 
  \item[(b)] At every $p\in N,$  tangent planes of each of the 1-parameter family  $\{\Sigma^1_{x}\}$ and $\{\Sigma^1_{y}\}$ that pass through $p$  intersect transversely in  $T_pN$ and have a common line, denoted as $\scD_1$ and  $\scD_2,$ respectively. Any tangent direction to a path in the path geometry  passing through $p\in N$ is uniquely realized as intersection of tangent planes at $p$ of certain $\Sigma^1_x$ and $\Sigma^2_y$.
  \item[(c)] The rank 2 distribution $\scC:=\scD_1\oplus \scD_2\subset T N$ is contact.
  \item[(d)] Path geometries on all surfaces $\Sigma^1_x$, $x\in \tilde M_1$, (resp. $\Sigma^2_y$, $y\in \tilde M_2$), are equivalent via the natural projection $\pi_1\colon N\to N\slash\scD_2$ (resp. $\pi_2\colon N\to N\slash\scD_1$) with the path geometry on $N\slash\scD_2$ (resp. $N\slash\scD_1$) whose paths are the projection of integral curves of $\scD_1$ (resp. $\scD_2$)
  \end{itemize}
  Then $(Q,\scV,\scX)$ is defined by para-CR Lewy curves. Conversely, any 3-dimensional path geometry defined by para-CR Lewy curves satisfies (a)-(d) in sufficiently small open sets of $\PP(TN\backslash \cC)$.
\end{theorem}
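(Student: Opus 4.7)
The plan is to establish both directions of the characterization.

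\emph{Converse direction.} Given a para-CR structure $(N,\scD_1,\scD_2)$ whose Lewy curves define the path geometry, take $\tilde M_i=M_i$ and let $\Sigma^1_x,\Sigma^2_y$ be the surfaces defined in \eqref{eq:def-Sigma1-Sigma2}. Total alignment is Proposition~\ref{prop1}; (a) is Proposition~\ref{prop0}(a); (b) combines Proposition~\ref{prop0}(b) (the inclusion $\scD_i\subset T\Sigma^i$ supplies the common tangent line of the 1-parameter family through $p$), Proposition~\ref{prop0}(c) (the open parameterization of surfaces through $p$ by $U^{\scD_i}_{\hat p}$), and Corollary~\ref{cor:tangentplane} (unique realization of a path direction as an intersection $T_p\Sigma^1_x\cap T_p\Sigma^2_y$); (c) is the definition of a para-CR structure; and (d) is Proposition~\ref{prop2}.

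\emph{Forward direction.} From (a)--(d) I construct a para-CR structure on $N$ and identify its Lewy curves with the given paths. First, define line fields $\scD_1,\scD_2\subset TN$ from (b) as the common tangent lines, smoothness following from smooth dependence of the tangent planes $T_p\Sigma^i_{(\cdot)}$ on parameters. Line fields are automatically integrable, and any 1-dimensional subspace of the 2-dimensional conformal symplectic space $\scC=\scD_1\oplus\scD_2$ (contact by (c)) is Lagrangian, so $(N,\scD_1,\scD_2)$ is a para-CR structure. Next, because $\Sigma^1_x$ belongs to the 1-parameter family through each of its points, (b) yields $\scD_1\subset T\Sigma^1_x$ throughout $\Sigma^1_x$; hence $\Sigma^1_x=\pi_2^{-1}(C^1_x)$ for some curve $C^1_x\subset M_2=N/\scD_1$, and symmetrically $\Sigma^2_y=\pi_1^{-1}(C^2_y)$ for $C^2_y\subset M_1=N/\scD_2$.

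To identify $\tilde M_i\cong M_i$, I invoke (d). A path $\Sigma^1_x\cap\Sigma^2_y$ on $\Sigma^1_x$ projects under $\pi_1$ into $\pi_1(\Sigma^2_y)=C^2_y$; by (d) this projected arc must lie on some $H_{y'}=\pi_1(\pi_2^{-1}(y'))$, and since both $C^2_y$ and $H_{y'}$ are smooth curves sharing an open sub-arc they coincide as curves in $M_1$. The resulting assignment $y\mapsto y'$ is a smooth bijection between the 2-parameter families $\tilde M_2$ and $M_2$; a symmetric argument gives $\tilde M_1\cong M_1$. Under these identifications the abstract $\Sigma^1_x,\Sigma^2_y$ coincide with those from \eqref{eq:def-Sigma1-Sigma2}, and by total alignment together with the tangent-direction realization in (b) the paths of $(Q,\scV,\scX)$ are exactly the intersections $\Sigma^1_x\cap\Sigma^2_y$, i.e., the Lewy curves of $(N,\scD_1,\scD_2)$.

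The main obstacle is the matching step above: condition (d) is the only hypothesis that rules out spurious $\scD_i$-saturated 2-parameter families of surfaces distinct from the Segre-type ones, and one must check that the induced correspondences $y\mapsto y'$ and $x\mapsto x'$ are genuine diffeomorphisms — a point where the smoothness of the tangent-plane families and the open-arc coincidence of $C^i_\bullet$ with $H_\bullet$ must be handled carefully, particularly along the critical $\scD_{3-i}$-leaf in each $\Sigma^i$ where $\pi_j|_{\Sigma^i}$ drops rank.
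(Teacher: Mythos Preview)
Your proposal is correct and follows essentially the same approach as the paper: the converse direction cites the same propositions, and the forward direction first extracts the para-CR structure from (a)--(c), then uses (d) together with the fact that transverse intersections $\Sigma^1_x\cap\Sigma^2_y$ are paths (by total alignment) to force $\pi_j(\Sigma^i_\bullet)$ to coincide with the canonical hypersurfaces $H_\bullet$, yielding the identification $\tilde M_i\cong M_i$. The only expositional difference is ordering: the paper first proves $\Sigma^1_x\cap\Sigma^2_y$ is a path and then invokes (d), whereas you use the phrase ``a path $\Sigma^1_x\cap\Sigma^2_y$'' before justifying it at the end---you should move that justification earlier. Your closing remark about the smoothness of $y\mapsto y'$ and the degeneration of $\pi_j|_{\Sigma^i}$ along the distinguished leaf is a point the paper leaves implicit.
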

\begin{proof} 
  The fact that path geometry of para-CR Lewy curves satisfy conditions (a)-(c) follows from Proposition \ref{prop0} when $n=1,$ setting $\tilde M_2= M_2$ and $\tilde M_1= M_1$. Furthermore, the surfaces $\Sigma^1_x$ and $\Sigma^2_y$ are totally aligned with Lewy curves by Proposition \ref{prop1} and condition (d) is satisfied by Proposition \ref{prop2}. 

  On the other hand, if conditions (a)-(c) are satisfied then the pair $(\scD_1,\scD_2)$ is a para-CR structure on $N$. The claim follows if we prove that surfaces $\Sigma^1_x$ and $\Sigma^2_y$ are necessarily obtained via the construction in \ref{sec:intrinsic} applied to $(\scD_1,\scD_2)$, because then, by the second part of (b), any path of $(Q,\scV,\scX)$ is a Lewy curve of $(\scD_1,\scD_2)$. First we prove that $\Sigma^1_x\cap \Sigma^2_y$ is a path of the original 3-dimensional path geometry on $N$ for all $x$ and $y$, provided that $\Sigma^1_x$ and $\Sigma^2_y$ intersect transversely. Indeed, since both $\Sigma^1_x$ and $\Sigma^2_y$ are totally aligned, any path tangent to $\Sigma^1_x$ and $\Sigma^2_y$ at some point has to stay on both $\Sigma^1_x$ and $\Sigma^2_y$ at all time. Hence, the path is given by $\Sigma^1_x\cap \Sigma^2_y$.

  Furthermore, by the first part of condition (b) any surface $\Sigma^1_x$ is foliated by integral curves of $\scD_1$. Hence, the projection $\gamma:=\pi_2(\Sigma^1_x)$ is a curve in $N/\scD_1.$ Thus, $\gamma$ is also  the projection of the curve $\Sigma^1_x\cap\Sigma^2_y$ for some choice of $y$ (guaranteed by the second part of (b)). We already know that  $\Sigma^1_x\cap\Sigma^2_y$ is a path of the original path geometry on $N$. Hence, by (d), $\gamma$ is necessarily the projection of an integral curve of $\scD_2$, which proves that any $\Sigma^1_x$ is of the form \eqref{eq:def-Sigma1-Sigma2} and consequently we identify $\tilde M_1\cong N/\scD_2$. Similarly, we prove that $\tilde M_2$ can be identified with $N/\scD_1$ and that any surface $\Sigma^2_y$ is of the form \eqref{eq:def-Sigma1-Sigma2} as well. 
\end{proof}

\vskip 1ex
From Theorem \ref{thm1}, dropping  condition (d)  one obtains a  class of path geometries that includes para-CR Lewy curves as a proper sub-class. We have the following.

\begin{proposition}\label{prop:freestyle}
  Any path geometry on a 3-dimensional  manifold $N$ satisfying conditions (a)-(c) of Theorem \ref{thm1} uniquely determines, in sufficiently small open sets, a triple of para-CR structures $(N,\scD_1,\scD_2)$, $(N,\tilde\scD_1,\scD_2)$, $(N,\scD_1, \tilde\scD_2)$ with the same underlying contact structure $\scC$, i.e. $\scC=\scD_1\oplus\scD_2=\tilde\scD_1\oplus\scD_2=\scD_1\oplus\tilde\scD_2$.  
\end{proposition}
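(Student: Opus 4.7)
The plan is to extract $(N,\scD_1,\scD_2)$ directly from (b)--(c) and to construct the remaining two structures $(N,\tilde\scD_1,\scD_2)$ and $(N,\scD_1,\tilde\scD_2)$ by pushing the given path geometry forward along the leaf-space projections $\pi_1\colon N\to\tilde M_1:=N/\scD_2$ and $\pi_2\colon N\to\tilde M_2:=N/\scD_1$, then re-encoding the resulting two-dimensional path geometries as para-CR structures on $N$ via the canonical identification of $N$ with an open subset of $\PP T^*\tilde M_i$.

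For the first structure there is nothing to check: $\scD_1,\scD_2$ are rank one and hence trivially integrable, they span the contact distribution $\scC$ by (c), and any rank-one subbundle of a rank-two contact distribution is automatically Legendrian.

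For $(N,\tilde\scD_1,\scD_2)$ I would argue as follows. Because $\scD_2\subset T\Sigma^2_y$ by the first part of (b), each $\Sigma^2_y$ is saturated by $\scD_2$-leaves and therefore coincides locally with $\pi_1^{-1}(C_y)$ for a curve $C_y:=\pi_1(\Sigma^2_y)\subset\tilde M_1$. As $y$ varies in $\tilde M_2$ the curves $\{C_y\}$ form a two-parameter family on $\tilde M_1$; condition (a) provides a one-parameter subfamily through each point of $\tilde M_1$, while the second part of (b) ensures that the tangent directions of this subfamily sweep an open subset of $\PP T\tilde M_1$. This is a generalized two-dimensional path geometry on $\tilde M_1$, which by Remark~\ref{rmk:para-cr-structures-2D-path} is equivalent to a para-CR 3-structure on an open set of $\PP T^*\tilde M_1$. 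The canonical map $N\to\PP T^*\tilde M_1$, $q\mapsto(\pi_1(q),\Ann((\pi_1)_*(\scC_q)))$, is a local diffeomorphism (by the contact condition on $\scC$, which forces $(\pi_1)_*(\scC_q)$ to vary as $q$ moves along a $\scD_2$-fiber), uses only $\pi_1$ and $\scC$, pulls the canonical contact of $\PP T^*\tilde M_1$ back to $\scC$, and pulls the vertical Legendrian direction back to $\scD_2$. The horizontal Legendrian direction of the new para-CR therefore pulls back to a line field $\tilde\scD_1\subset\scC$ transverse to $\scD_2$ and tangent to the canonical lifts of the curves $C_y$, yielding $(N,\tilde\scD_1,\scD_2)$ with $\scC=\tilde\scD_1\oplus\scD_2$.

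The third structure $(N,\scD_1,\tilde\scD_2)$ is produced by the symmetric construction using $\pi_2$ and the family $\{\Sigma^1_x\}$, and uniqueness of the triple is immediate since every step in the construction is canonical. The main technical hurdle I anticipate is confirming that the pushed-forward families $\{C_y\}$ and $\{\pi_2(\Sigma^1_x)\}$ really define generalized two-dimensional path geometries, i.e.\ that their tangent directions cover open subsets of the relevant projectivized tangent bundles; this reduces to transversality arguments that leverage the openness built into (a) and the second part of (b).
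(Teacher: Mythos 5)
Your proposal is correct and follows essentially the same route as the paper: extract $(N,\scD_1,\scD_2)$ from (b)--(c), then project the $\scD_2$-saturated family $\{\Sigma^2_y\}$ (resp.\ the $\scD_1$-saturated family $\{\Sigma^1_x\}$) to the leaf space $N/\scD_2$ (resp.\ $N/\scD_1$) to obtain a supplementary two-dimensional path geometry whose canonical lift back to $N\cong\PP T^*(N/\scD_i)$ supplies $\tilde\scD_1$ (resp.\ $\tilde\scD_2$) sharing the contact distribution $\scC$. Your extra detail on why $q\mapsto(\pi_1(q),\Ann((\pi_1)_*\scC_q))$ is a local diffeomorphism identifying $\scC$ and $\scD_2$ with the canonical contact and vertical directions is a welcome elaboration of a step the paper only asserts; the one imprecision is that the injectivity of $y\mapsto T_{\bar x}C_y$ (hence the openness of the swept directions) comes from the transversality clause of (b) rather than its second part, but the needed fact does follow from (b).
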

\begin{proof}
  By condition (c), the path geometry defines a para-CR geometry $(N,\scD_1, \scD_2)$. Furthermore, $N$ is equipped with two additional families of surfaces $\{\Sigma^1_x\}_{x\in \tilde M_1}$ and $\{\Sigma^2_y\}_{y\in \tilde M_2}$. In general $\tilde M_1$ and $\tilde M_2$ cannot be identified with $M_1=N/\scD_2$ and $M_2=N/\scD_1$, respectively. However, by conditions (a) and (b), for any $x\in \tilde M_1$, $\Sigma^1_x$ is foliated by integral curves of $\scD_2$, and for any $y\in \tilde M_2$, $\Sigma^2_y$ is foliated by integral curves of $\scD_1$. It follows that $\pi_2(\Sigma^1_x)$ and $\pi_1(\Sigma^2_y)$ are well-defined curves in the quotient spaces $M_1=N/\scD_2$ and $M_2=N/\scD_2,$ respectively. The 2-parameter family of curves $\{\pi_2(\Sigma^1_x)\}_{x\in \tilde M_1}$ defines a supplementary path geometry on $M_2$ and the family $\{\pi_1(\Sigma^2_y)\}_{y\in \tilde M_2}$ defines a supplementary path geometry on $M_1$. Note that in this regard $\tilde M_2$ and $\tilde M_1$ become the  space of paths for the two additional path geometries on $M_1$ and $M_2$, respectively. Finally, the two path geometries give rise to para-CR structures $(N,\tilde\scD_1,\scD_2)$ and $(N,\scD_1, \tilde\scD_2)$, respectively. 

  To show the last part of the proposition, it is sufficient to notice that locally $N$ can be identified as $\PP TM_1,$ where $M_1=N\slash\scD_2$, with the canonical contact distribution $\scC$, which from this perspective splits as $\scC=\scD_1\oplus\scD_2$ or $\scC=\tilde\scD_1\oplus\scD_2$. Similarly, $N$ can be identified as  $\PP T(N\slash\scD_1)$, and then the canonical contact distribution splits as $\scC=\scD_2\oplus\scD_1$ or $\scC=\tilde\scD_2\oplus\scD_1$. 
\end{proof}

Note that Theorem \ref{thm1} describes Lewy curves on the 3-dimensional manifold $N$. However, for the corresponding path geometry it is more natural to work on the 5-dimensional manifold $Q\subset \PP(TN\backslash\cC)$. The following result provides an alternative viewpoint to Theorem \ref{thm1}.
\begin{theorem}\label{thm2}
  Let $(Q,\scV,\scX)$ be a 3-dimensional  path geometry on $N=Q/\scV$. Then $(Q,\scV,\scX)$ satisfies parts (a), (b), (c) of Theorem \ref{thm1} if and only if the vertical bundle $\scV$ has a splitting $\scV=\scV_1\oplus \scV_2$ with the following properties:
  \begin{itemize}
  \item[(a)] The rank-3 distributions
    \[
      \scB_1= [\scX,\scV_1],\qquad \scB_2=[\scX,\scV_2]
    \]
    are integrable.
  \item[(b)] $\scB_1$ and $\scB_2$ have rank-2 sub-distributions $\tilde\scK_1$ and $\tilde\scK_2$ containing $\scV_1$ and $\scV_2$, respectively, such that rank 3 distributions 
    \[\scK_1=\tilde \scK_1\oplus\scV_2 \qquad \scK_2=\tilde \scK_2\oplus\scV_1\]
    are integrable.
  \item[(c)] The projections of $\scK_1$ and $\scK_2$ to $N$ span a contact distribution.
  \end{itemize}
  Moreover, the path geometry satisfies part (d) of Theorem \ref{thm1}, i.e. it is defined by Lewy curves of a para-CR structure, when:
  \begin{itemize}
  \item[(d)] The three para-CR geometries defined in Proposition \ref{prop:freestyle} coincide.
  \end{itemize} 
\end{theorem}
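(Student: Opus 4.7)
The plan is to build an explicit dictionary between the Theorem~\ref{thm1} data on $N$ (the two 2-parameter families of totally aligned surfaces) and the Theorem~\ref{thm2} data on $Q=\PP TN$ (the splitting $\scV=\scV_1\oplus\scV_2$ together with its derived distributions), and then check that (a)--(c) and (d) correspond on each side.

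\textbf{From Theorem~\ref{thm1} to Theorem~\ref{thm2}.} Lift each surface $\Sigma^i_x\subset N$ to its projectivized tangent bundle $\hat\Sigma^i_x:=\PP T\Sigma^i_x\subset Q$; these are 3-dimensional submanifolds which, by (a)--(b) of Theorem~\ref{thm1}, foliate an open set of $Q$ with a unique leaf from each family through a generic point. Set $\scB_i:=T\hat\Sigma^i$ and $\scV_i:=\scV\cap T\hat\Sigma^i$. Total alignment of $\Sigma^i_x$ gives $\scX\subset\scB_i$, so integrability and the fact that the induced path geometry on $\Sigma^i_x$ is non-degenerate (Proposition~\ref{prop1}) force $\scB_i=[\scX,\scV_i]$; the transversality clause in (b) of Theorem~\ref{thm1} gives $\scV=\scV_1\oplus\scV_2$, proving (a) of Theorem~\ref{thm2}. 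Next let $\scK_i:=\pi_*^{-1}(\scD_i)\subset TQ$, the $\pi$-preimage of the common-line distribution $\scD_i$ from (b) of Theorem~\ref{thm1}; it is rank 3 and integrable, with leaves equal to the $\pi$-preimages of integral curves of $\scD_i$. Setting $\tilde\scK_i:=\scK_i\cap\scB_i$, the inclusion $\scD_i\subset T\Sigma^i_x$ together with a Grassmann rank count yields $\rk\,\tilde\scK_i=2$ with $\scV_i\subset\tilde\scK_i$, and $\scV_j\not\subset\scB_i$ then gives $\scK_i=\tilde\scK_i\oplus\scV_j$, proving (b). Finally $\pi_*(\scK_i)=\scD_i$ and the contactness of $\scC=\scD_1\oplus\scD_2$ from (c) of Theorem~\ref{thm1} yield (c) of Theorem~\ref{thm2}.

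\textbf{From Theorem~\ref{thm2} to Theorem~\ref{thm1}.} Integrability of each $\scB_i$ foliates $Q$ by 3-dimensional leaves $L^i_z$ over a 2-parameter leaf space. Since the vertical part of $\scB_i=[\scX,\scV_i]$ is the rank-1 line $\scV_i$, each $\Sigma^i_z:=\pi(L^i_z)$ is a 2-dimensional submanifold of $N$, and $\scX\subset\scB_i$ makes $\Sigma^i_z$ totally aligned with the path geometry. The fiber intersection $L^i_z\cap\pi^{-1}(q)=\PP T_q\Sigma^i_z$ is a projective line, and these lines foliate $\pi^{-1}(q)$ as the integral curves of $\scV_i$, proving (a) of Theorem~\ref{thm1}. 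Define $\scD_i:=\pi_*(\scK_i)$; since $\scV\subset\scK_i$ this is a rank-1 distribution on $N$, and the inclusion $\tilde\scK_i\subset\scK_i\cap\scB_i$ forces $\scD_i\subset T_q\Sigma^i_z$ at every incident $q$, identifying $\scD_i$ as the common line in (b) of Theorem~\ref{thm1}. Transversality of the two tangent planes at $q$, and the clause that every transverse direction $[v]\in T_qN\setminus\scC_q$ is realized uniquely as $T_q\Sigma^1_x\cap T_q\Sigma^2_y$, both follow from the uniqueness of the leaf of $\scB_i$ through $(q,[v])$ combined with the multi-contact identity $\scB_1+\scB_2=TQ$ implied by $[\scX,\scV]=TQ$. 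Part (c) of Theorem~\ref{thm1} is immediate from (c) of Theorem~\ref{thm2}.

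\textbf{Equivalence of (d).} By Proposition~\ref{prop:freestyle} the supplementary para-CR structures $(N,\tilde\scD_1,\scD_2)$ and $(N,\scD_1,\tilde\scD_2)$ arise from the path geometries on $M_1=N/\scD_2$ and $M_2=N/\scD_1$ whose paths are the projections $\pi_2(\Sigma^1_x)$ and $\pi_1(\Sigma^2_y)$, while the original structure $(N,\scD_1,\scD_2)$ corresponds to the path geometries whose paths are $\pi_2$-projections of $\scD_1$-leaves and $\pi_1$-projections of $\scD_2$-leaves. The three para-CR structures coincide precisely when these two pairs of path geometries on $M_1$ and $M_2$ agree, which is exactly condition (d) of Theorem~\ref{thm1}. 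The main obstacle in the argument is the converse direction: one must show, at the level of rank counts inside the 2-dimensional fibers of $\pi\colon Q\to N$, that $\pi_*(\scK_i)$ genuinely realizes the common line of the 1-parameter family of tangent planes through $q$, and that the contact hypothesis (c) of Theorem~\ref{thm2} forces both the transversality of these planes and the ``unique intersection'' clause in (b) of Theorem~\ref{thm1}.
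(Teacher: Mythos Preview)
Your proposal is correct and follows essentially the same approach as the paper: both directions are proved by the same dictionary, sending a surface $\Sigma^i_x\subset N$ to its lift $\PP T\Sigma^i_x\subset Q$ (whose tangent bundle is $\scB_i$), defining $\scV_i$ as the vertical part of $\scB_i$, and taking $\scK_i=\pi_*^{-1}(\scD_i)$ with $\tilde\scK_i=\scK_i\cap\scB_i$; total alignment on $N$ corresponds exactly to integrability of $\scB_i$ on $Q$, and part (d) is simply unpacked via Proposition~\ref{prop:freestyle}. The only minor slip is your citation of Proposition~\ref{prop1} for the non-degeneracy of the induced 2-dimensional path geometry on $\Sigma^i_x$ (needed for $[\scX,\scV_i]$ to have rank~3): that proposition only asserts total alignment, and the rank statement instead follows from the transversality clause in (b) of Theorem~\ref{thm1}.
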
  
\begin{proof}
Part (d) above is exactly part (d) in Theorem \ref{thm1}. 
Thus, we only need to discuss parts (a)-(c). Let us show parts (a)-(c)  in Theorem \ref{thm2}   are equivalent to those in Theorem \ref{thm1}. Let $\tilde M_1$ and $\tilde M_2$ be leaf spaces  $Q\slash \scB_1$ and $Q\slash\scB_2$. Define surfaces $\Sigma^1_x\subset N$, for $x\in\tilde M_1$, and $\Sigma^2_y\subset N$, for $y\in\tilde M_2,$ as  projections to $N$ of the corresponding leaves of $\scB_1$ and $\scB_2$. These are well-defined surfaces, since $\scV_i$ are vertical with respect to projection $\varpi\colon Q\to N$, while $\scX$ and $[\scV_i,\scX]$ are horizontal. The surfaces $\Sigma^1_x$ and $\Sigma^2_y$ that pass through a fixed point $p\in N$ are parameterized by 1-dimensional fibers of the quotient bundles $\scV/\scV_i$, $i=1,2$, respectively. Moreover, clearly, $\varpi_*(\scX)$ is uniquely given as the intersection of $\varpi_*(\scB_1)$ and $\varpi_*(\scB_2)$, because of $\scX=\scB_1\cap\scB_2$, proving that all paths on $N$ are obtained as intersections of $\Sigma^1_x$ and $\Sigma^2_y$, for appropriate $x\in\tilde M_1$ and $y\in \tilde M_2$. Additionally, by (b), distributions $\scK_1$ and $\scK_2$ project to well-defined line fields $\scD_1$ and $\scD_2$ on $N$, which are common tangent lines of tangent bundles to $\Sigma^1_x$ and $\Sigma^2_y$, respectively, and span a contact distribution due to (c).

  The only remaining key point to note here is that condition (a) implies that $\Sigma^1_x$ and $\Sigma^2_y$ are totally aligned surfaces in $N$. This follows from the fact that the paths of the 3-dimensional path geometry are defined as projections of integral curves of $\scX$ to $N$. The paths emerging in directions tangent to $\Sigma^1_x$ and $\Sigma^2_y$ are projections of $\scX$ in the appropriate leaves of $\scB_1$ and $\scB_2$. Since $\scB_1$ and $\scB_2$ are integrable, the paths that are tangent to the surfaces at a point stay in the surfaces.

Now we show that conditions (a)-(c) in Theorem \ref{thm1} imply conditions (a)-(c) in Theorem \ref{thm2}. We note that by (b) in Theorem \ref{thm1} any direction tangent to a path in $N$ is uniquely realized as an intersection of tangent planes to the appropriate surfaces $\Sigma^1_x$ and $\Sigma^2_y$. Thus, there is a natural splitting $\scV=\scV_1\oplus\scV_2$ along integral curve of $\scX$ being a lift of a path in $N$. Indeed, points in the integral lines of $\scV_i$, $i=1,2$, parameterize directions tangent to $\Sigma^1_x$ and $\Sigma^2_y$, respectively. Because the surfaces $\Sigma^1_x$ and $\Sigma^2_y$ are totally aligned, composing the flows of $\scX$ and $\scV_i$ necessarily projects to the corresponding surfaces. This proves that distributions $\scB_i$  defined in (a) are integrable. Then, (b) and (c) are automatically satisfied for $\scK_i=\varpi^{-1}_*\scD_i$, and $\tilde \scK_i=\scK_i\cap\scB_i$.

\end{proof}

\begin{remark}\label{remark:torsion}
  Conditions (a)-(c) stated in Theorem \ref{thm2} can be directly verified for a given 3-dimensional path geometry. The key element in the theorem is the splitting  $\scV=\scV_1\oplus\scV_2$. Initially, $\scV_1$ and $\scV_2$ are unknown, but they can be easily identified using the approach presented in \cite[Section 4.2]{KM-Cayley}. This process requires determining the torsion of the structure, which is expressed by an explicit formula \eqref{eq:fels-invariants-explicit} in terms of the corresponding system of ODEs. Then, one immediately deduces that point (a) implies that $\scV_i$ is spanned by an eigenvector of the torsion $\bT$ treated as an endomorphism of $\scV$ (see Remark \ref{rmk:wuenschman}). Indeed, one can introduce $\bT$ as a trace-free part of matrix $(F^j_i)_{i,j=1,2}$ such that
  \[
    [X,[X,V_i]]=-F_i^jV_j\mod\scX
  \]
  for certain appropriately chosen sections $V_1,V_2$ of $\scV$, and $X$ of $\scX$. Consequently, it follows that, if $\scV_i$ is spanned by $V_i$ then condition $(a)$ implies that $(F^j_i)_{i,j=1,2}$ is diagonal and $V_i$'s are eigenvectors of $\bT$. Furthermore, having $\scB_i$ one can check the existence of $\scK_i$ by solving a linear system of algebraic equations. This will be expressed in terms of the curvature $\bC$ in  \ref{sec:path-geom-aris}.
\end{remark}

\subsection{Path geometry of Lewy curves in higher dimensions}\label{sec:higher-dimensions}
In Proposition \ref{prop:flatcase}, we showed that Lewy curves define a path geometry on flat para-CR structures. This path geometry is explicitly given in  Example \ref{exa:corr-syst-odes-flat-paraCR} as a system of ODEs. Now we prove the converse. 
\begin{theorem}\label{thm4}
  Given a para-CR structure $(N,\scD_1,\scD_2)$ of dimension $2n+1$, $n\geq 2$, its Lewy curves define  a path geometry on $N$ if and only if  it is flat, in which case they coincide with chains.
\end{theorem}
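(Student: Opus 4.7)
The ``if'' direction is provided by Proposition \ref{prop:flatcase}; to also identify Lewy curves with chains in the flat case, I would show that the ODE system \eqref{eqq4} derived in Example \ref{exa:corr-syst-odes-flat-paraCR} coincides with the chain ODE of the flat para-CR model. Both families are canonically associated to the homogeneous model $G/P_{1,n+1}\cong\FF_{1,n+1}(\RR^{n+2})$, so the coincidence can be checked either by direct comparison with the chain equations extracted from the Cartan connection of Theorem~\ref{thm:paraCR-cartan-conn}, or invariantly: both families are $G$-invariant and transverse to $\scC$, and through each transverse direction in the flat model there is a unique such $G$-invariant family, forcing them to agree.

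For the converse I plan to derive the differential system satisfied by Lewy curves from the defining function $\Phi$ and analyze its order. Following the elimination procedure of Example \ref{exa:corr-syst-odes-flat-paraCR}, parameterize a Lewy curve by $z$, so the unknowns are $t^i(z),a_i(z),b(z)$ with $b$ determined from $\Phi(z,t,a,b)=0$. The incidence conditions
\[
\Phi(\hat z_\alpha,\hat t_\alpha,a,b)=0,\qquad \Phi(z,t,\hat a_\alpha,\hat b_\alpha)=0,\qquad \alpha=1,\ldots,n,
\]
depend on $2n(n+1)$ parameters $(\hat z_\alpha,\hat t_\alpha,\hat a_\alpha,\hat b_\alpha)$. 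In the flat model of Example \ref{exa:flat-para-CR} the incidence relations are linear in these parameters, so two differentiations in $z$ suffice to eliminate all of them and produce a second-order system. For general $\Phi$, the nonlinearity in the parameters forces strictly more differentiations during elimination, introducing derivatives of order $\geq 3$ and thereby breaking the second-order path geometry structure.

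The core step is to show that the eliminated system remains of second order only when the para-CR structure is flat. I would expand $\Phi$ in a jet adapted to para-CR normal coordinates around a reference incidence point, and track the coefficient of the leading higher-order term (a third-order derivative of $t^i$ or $a_i$) produced by the elimination. This coefficient should come out as a tensorial multiple of the fundamental para-CR curvature $K^{il}_{jk}$ of Theorem~\ref{thm:paraCR-cartan-conn}, whose vanishing is equivalent to flatness. The numerical discrepancy $2n(n+1)>4n$ for $n\geq 2$ is precisely why the ``flat form'' of elimination cannot work in general: the required $2n(n-1)$-dimensional redundancy of parameters (so that many $(n+1)$-tuples of hypersurfaces define the same intersection curve) can only arise when the hypersurface families $\{H_x\}$ and $\{H_y\}$ carry a global linear/projective structure, which invariantly means para-CR flatness.

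The main obstacle I anticipate is the explicit jet computation identifying this obstruction as a multiple of $K^{il}_{jk}$. A conceptually cleaner alternative route would be to generalize Theorem~\ref{thm1} to higher dimensions: if Lewy curves form a path geometry, one reconstructs totally aligned codimension-one submanifolds $\Sigma^1_x,\Sigma^2_y\subset N$, which carry induced path geometries equivalent via $\pi_1$ and $\pi_2$ to path geometries on $M_1$ and $M_2$ by a higher-dimensional analogue of Proposition~\ref{prop2}. The extra rigidity present in codimension $\geq 2$ but absent in dimension three then forces the induced path geometries on $M_i$ to be (dually) projective with maximal symmetry group $\mathrm{PGL}(n+2,\RR)$, which by the equivalence in Proposition~\ref{prop:jet-realization-para-cr-structures} between PDE systems \eqref{eq:pde-system-para-CR} and para-CR structures yields flatness.
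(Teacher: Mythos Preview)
Your first approach---eliminating parameters from the defining function and tracking the jet-order of the resulting ODE---is plausible heuristically, but you yourself flag the obstacle: actually identifying the third-order obstruction with $K^{il}_{jk}$ is not carried out, and the parameter-counting inequality $2n(n+1)>4n$ is suggestive rather than a proof. Nothing in the paper's setup guarantees that the obstruction has a clean tensorial form at third order, and you give no mechanism for this. As written this is a sketch of a program, not a proof.

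Your alternative route is closer in spirit to the paper but misses the actual mechanism. The paper does \emph{not} generalize Theorem~\ref{thm1}; instead it exploits directly the non-uniqueness of the representation \eqref{eq_direction} when $n\geq 2$. Concretely: if Lewy curves form a path geometry, then a transverse direction $\langle v\rangle$ can be written as the intersection of $2n$ tangent hyperplanes $T_p\Sigma^1_{x_i},T_p\Sigma^2_{y_j}$ in many ways, and all of the resulting Lewy curves must coincide. Extending this observation to subspaces $R\subset T_pN$, one obtains that the intersections $\Gamma_{x_1\ldots x_n}=\Sigma^1_{x_1}\cap\cdots\cap\Sigma^1_{x_n}$ depend only on the subspace they are tangent to, and their projections $L_{x_1\ldots x_n}$ to $M_2$ (likewise to $M_1$) define a genuine path geometry. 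The crucial step you do not have is then: for \emph{any} $2$-plane $R\subset T_xM_1$ there is a totally aligned surface $\Pi_R$ tangent to it, so by the interpretation of torsion in Remark~\ref{remark:torsion} every such $R$ is an eigenspace of $\bT_{M_1}$, forcing $\bT_{M_1}=0$ since it is trace-free. The same abundance of totally aligned surfaces gives $\beta$-integrability of the resulting Segre structure on the space of paths, and Grossman's result that $\alpha$- and $\beta$-integrable Segre structures are flat finishes the argument. Flatness of the induced path geometries on $M_i$, together with the hypersurfaces $H_y$ being totally aligned (hence projective hyperplanes), yields flatness of the para-CR structure.

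In short, the gap is that you invoke ``extra rigidity in codimension $\geq 2$'' without identifying it. The rigidity is precisely the eigenspace/torsion argument combined with $\beta$-integrability of the Segre structure, and neither appears in your outline.
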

\begin{proof}
  If  Lewy curves define a path geometry on $N$, then any Lewy curve $\gamma_{x_1\ldots x_ny_1\ldots y_n}$ is uniquely determined by a point and a tangent direction. However, unlike the case $n=1,$ when  $n\geq 2$ a transverse tangent direction to $\scC$ does not correspond to a unique set of  intersecting $2n$ hyperplanes, as explained in Corollary \ref{cor:tangentplane}.     The proof of this theorem relies on exploiting this non-uniqueness property of the set of such intersecting hyperplanes for any tangent direction transverse to $\scC$. Take a  point $p\in N$ and a transverse  direction to the contact distribution, $\langle v\rangle\subset T_pN\backslash\scC_p,$ such that
  \begin{equation}\label{eq:two-directions}
    \langle v\rangle=\bigcap_{i=1}^nK^{\scD_1}_{i}\cap  \bigcap_{i=1}^nK^{\scD_2}_{i}=\bigcap_{i=1}^n\tilde K^{\scD_1}_{i}\cap  \bigcap_{i=1}^n\tilde K^{\scD_2}_{i}.
  \end{equation}
  Assuming that $[v]\in\PP(T_pN\backslash\scC_p)$ is tangent to a Lewy curve, as guaranteed by Corollary \ref{cor:tangentplane}, one has
  \begin{equation}\label{e1}
    \langle v\rangle=\bigcap_{i=1}^nT_p\Sigma^1_{x_i}\cap \bigcap_{i=1}^nT_p\Sigma^2_{y_i}=\bigcap_{i=1}^nT_p\Sigma^1_{\tilde x_i}\cap \bigcap_{i=1}^nT_p\Sigma^2_{\tilde y_i}
  \end{equation}
  for sets of  $n$ points $(x_i)_{i=1}^n$, $(\tilde x_i)_{i=1}^n$ in $M_1$ and $(y_i)_{i=1}^n$, $(\tilde y_i)_{i=1}^n$ in $M_2$. It follows that the corresponding Lewy curve can be obtained in two different ways
  \begin{equation}\label{e2}
    \gamma_{x_1\ldots x_ny_1\ldots y_n}=\Sigma^1_{x_1}\cap\ldots\cap \Sigma^1_{x_n}\cap \Sigma^2_{y_1}\cap\ldots\cap\Sigma^2_{y_n}=\Sigma^1_{\tilde x_1}\cap\ldots\cap \Sigma^1_{\tilde x_n}\cap \Sigma^2_{\tilde y_1}\cap\ldots\cap\Sigma^2_{\tilde y_n}.
  \end{equation}
  The observation above can be extended from directions in $T_pN\backslash\scC_p$ to  any linear subspace $R\subset T_pN\backslash\scC_p$  spanned by open set of transverse  directions $[v]\in\PP(T_pN\backslash\scC_p)$ that are tangent to Lewy curves. As a result, for any $l,k\leq n,$
  one obtain an open set of linear subspaces  $R\subset T_pN\backslash\scC_p$ that can be written as
  \begin{equation}\label{eq:L-linearSubspace}
    R=\bigcap_{i=1}^kT_p\Sigma^1_{x_i}\cap \bigcap_{i=1}^lT_p\Sigma^2_{y_i}.
  \end{equation}
  It follows that such a subspace in $T_pN\backslash\scC_p$ is tangent to a unique submanifold obtained as an intersection of the form 
  \begin{equation}\label{eq:lambda-submanifold}
    \Gamma_{x_1\ldots x_ky_1\ldots y_l}=\Sigma^1_{x_1}\cap\ldots\cap \Sigma^1_{x_k}\cap \Sigma^2_{y_1}\cap\ldots\cap\Sigma^2_{y_l}.
  \end{equation}
  It is straightforward to see that setting $l=0$ and $k=n,$ one obtains
  \begin{equation}\label{eq:lambda-submfld-M2}
    \Gamma_{x_1\ldots x_n}=\Sigma^1_{x_1}\cap\ldots\cap \Sigma^1_{x_n}\Rightarrow \pi_2(\Gamma_{x_1\ldots x_n})=H_{x_1}\cap\ldots\cap H_{x_n}=L_{x_1\ldots x_n}
  \end{equation}
  where the curves  $L_{x_1\ldots x_n}\subset M_2$ are defined in \eqref{eq:L-curves-M1-M2}. The key observation is that the family of curves of the form  $L_{x_1\ldots x_n}$ define a path geometry on $M_2.$ In fact, by our discussion after Definition \ref{def:Lewy}, the $(n+1)$-dimensional submanifolds  $\Gamma_{x_1\ldots x_n}= \pi^{-1}_2(L_{x_1\ldots x_n})\subset N$   project to such curves on $M_2$ and are uniquely determined by a tangent direction $\langle u\rangle\subset T_{y}M_2.$ More precisely, taking any $p\in N$ that is a preimage of $y$ i.e. $\pi_2(p)=y,$ then the rank $n+1$ subspace  $R=(\pi_{2})_*^{-1}(\langle u\rangle)\subset T_pN$ uniquely determines the submanifold $\Gamma_{x_1\ldots x_n}$ which implies that the direction $\langle u\rangle\subset T_yM_2$ uniquely determines the curve $L_{x_1\ldots x_n}.$  Analogously,  $M_1$ is equipped with a path geometry. Note that all Lewy curves $\gamma_{x_1\ldots x_n y_1\ldots y_n}=\Gamma_{x_1\ldots x_n}\cap \Gamma_{y_1\ldots y_n}$ project to  curves of  path geometries on $M_1$ and $M_2.$

  Now we prove that the induced path geometries on $M_1$ and $M_2$ are flat. It suffices to show for $M_1$ and the same argument works for $M_2.$  First we show that the path geometry on $M_1$ is torsion-free. We shall denote by $(\scX_{M_1},\scV_{M_1})$ the associated pair on $Q_{M_1}\subset \PP TM_1$ . As was shown before all $H_y$ are totally aligned (see Definition \ref{def:totallyaligned}) with respect to curves of the form $L_{y_1\ldots y_n}$, since $L_{y_1\ldots y_n}=\cap_{i=1}^nH_{y_i}$. Similarly, any $(n+1-k)$-dimensional manifold
  \[
    H_{y_1\ldots y_k}=H_{y_1}\cap\ldots\cap H_{y_k}\subset M_1,
  \]
  where $(y_i)_{i=1}^k$ is in general position, is totally aligned with respect to the path geometry on $M_1$.

Given a point $x\in M_1,$ take a curve $\gamma=L_{y_1\ldots y_n}$ passing through $x$ and a direction $[v]\in \PP T_xM_1$ along which the path geometry on $M_1$ is defined such that $[v]$ does not coincide with $[\dot\gamma(x)]\in\PP T_xM_1$. From the discussion above, there is a unique surface $\Pi_R$ tangent to the plane $R=\langle v,\dot\gamma\rangle$ at $x$, which is totally aligned with the path geometry on $M_1$. Consequently, similar to the proof of part (a) in Theorem \ref{thm2}, we get that $\Pi_R\subset M_1$ gives rise to a 3-dimensional submanifold $\tilde \Pi_R\subset Q_{M_1}.$ Intuitively, one has $\tilde\Pi_R\subset \PP T\Pi_R$ as an open subset. If $\scV_R\subset \scV$ is the  fiber of $\tilde\Pi_R\to \Pi_R$, since $\tilde\Pi_R$ is totally aligned with respect to the path geometry on $M_1,$   Definition \ref{def:generalized-path-geom} implies
  \begin{equation}\label{e5}
    [\scV_R,\scX_{M_1}]=T\tilde\Pi_R.
  \end{equation}
 Using the interpretation of torsion given in Remark \ref{remark:torsion}, it follows that $\scV_R$ is an eigenspace for the torsion $\bT_{M_1}$ of the path geometry $(Q_{M_1},\scX_{M_1},\scV_{M_1})$. As a result, if $n\geq 2,$ any 2-plane  in an open set of  2-planes defined similarly to $R$ at each tangent space of $M_1$ is an eigenspace of $\bT_{M_1}.$  Since by \eqref{eq:fels-invariants-path} the torsion of a path  geometry is trace-free,  it follows that $\bT_{M_1}=0.$ 

 To prove flatness of  path geometries on $M_1$ and $M_2,$ by Theorem \ref{thm:path-geom-cartan-conn} it remains to show  that  curvatures $\bC_{M_1}$ and $\bC_{M_2}$ are zero. Let us only treat  $M_1.$ Having $\bT_{M_1}=0$ for the path geometry on $M_1$ implies that  the $2n$-dimensional space of paths $S_{M_1}:=Q_{M_1}/\scX_{M_1}$ has an \emph{almost Grassmannian structure}, also known as a $(n,2)$-Segre structure, which means that $TM_1\cong E\otimes F$ for  vector bundles $E\to M_1$ and $F\to M_1$ of rank $2$ and $n,$ respectively.  In \cite{Grossman-Thesis}  it was shown that  such structures are defined on the  space of paths in torsion-free path geometries and they are additionally \emph{semi-integrable}, also referred to as \emph{$\alpha$-integrable}, which, using Cartan connection \eqref{eq:path-geom-cartan-conn-3D}, means that   the Pfaffian system $\cI_{\alpha}=\{\alpha^0,\alpha^1,\cdots,\alpha^n\}$ is integrable. The integral manifolds of $\cI_{\alpha}$  are the projection of the $n$-dimensional vertical fibers of $Q_{M_1}\to M_1$ to $S_{M_1}$, whose lift foliate the $\PP^1$-bundle $\PP E\to M_1.$ Additionally, it turns out that the path geometry of para-CR Lewy curves when $n\geq 2$ is also \emph{$\beta$-integrable}, i.e. with respect to the Cartan connection \eqref{eq:path-geom-cartan-conn-3D} the Pfaffian system $\cI_{\beta}=\{\alpha^2,\cdots,\alpha^n,\beta^2,\cdots,\beta^n,\psi^2_1,\cdots,\psi^n_1\}$ is integrable. Note that the integral leaves of $\cI_{\beta}$ are surfaces that foliate the $\PP^{n-1}$-bundle $\PP F\to M_1.$  Their $\beta$-integrability   is due to the fact that at each point $q\in S_{M_1}$ the submanifolds  $\Pi_{R}$ defined previously are tangent to rank 2 subspaces of the form $\zeta\otimes\R^2$, for all $\zeta\in \R^n$, using the isomorphism $T_qS_{M_1}\cong \R^n\otimes\R^2.$ Such surfaces are also  referred  to as  \emph{$\beta$-surfaces}. As was shown in \cite{Grossman-Thesis},  Segre structures that are both $\alpha$-integrable and $\beta$-integrable are flat. This fact can be verified by checking the Frobenius condition for $\cI_{\beta}$ using structure equations \eqref{eq:path-geom-cartan-conn-3D} for torsion-free path geometries,  which are found to be equivalent to  $C^i_{jkl}=0.$

  Having shown that the induced path geometries on $M_1$ and $M_2$ are flat, we now  show that the initial para-CR structure on $N$ is flat. Since $M_1$ has a flat path geometry,  it is locally   isomorphic to $\RR\PP^{n+1}$  with  its canonical path geometry defined by projective lines. By our discussion above, the hyperplanes $H_x\subset M_1$ have to be totally aligned with the path geometry and, therefore, are the projective hyperplanes. By the discussion in \ref{sec:paraCR}, it follows that $N$ is  locally isomorphic to  the flat model in  para-CR geometry.

  The last part of the theorem involving chains follows from  Proposition \ref{prop:lewy-curves-chains}.
\end{proof}
\begin{proposition}\label{prop:lewy-curves-chains}
  Path geometries defined by chains of flat para-CR structures  correspond to point equivalence classes of  ODE systems \eqref{eqq4}.
\end{proposition}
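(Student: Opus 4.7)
The plan is to identify the path geometry of chains of the flat para-CR structure with that of its Lewy curves, and then invoke the explicit ODE system \eqref{eqq4} already derived for Lewy curves in Example~\ref{exa:corr-syst-odes-flat-paraCR}. Since both families are defined intrinsically from the para-CR structure, both are preserved by the full automorphism group $G=\mathrm{PGL}(n+2,\R)$ of the flat model, and both give path geometries on $N$ (chains always, and Lewy curves in the flat case by Proposition~\ref{prop:flatcase}). Thus I would work at the level of $G$-invariant path geometries on $N=G/P_{1,n+1}$.

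First, I would record that in the flat case the Cartan connection $\phi$ of Theorem~\ref{thm:paraCR-cartan-conn} is the Maurer--Cartan form of $G$, and that chains are the projections to $N$ of the integral curves of $\phi^{-1}(X)$ for $X$ in the $-2$-component of the $|2|$-grading of $\mathfrak{sl}(n+2,\R)$. In the flat model these integral curves are orbits of $1$-parameter subgroups $t\mapsto \exp(tX)\cdot P$, so chains form a $G$-invariant path geometry on $N$. The same holds for Lewy curves, because the defining function $\Phi=z-a\cdot t-b$ of Example~\ref{exa:flat-para-CR} and the incidence data used in Definition~\ref{def:Lewy} are invariant (up to the projective action) under $G$.

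Next I would invoke a uniqueness lemma: $G$ acts transitively on pairs $(p,[v])$ with $[v]\in \PP(T_pN\setminus\scC_p)$, and the isotropy $H\subset G$ of a fixed pair $(p_0,[v_0])$ acts on the space of germs of curves through $p_0$ tangent to $[v_0]$. A $G$-invariant assignment of a curve to each such pair corresponds to an $H$-invariant germ modulo reparametrization, and in the flat para-CR model the space of such germs is a single point. Consequently the two $G$-invariant path geometries on $N$ given by chains and by Lewy curves coincide. Combining with Example~\ref{exa:corr-syst-odes-flat-paraCR}, which computed the ODE system of Lewy curves of the flat para-CR structure to be \eqref{eqq4}, one concludes that chains of the flat para-CR structure correspond to the point equivalence class of \eqref{eqq4}.

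The main obstacle is the uniqueness step: showing that the isotropy $H$ admits a unique invariant germ of transverse curve up to reparametrization. This can be handled either by a direct jet computation on $\mathfrak{sl}(n+2,\R)$ (writing out $H$ as a subgroup of $P_{1,n+1}$ and tracking the action on the normal bundle of $[v_0]$ to sufficient order), or by appealing to the general parabolic-geometric fact that the chain through $(p_0,[v_0])$ is the unique distinguished curve determined by the Cartan connection. If one prefers to bypass this argument altogether, a direct computation of $\exp(tX)\cdot P$ in the affine chart used in Example~\ref{exa:corr-syst-odes-flat-paraCR} yields, after elimination of parameters exactly as in that example, the system \eqref{eqq4}; $G$-invariance then extends the result from one chain to all chains.
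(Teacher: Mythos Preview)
Your approach is correct but genuinely different from the paper's. The paper proves the proposition by \emph{direct computation}: it recalls the description of chains as projections of integral curves of $\phi^{-1}(\fg_{-2})$, identifies $Q=\PP(TN\setminus\scC)$ with $\cP/S$ for $S\subset P_{1,n+1}$ with $\fs=\fg_0\oplus\fg_2$, introduces explicit fiber coordinates $(q^i,q_{n+i})$ via a section $t\colon Q\to\cP$, writes down the closed $2$-form $\rho=s^*(\omega^i\wedge\theta_{n+i}-\omega_{i+n}\wedge\theta^i)$, and computes its characteristic direction $v$ in the jet coordinates $(z,t^i,p_i)$ of Proposition~\ref{prop:jet-realization-para-cr-structures}. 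After a change of fiber variables this yields the ODE system \eqref{eqq4} on the nose. Your route instead argues that chains and Lewy curves are both $G$-invariant path geometries on the flat model and invokes uniqueness of such, then simply quotes the Lewy computation of Example~\ref{exa:corr-syst-odes-flat-paraCR}.

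Your uniqueness step is the substantive point, and it does go through: a $G$-invariant line field on $Q=G/S$ corresponds to an $S$-invariant line in $\fg/\fs$, and the grading element in $\fg_0\subset\fs$ acts on $\fg_{-2},\fg_{-1},\fg_1\subset\fg/\fs$ with distinct eigenvalues $-2,-1,+1$, forcing any $S$-invariant line to lie in a single graded piece; requiring a nonzero $\fg_{-2}$-component (transversality to $\scC$) then pins it down as $\fg_{-2}$. This is cleaner than the jet-by-jet formulation you sketched, and avoids the slight circularity in your second suggestion (``the chain is the unique distinguished curve determined by the Cartan connection''), which is essentially the statement to be proved. What the paper's computational approach buys is that the machinery via $\rho$ extends to non-flat para-CR structures (cf.\ Remark~\ref{rmk:path-geometry-lewy-remark1}); your argument is tied to homogeneity of the flat model but is more conceptual and avoids coordinates entirely.
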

\begin{proof}
  Although  the case $n=1$ follows from  Theorem \ref{cor:3d-path-geometries-dancing-chains}, one can also directly see that the pair of ODEs \eqref{eq:lewy-curves-flat-3D-paraCR} for Lewy curves of flat para-CR structures is identical to the pair of ODEs obtained for chains of flat para-CR 3-manifolds in \cite[Equation (3.12)]{KM-chains}.

  For  $n\geq 2,$  following the derivation method in \cite[Section 3.2.1]{KM-chains},  the system of ODEs for para-CR chains can be obtained, which turn out to be identical to those for Lewy curves in \eqref{eqq4}.

  To do so we first recall  the Cartan geometric description of chains in para-CR structure following \cite{CZ-CR}. By Theorem \ref{thm:paraCR-cartan-conn}, para-CR structures are encoded by Cartan geometries $(\cP\to N,\phi)$ of type $(\mathrm{PGL}(n+2,\RR),P_{1,n+1})$ where the Cartan connection $\phi$ is a $\mathfrak{sl}(n+2,\RR)$-valued 1-form. The grading of $\mathfrak{sl}(n+2,\RR)$ that corresponds to $P_{1,n+1}$ is a contact grading
  \[\mathfrak{sl}(n+2,\RR)=\fg_{-2}\oplus\fg_{-1}\oplus\fg_{0}\oplus\fg_{1}\oplus \fg_2.\]  
  As unparametrized curves, chains are defined on $N$ and  are the projection of the  integral curves of the the vector field $\phi^{-1}(X)\in\Gamma(T\cP)$ from $\cP$ to $N,$ where $X$ is a nonzero element of $\fg_{-2}.$ In particular, it is clear from the definition that chains are transverse to the contact distribution on $N.$   More precisely, let us describe the set of transverse directions to the contact distribution. Let $S\subset P$ be the stabilizer of the line $l\subset\mathfrak{sl}(n+2,\RR)$  spanned by an element of $\fg_{-2}.$ It follows that  $\fs=\fg_{0}\oplus\fg_2\subset \fp=\fg_0\oplus\fg_1\oplus\fg_2$ where $\fs$ and $\fp$ are the  Lie algebras of $S$ and $P,$ respectively.  Moreover, it is straightforward to see that the  $P$-orbit of $l,$ denoted as $P\cdot l\subset\mathfrak{sl}(n+2,\RR)\slash\fp,$ is the set of all lines in $\mathfrak{sl}(n+2,\RR)\slash\fp$ that are not contained in $\fg^{-1}\slash\fp$ where $\fg^{-1}:=\fg_{-1}\oplus\fg_0\oplus\fg_1\oplus\fg_2$ and $\cP\times_P\fg^{-1}\slash\fp$ is the  contact distribution $\scC\subset TN.$ Thus, the bundle of directions that are transverse to the contact distribution, i.e.  $Q:=\PP(TN\backslash\scC),$ can be identified with $\cP\slash S:=\cP\times_P
  P\slash S.$ Viewing the Cartan connection $\phi\in\Omega^1(\cP,\mathfrak{sl}(n+2,\RR))$ as a connection on $Q,$ it gives 
  \[TQ\cong\cP\times_S (\mathfrak{sl}(n+2,\RR)\slash \fs).\] 
  Using the above identification, it follows that the induced path geometry  $(Q,\scV,\scX)$ is give by $\scX=\nu_*(\phi^{-1}(\fg_{-2}))$ and $\scV=\nu_*(\phi^{-1}(\fg_{1}))$ where $\nu\colon\cP\to Q$ is the projection.

  To relate the Cartan connections of para-CR structures, given by $(\cP\to N,\phi)$ where $\phi$ is as in \eqref{eq:path-geom-cartan-conn-3D}, and that of the path geometry $(\cG\to Q,\psi)$ defined by its chains, where $\psi$ is given by \eqref{eq:paraCR-geom-cartan-conn-3D},  using the extension functor \cite[Section 3.1]{CZ-CR}, consider the natural bundle map $\iota\colon \cP\to \cG:=\cP\times_S P_{1,2},$ determined by Lie group homomorphism $\iota\colon S\to P_{1,2}$  where $P_{1,2}\subset \mathrm{SL}(2n+2,\RR)$ is the parabolic defined in \ref{sec:path-geom-defin}. Then one obtains
  \begin{equation}\label{eq:inclusion-para-path}
    \def\arraystretch{1.3}\iota^*\psi=
    \begin{pmatrix}
      -\phi^{n+1}_{n+1}-\lambda & \theta_0 & \half \theta_{n+i} & \half\theta_i\\
      \omega^0 & \phi^{n+1}_{n+1}+\lambda & \half\omega_{n+i} & -\half\omega_i\\
      \omega^i & \theta^i & \phi^i_j+\lambda\delta^i_j & 0\\
      \omega^{i+n} & -\theta^{n+i} & 0 & -\tilde\phi^i_j-\lambda\delta^i_j
    \end{pmatrix}
  \end{equation}
  where $\lambda =\tfrac{1}{2}\phi^i_i,$ $\tilde\phi^i_j=\phi^j_i$ and we have used $\delta^i_j$ to raise and lower indices in $\omega^i,\theta^i$ and $\omega_{n+i}$ and $\theta_{n+i}.$

  Lastly, similar to the observation made in \cite[Section 6]{MS-cone} and \cite[Proposition 3.2]{KM-chains}, chains on $Q$ are characteristic curves of the 2-form
  \begin{equation}\label{eq:rho-quasi-symp}
    \rho:=s^*(\omega^i\w\theta_{n+i}-\omega_{i+n}\w\theta^i)\in\Omega^2(Q),
  \end{equation}
  for any section $s\colon Q\to \cP.$ More precisely, using the Cartan connection $\iota^*\psi$  \eqref{eq:inclusion-para-path}, the tangent field of chains are given by $\langle v\rangle$ where $v=\tfrac{\partial}{\partial s^*\omega^0}$ which is the characteristic direction of $\rho,$ i.e.
  \begin{equation}\label{eq:char-direction}
    \rho(v)=0,\quad v\im\exd\rho=0.
  \end{equation}
  To compute the 2-form $\rho$ on $Q$ we start with a choice of  coordinate system for a flat para-CR structure on $N$. Recall from Proposition \ref{prop:jet-realization-para-cr-structures} that by the local isomorphism $N\cong J^1(\RR^n,\RR),$ one has the following coframe on $N$
  \[\zeta^0=\exd z - p_i\exd t^i,\quad \zeta^i=\exd t^i,\quad \zeta_{n+i}=\exd p_i-f_{ij}\exd t^j,\]
  wherein  $f_{ij}$ can be set to zero in the case of flat para-CR structures. To relate the coframe above to the entries of $\psi$ in \eqref{eq:paraCR-geom-cartan-conn-3D}, it follows that the only non-zero entries are given by
  \begin{equation}\label{eq:0-adapted-coframe-zeta}
    s^* \omega^0=\zeta^0,\quad s^*\omega^i=\zeta^i,\quad s^*\omega_{n+i}=\zeta_{n+i}
  \end{equation}
  for a section $s\colon J^1(\RR^{n},\RR)\to\cP.$

  To introduce a coordinate system on $Q:=\PP(TN\backslash\scC)$, which we identified with $\cP\slash S$ above, we shall first parameterize the fibers of $\cP\to N$  which is the parabolic subgroup $P_{1,n+1}\subset\mathrm{SL}(n+2,\RR)$. Writing $P_{1,n+1}=P_0\rtimes P_+,$ where $P_0$ is the
  reductive subgroup, referred to as the structure group, and $P_+$ is the nilpotent normal subgroup, one
  has  $P_0=\mathrm{GL}(n,\RR)\times \RR^+$ and
  \begin{equation}\label{eq:P+}
    P_+=\left\{ B\in \mathrm{SL}(n+2,\RR)\ \vline\  B= \begin{pmatrix}
        1 & q_{i+n} & \half q^iq_{i+n} +q_0\\
        0 & \mathrm{Id} & q^i\\
        0 & 0 & 1
      \end{pmatrix}\right\}.
  \end{equation}
  By the above description of $S$ and that $Q=\cP\slash S,$ it follows that $Q$ can be identified with a section $t\colon Q\to \cP$ given by  $P_0=\mathrm{Id}$ and $q_0=0$ in \eqref{eq:P+}, as a result of which the parameters $(q^i,q_{n+i})$ can be taken as the fiber coordinates for the fibration $Q\to N.$ Now we have a coordinate system $(z,t^i,p_i,q^i,q_{n+i})$ on $Q.$

  The next step is to obtain an  adapted coframe on $Q$ from the coframe \eqref{eq:0-adapted-coframe-zeta} on $N.$ Recall that the transformation of the Cartan connection along the fibers of  $\cP\to N$ by an action of $g\in P_{1,n+1}$ is given by
  \begin{equation}
    \label{eq:gauge-transformation-Cartan-conn}
    \phi(w)\to \phi(r_gw)=g^{-1}\phi(w)g+g^{-1}\exd g.
  \end{equation}
  As a result, using the gauge transformation \eqref{eq:gauge-transformation-Cartan-conn} and restricting  $g$ to the section $t\colon Q\to \cP$ which we identified with $Q$, i.e.  $P_0=\mathrm{Id}$ and $q_0=0,$  one obtains a lift of the adapted coframe \eqref{eq:0-adapted-coframe-zeta} to $Q$ given by
  \[
    \begin{gathered}
      t^*\omega^0=\zeta^0,\quad t^*\omega^i=\zeta^i-q^i\zeta^0,\quad t^*\omega_{n+i}=\zeta_{n+i}+q_{n+i}\zeta^0\\
      t^*\theta^i=\exd q^i+\half q^jq_{n+j}(\zeta^i-q^i\zeta^0)-q^i(q^j\zeta_{j+n})\\
      t^*\theta_{n+i}=\exd q_{n+i}+\half q^jq_{n+j}(\zeta_{n+i}+q_{n+i}\zeta^0)-q_{n+i}(q_{n+j}\zeta^{j})\\
    \end{gathered} 
  \]
  Using this adapted coframe, the close 2-form $\rho$ in \eqref{eq:rho-quasi-symp} is given by
  \begin{equation}\label{eq:quasi-symp}
    \begin{aligned}
      \rho=&(q_{n+i}\exd q^i+q^i\exd q_{n+i} )\w \exd z + q_{i+n}q^i\exd t^j\w\exd p_j-\exd p_i\w\exd q^i\\
      &+\exd t^i\w\exd q_{n+i}+p_j\exd t^j\w(q_{n+i}\exd q^i+q^i\exd q_{i+n})
    \end{aligned}
  \end{equation}
  As was mentioned above, chains are the characteristic directions of $\rho$ and since they  are  transverse to the contact distribution, it follows that they can be expressed as $\langle v \rangle$ where
  \begin{equation}\label{eq:v-char-direc}
    v=\partial_z+\tfrac{q^i}{p_j q^j+1}\partial_{t^i}+\tfrac{q_{i+n}}{p_j q^j+1}\partial_{p_i} +\tfrac{q^iq_{n+i}}{p_jq^j+1}(q_{n+k}\partial_{q_{n+k}}-q^k\partial_{q^k} )
  \end{equation}
  Using the  change of variables
  \[T^i=\tfrac{q^i}{p_j q^j+1},\quad P_i=\tfrac{q_{i+n}}{p_j q^j+1}\]
  one obtains
  \[v=\partial_z+T^i\partial_{t^i}+P_{i}\partial_{p_i}+\tfrac{2P_jT^j}{T^kp_k-1}P_i\partial_{P_i}\]
  Comparing $v$ to the line field $X_F$ in \eqref{eq:TotalDerivative} defined by a path geometry, it follows that chains are the solution curves of the point equivalence class of  ODE system
  \begin{equation}\label{eq:ODEs-chains}
    (t^i)''=0,\quad (p_i)''=\tfrac{2(p_j)'(t^j)'}{p_k(t^k)'-1}(p_i)'.
  \end{equation}
\end{proof}
 
\begin{remark}\label{rmk:path-geometry-lewy-remark1}
  The point equivalence class of pair of ODEs that corresponds to  chains of a flat para-CR 3-manifold, as given in \cite[Equation (3.12)]{KM-chains}, coincides with the pair of ODEs \eqref{eq:lewy-curves-flat-3D-paraCR}, which by Remark \ref{rmk:corr-point-equiv} is point equivalent to the ODE system \eqref{eq:ODEs-chains} when $n=1.$ We point out that for an arbitrary analytic para-CR structure, starting from the coframe \eqref{eq:0-adapted-coframe-zeta} for a set of compatible functions $f_{ij},$ after obtaining a fully adapted coframe, as in \cite[Equation (3.7)]{KM-chains} in the 3-dimensional case, one can follow the same steps as in the proof of Proposition \ref{prop:lewy-curves-chains} to find the systems of ODEs for  its chains. 
  Lastly, we point out that, as we have seen so far,  many notions in CR geometry, e.g. chains, Segre family, and Lewy curves, naturally extend to the para-CR setting.  It would be interesting to develop a para-CR analogue of the so-called  \emph{stationary discs} and their relation to the Segre family (see \cite{Florian2} for such a relation in the CR case).

\end{remark}

\begin{remark}
  In the proof of Theorem \ref{thm4}, the relation between a flat projective structure on $M_1$ and flat para-CR structure on $N$ is the special case of the correspondence space construction in \cite{Takeuchi}  where projective structures on a manifold $M$ correspond to certain classes of \emph{partially integrable almost para-CR structure} on $\PP T^*M$; also see \cite{CapCrelle}. When the projective structure is flat, the corresponding almost para-CR structure is  the flat para-CR structure as in Theorem \ref{thm4}. 
\end{remark}

\section{Cartan geometric description  in dimension 3}\label{sec:path-geom-aris}   

In this section we give a Cartan geometric description of  path geometries defined by Lewy curves of a 3-dimensional para-CR structure. Throughout the section $(Q,\scV,\scX)$ is a 3-dimensional path geometry on $N=Q/\scV$. We carry out a natural reduction of the principal bundle for such path geometries in \ref{sec:reduct-princ-bundle},     investigate the cases of zero and nonzero torsion in \ref{sec:path-geometry-lewy-torsionfree} and \ref{sec:non-trivial-torsion}.

\subsection{Reduction of the principal bundle}\label{sec:reduct-princ-bundle}
In this section we  describe a reduction of the principal bundle in Theorem \ref{thm:path-geom-cartan-conn} which is canonically defined for the path geometry of Lewy curves.

\begin{proposition}\label{prop:torsion-curv-type-freestyling-reduction}
  Given a 3-dimensional path geometry with Cartan data $(\cG\to Q,\psi),$ if it satisfies conditions (a)-(c) in Theorem \ref{thm2} then it has a canonical reduction to  a principal $B$-bundle $\iota\colon\cG_{red}\hookrightarrow\cG,$  where $B\subset\mathrm{GL}(2,\RR)$ is the Borel subgroup, over which the following invariant conditions hold
  \[
    \iota^* A_0=\iota^*A_2=0,\quad \iota^*W_0=\iota^*W_4=0,\quad  \iota^*W_2=\pm 1,
  \]
  and the components of $\iota^*\psi$, as given in \eqref{eq:path-geom-cartan-conn-3D}, satisfy
  \[
    \begin{aligned}
      \iota^*\psi^2_1=\iota^*\psi^1_2=& 0,&&\\
      \iota^*\psi^2_2\equiv& -\iota^*\psi^1_1&&\mod \{\iota^*\alpha^0,\iota^*\alpha^1,\iota^*\alpha^2\}\\
      \iota^*\nu_1,\iota^*\nu_2\equiv & 0&&\mod  \{\iota^*\alpha^1,\iota^*\alpha^2\}\\
      \iota^*\mu_1,\iota^*\mu_2\equiv & 0&&\mod  \{\iota^*\alpha^0,\iota^*\alpha^1,\iota^*\alpha^2,\iota^*\beta^1,\iota^*\beta^2\}
    \end{aligned}
  \]
  As a result, the  curvature $\bC$ of  such path geometries, as a binary quartic \eqref{eq:quadric-quartic},  has at least two distinct real roots whose multiplicities are at most two and its torsion $\bT$ is either zero or has two distinct real roots. 
\end{proposition}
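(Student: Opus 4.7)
The plan is to carry out a sequence of canonical reductions of the Cartan bundle $(\cG,\psi)$ dictated by conditions (a), (b), (c) of Theorem \ref{thm2}, in each step absorbing torsion in the standard Cartan-geometric manner and tracking the effect on the fundamental invariants $\bT$ and $\bC$ via the Bianchi identities \eqref{eq:W-A-curvature-torsion-Bianchies}. The first step is to exploit the splitting $\scV=\scV_1\oplus\scV_2$, which by Remark \ref{remark:torsion} coincides with the eigenspaces of $\bT\colon\scV\to\scV$ when $\bT\neq 0$ and is furnished by the data otherwise. Requiring the vertical coordinate vector fields $\partial/\partial\beta^i$ to lie in $\scV_i$ reduces the $GL(2,\RR)$-block of the structure group to one that preserves the splitting. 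On this sub-bundle $(T^i_j)_{i,j=1,2}$ is diagonal, so by the dictionary of Proposition \ref{prop:3D-path-geom} one reads off $A_0=T^2_1=0$ and $A_2=-T^1_2=0$. The off-diagonal $\mathfrak{gl}(2)$-components $\psi^2_1,\psi^1_2$ become semi-basic and are absorbed into the remaining torsion to give $\iota^*\psi^2_1=\iota^*\psi^1_2=0$, while the congruence $\iota^*\psi^2_2\equiv-\iota^*\psi^1_1$ modulo the $\alpha$-forms follows from the trace condition on the Levi factor.

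Second, I would translate condition (b) of Theorem \ref{thm2} into algebraic conditions on $\bC$. The rank-two sub-distribution $\tilde\scK_1\subset\scB_1$ containing $\scV_1$ is determined, modulo $\scV_1$, by a single parameter $k$ specifying a line in the rank-two quotient $\scB_1/\scV_1=\langle\scX,[\scX,\scV_1]\rangle\!\!\mod\scV_1$. The Frobenius integrability of the rank-three distribution $\scK_1=\tilde\scK_1\oplus\scV_2$ becomes a polynomial equation in $k$ whose coefficients involve $W_0,W_1,W_2$ together with the residual absorbable pieces of $\psi^i_j,\nu_j$. A careful absorption based on the Bianchi identities \eqref{eq:W-A-curvature-torsion-Bianchies} that preserves the normalizations $A_0=A_2=0$ forces $W_0=0$; the symmetric analysis for $\tilde\scK_2$ and $\scK_2$ gives $W_4=0$. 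Condition (c), that the projections of $\scK_1$ and $\scK_2$ to $N$ span a contact distribution, then guarantees $W_2\neq 0$, and a final $\RR^\times$-reduction in the torus normalizes $W_2=\pm 1$. Propagating these reductions through the Bianchi equations yields $\iota^*\nu_j\equiv 0\mod\{\alpha^i\}$ and $\iota^*\mu_j\equiv 0\mod\{\alpha^i,\beta^i\}$, leaving a three-dimensional residual structure group matching the Borel subgroup $B\subset GL(2,\RR)$.

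The algebraic consequences for $\bT$ and $\bC$ then follow formally. With $A_0=A_2=0$, the binary quadric $\bT=2A_1\beta^1\beta^2$ is either zero or has the distinct real roots $\beta^1=0$ and $\beta^2=0$. With $W_0=W_4=0$ and $W_2=\pm 1$, the binary quartic factors as
\[
\bC=\beta^1\beta^2\bigl(4W_1(\beta^1)^2+6W_2\beta^1\beta^2+4W_3(\beta^2)^2\bigr),
\]
exhibiting the two distinct real roots $\beta^1=0$, $\beta^2=0$; and since $6W_2=\pm 6\neq 0$, neither $\beta^1$ nor $\beta^2$ can divide the inner quadratic twice, so no root of $\bC$ has multiplicity exceeding two. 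I expect the main technical obstacle to lie in the second paragraph above: translating the Frobenius integrability of $\scK_1$ and $\scK_2$ into the precise algebraic consequences $W_0=W_4=0$ is a bookkeeping-heavy calculation requiring careful absorption of torsion into the connection components $\nu_j$ and the off-diagonal $\psi^i_j$, and it is exactly here that the Bianchi identities are indispensable for confirming that the earlier normalizations are preserved throughout the remaining reductions.
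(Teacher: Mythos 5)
Your overall strategy---successive canonical reductions driven by conditions (a)--(c), with the splitting $\scV=\scV_1\oplus\scV_2$ read off from the eigenspaces of $\bT$, a final normalization $W_2=\pm1$, and the closing factorization argument---matches the paper's, and your last paragraph is correct: with $A_0=A_2=0$ the quadric is $2A_1\beta^1\beta^2$, and with $W_0=W_4=0$, $W_2\neq0$ the quartic $\bC=\beta^1\beta^2\bigl(4W_1(\beta^1)^2+6W_2\beta^1\beta^2+4W_3(\beta^2)^2\bigr)$ has the two distinct real roots $\beta^1=0,\beta^2=0$ and no root of multiplicity exceeding two. The genuine problem is in your second paragraph, and it is exactly the step you defer. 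The vanishings $W_0=W_4=0$ are \emph{not} consequences of condition (b); in the paper they already follow from condition (a), i.e.\ from the Frobenius integrability of $\scB_1=[\scX,\scV_1]$ and $\scB_2=[\scX,\scV_2]$. Concretely, integrability of $\cI_1=\{\alpha^2,\beta^2\}$ and $\cI_2=\{\alpha^1,\beta^1\}$ forces $A_0=A_2=0$ and pins the semi-basic forms down to $\psi^1_2=A^1_{21}\alpha^1+B^1_{21}\beta^1$ and $\psi^2_1=A^2_{12}\alpha^2+B^2_{12}\beta^2$; differentiating these identities and comparing with $\Psi^1_2$ and $\Psi^2_1$ (which carry $C^1_{222}=-W_4$ and $C^2_{111}=W_0$) then yields $W_0=W_4=0$. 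Condition (b) plays a different role: the translation parameters $p_1,p_2,q_1,q_2$ of $P_+$ (your parameter $k$) are normalized away by the group action on $A^1_{21},A^2_{12},B^1_{21},B^2_{12}$, and the integrability of the resulting systems $\cI_3,\cI_4$ together with condition (c) is what produces $\exd\alpha^0\equiv W_2\,\alpha^1\w\alpha^2 \mod\{\alpha^0\}$ and hence $W_2\neq0$. If you impose Frobenius on $\scK_1$ hoping to extract $W_0=0$, you will find the condition merely determines the position of $\tilde\scK_1$ inside $\scB_1$ and does not touch $W_0$; the argument stalls there unless you first exhaust the differential consequences of (a).

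A second, smaller misplacement: the congruence $\iota^*\psi^2_2\equiv-\iota^*\psi^1_1$ does not follow from any trace condition at the first reduction stage. After adapting to the splitting the structure group still contains the full torus $(\RR^*)^3$, so $\psi^1_1+\psi^2_2$ remains a genuine connection form. It becomes semi-basic only at the very end, because the normalization $W_2=\pm1$ kills the determinant scaling $a_{11}a_{22}$ (via $W_2(g^{-1}u)=a_{11}^2a_{22}^2\,W_2(u)$) and thereby cuts the residual group down to the three-dimensional Borel subgroup $B\subset\mathrm{GL}(2,\RR)$.
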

\begin{proof}
  The proof is done via a standard application of Cartan's reduction procedure. Recall that a 3D path geometry is a Cartan geometry of type $(\mathrm{PGL}(4,\RR),P_{1,2})$ where $P_{1,2}=P_0\ltimes P_+$ is the stabilizer of a flag of a line inside a plane in $\RR^4$,  $P_0=\RR^*\times\mathrm{GL}(2,\RR)$ is the reductive subgroup of $P_{1,2},$ referred to as the structure group, and $P_+$ is the nilpotent normal subgroup of $P_{1,2}$. 
  Parametrically,  the structure group $P_0$ is a  block-diagonal matrix expressed as
  \begin{equation}
    \label{eq:P-0-3Dpath}
    P_0=\left\{A\in\mathrm{SL}(4,\RR)\ \vline \ A=\mathrm{diag}(\tfrac{1}{a_{00}\det(\bH)},a_{00},\bH),\bH=
      \begin{pmatrix}
        a_{11} & a_{12}\\
        a_{21} & a_{22}
      \end{pmatrix}
    \right\}
  \end{equation}
  and one has 
  \begin{equation}
    \label{eq:P-p-3Dpath}
    P_+=\left\{B\in\mathrm{SL}(4,\RR)\ \vline\  B=
      \begin{pmatrix}
        \hspace{-.3cm}1 & \hspace{-.3cm} p_0 & p_1+\half p_0q_1 & p_2+\half p_0q_2 & \\
        \hspace{-.3cm}0 & \hspace{-.3cm} 1 & q_1 & q_2 &\\
        \ \ 0_{2\times 1} & \ \ 0_{2\times 1} & &\hspace{-1.9cm} \mathrm{Id}_{2\times 2}&\\
      \end{pmatrix}
    \right\}.\end{equation}

  By Theorem \ref{thm2}, the path geometry gives a splitting  $\scV=\scV_{1}\oplus\scV_{2}$. Since the structure group of a 3-dimensional path geometry is $P_0=\RR^*\times\mathrm{GL}(2,\RR),$ having a splitting reduces $P_0$ to $(\RR^*)^3$ in order for the splitting to be preserved. More precisely, one can choose 1-forms $\beta^1$ and $\beta^2$ in \eqref{eq:path-geom-cartan-conn-3D}  to have the additional coframe adaptation  $\scV_a=\langle\tfrac{\partial}{\partial s^*\beta^a}\rangle$ and $\scX=\langle\tfrac{\partial}{\partial s^*\alpha^0}\rangle$ for any section $s\colon Q\to\cG.$   Such adaptation reduces the group parameters $a_{12}$ and $a_{21}$ in \eqref{eq:P-0-3Dpath} and results in an inclusion $\iota_1\colon\cG^{(1)}\to \cG,$ where $\cG^{(1)}$ is a principal $P^{(1)}$-bundle with $P^{(1)}:=(\RR^*)^3\ltimes P_+.$ Since the group parameters $a_{12}$ and $a_{21}$ correspond to the entries $\psi^1_2$ and $\psi^2_1$ in the Cartan connection \eqref{eq:path-geom-cartan-conn-3D}, one obtains that $\iota_1*\psi^1_2\equiv\iota^*_1\psi^2_1\equiv 0$ modulo $\{\iota^*_1\alpha^0,\iota^*_1\alpha^1,\iota^*_1\alpha^2,\iota^*_1\beta^1,\iota^*_1\beta^2\},$ i.e.  
  \begin{equation}\label{eq:psi12-red-freestyling}
    \iota_1^*\psi^1_2=A^1_{2i}\alpha^i+B^1_{2a}\beta^a,\quad     \iota_1^*\psi^2_1=A^2_{1i}\alpha^i+B^2_{1a}\beta^a 
  \end{equation}
  for some functions $A^a_{bi}$ and $B^a_{bc}$ on $\cG^{(1)}.$

  Furthermore,  this adaptation  implies $[\scX,\scV_a]=\langle\tfrac{\partial}{\partial s^*\alpha^a}\rangle$.  As a result, for the rank 3 integrable distributions in Theorem \ref{thm2} one has $\scB_a=\ker \cI_a$  where $\cI_1:=\{\alpha^2,\beta^2\}$ and  $\cI_2:=\{\alpha^1,\beta^1\}.$ Using the structure equations, one immediately obtains that the integrability of $\iota_1^*\cI_{1}$ and $\iota_1^*\cI_{2}$  implies that with respect to this coframe adaptation the following torsion entries in \eqref{eq:quadric-quartic} vanish:
  \begin{equation}\label{eq:A02-zero-freestyling}  
    \iota_1^*A_0=0,\quad \iota_1^*A_2=0.
  \end{equation}
  Moreover, as a result of  differential relations arising from the integrability of $\iota_1^*\cI_{1}$ and $\iota_1^*\cI_{2},$ one obtains
  \begin{equation}\label{eq:W04-zero-freestyling}
    \iota_1^*W_0=0,\quad \iota^*_1W_4=0,\quad \iota_1^*\psi^1_2=A^1_{21}\alpha^1+B^1_{21}\beta^1,\quad \iota_1^*\psi^2_1=A^2_{12}\alpha^2+B^2_{12}\beta^2.
  \end{equation}
  By part (b) of Theorem \ref{thm2} the integrability of $\scK_1$ and $\scK_2$ are required for path geometry of Lewy curves. In order to impose these integrability conditions we first carry out a canonical reduction of $\cG^{(1)}\to Q$ to a principal $P^{(2)}$-bundle $\iota_2\colon\cG^{(2)}\hookrightarrow\cG^{(1)}$ where $P^{(2)}=(\RR^*)^3\times\RR.$ To do so we first 
  find the action of $P^{(1)}$ on the quantities $A^a_{bk}$ and $B^a_{bc}$ in \eqref{eq:W04-zero-freestyling}  defined on $\cG^{(1)}.$ In particular, using the parametrization in \eqref{eq:P-p-3Dpath} and assuming $a_{12}=a_{21}=0,$ acting by $g\in P^{(1)}$ gives
  \begin{equation}\label{eq:AB-group-action-chains}
    \begin{aligned}
      B^1_{21}(g^{-1}u)=&\tfrac{a_{22}}{a_{00}}B^1_{21}(u)+q_2\\
      B^2_{12}(g^{-1}u)=&\tfrac{a_{11}}{a_{00}}B^2_{12}(u)+q_1\\
      A^1_{21}(g^{-1}u)=&{a_{00}a_{11}a_{22}^2A^1_{21}(u)- \tfrac{a_{22}}{a_{00}}p_0B^{1}_{21}(u) -\half p_0q_2+p_2}\\
      A^2_{12}(g^{-1}u)=& a_{00}a_{11}^2a_{22}A^2_{12}(u)- \tfrac{a_{11}}{a_{00}}p_0B^2_{12}(u)-\half p_0q_1+p_1
    \end{aligned}
  \end{equation}
  Infinitesimally, these actions correspond to Bianchi identities 
  \begin{equation}
    \label{eq:AB-inf-action-Bianchies-chains}
    \begin{aligned}
      \exd B^1_{21}\equiv& (\psi^2_2-\psi^0_0)B^1_{21}+\nu_2\\
      \exd B^2_{12}\equiv& (\psi^1_1-\psi^0_0)B^2_{12}+\nu_1\\
      \exd A^1_{21}\equiv&  (\psi^0_0+\psi^1_1+2\psi^2_2)A^1_{21}-B^1_{21}\mu_0+\mu_2\\
      \exd A^2_{12}\equiv&  (\psi^0_0+2\psi^1_1+\psi^2_2)A^1_{21}-B^2_{12}\mu_0+\mu_1
    \end{aligned}
  \end{equation}
  modulo $\{\iota_1^*\alpha^0,\iota_1^*\alpha^1,\iota_1^*\alpha^2,\iota_1^*\beta^1,\iota_1^*\beta^2\}.$ As a result, the sub-bundle $\iota_2\colon\cG^{(2)}\hookrightarrow \cG^{(1)}$ given by
  \begin{equation}
    \label{eq:G3-reduced-Pp-chains}
    \cG^{(2)}=\{u\in\cG^{(1)}\ \vline\ B^1_{21}(u)=B^2_{12}(u)=A^1_{21}(u)=A^2_{12}(u)=0\}
  \end{equation}
  is well-defined as a principal $P^{(2)}$-bundle where $P^{(2)}\cong(R^*)^3\ltimes\RR$. In terms of parametrizations \eqref{eq:P-0-3Dpath} and \eqref{eq:P-p-3Dpath} for $P_{1,2}=P_0\ltimes P_+,$ one can express $P^{(2)}\subset P_{1,2}$ as  $a_{12}=a_{21}=p_1=p_2=q_1=q_2=0.$


  As a result, the integrability of Pfaffian systems
  \[\cI_3=\mathrm{Ann}\scK_1=\mathrm{Ker}\{\iota^*_2\circ\iota^*_1\alpha^0,\iota^*_2\circ\iota^*_1\alpha^1\},\quad \cI_4=\mathrm{Ann}\scK_2=\mathrm{Ker}\{\iota^*_2\circ\iota^*_1\alpha^0,\iota^*_2\circ\iota^*_1\alpha^2\}\] on $\cG^{(2)}$ is well-defined. It is matter of straightforward computation to show that if $\cI_3$ and $\cI_4$ are integrable  on $\cG^{(2)}$ then
  \begin{equation}\label{eq:W2-sign}
    \exd\alpha^0\equiv W_2\alpha^1\w\alpha^2\mod\{\alpha^0\},
      \end{equation}
  wherein we have suppressed  the pull-backs. Consequently, condition (c) in Theorem \ref{thm2} implies that on $\cG^{(2)}$ one has
  \begin{equation}\label{eq:W2-zero-freestyling}
    W_2\neq 0
  \end{equation}
  which combined with  \eqref{eq:W04-zero-freestyling}  shows that the quartic $\bC$ has at least two real roots whose multiplicity is at most two.

  Lastly, using \eqref{eq:W2-zero-freestyling}, there is a third reduction of the structure bundle to a principal $P^{(3)}$-bundle $\iota_3\colon\cG^{(3)}\hookrightarrow\cG^{(2)}$   where $P^{(3)}= (\RR^*)^2\ltimes\RR\cong B,$ and $B\subset\mathrm{GL}(2,\RR)$ is the Borel subgroup, defined as
  \begin{equation}
    \label{eq:P3-3rd-reduction-freestyling}
    \cG^{(3)}=\{u\in\cG^{(2)}\ \vline\ W_2(u)=\pm 1\}.
  \end{equation}


  The action of $P^{(2)}$ on $W_2$ is given by
  \begin{equation}
    \label{eq:W2-group-action}
    W_2(g^{-1}u)=a_{11}^2a_{22}^2W_2(u).
  \end{equation}
  Thus, depending on the sign of $W_2,$ one can normalize it to $\pm 1.$   From now on we assume   $W_2> 0$ since the case $W_2<0$ can be treated identically.   See Remark \ref{rmk:sign-of-W_2-chains} for the difference of outcome in these two cases which has to do with the sign of the induced almost para-complex structure on $\scC\subset TN.$. 
  It follows that $\cG^{(3)}\to Q$ is a principal $P^{(3)}$-bundle where $P^{(3)}=(\RR^*)^2\ltimes \RR\cong B$ and $B\subset \mathrm{GL}(2,\RR)$ is the Borel subgroup. In terms of the parametrization \eqref{eq:P-0-3Dpath}, $B\subset P$ is  given by $a_{12}=a_{21}=q_1=q_2=p_1=p_2=0$ and $a_{22}=1/a_{11}.$ 
  
  On $\cG^{(3)}$ one obtains that $\iota^*_3(\psi^2_2+\psi^1_1)$ is semi-basic with respect to the fibration $\cG^{(3)}\to Q.$  This finishes the proof where  $\cG_{red}:=\cG^{(3)}$ and $B=(\RR^*)^2\ltimes\RR$ and $\iota:=\iota_3\circ\iota_2\circ\iota_1.$ The full structure equations for the $\{e\}$-structures $\iota^*\psi$ on $\cG_{red}$ is given in \eqref{eq:freestyling-streqs} for some quantities $x_i$'s on $\cG_{red}$ where
  \[\iota^*\psi^2_2=-\iota^*\psi^1_1+4\iota^*x_{15}\alpha^0+4\iota^*x_{11}\alpha^1-4\iota^*x_{12}\alpha^2.\]

\end{proof}

\begin{remark}\label{rmk:sign-of-W_2-chains}
Note that in the case of complex para-CR structures, the reality condition for the roots of $\bC$ in Proposition \ref{prop:torsion-curv-type-freestyling-reduction} will not longer hold. Moreover,   the sign of $W_2$ determines the sign of the corresponding almost para-complex structure on  the contact distribution  $\scC=\scD_1\oplus\scD_2$ via the splitting $\scV=\scV_1\oplus\scV_2$ in 3D path geometry $(Q,\scV,\scX)$ equipped with reduction $\cG^{(2)}\to Q,$ where $\scD_i=\tau_*\langle\tfrac{\partial}{\partial\alpha^i}\rangle=\tau_*\langle [\scX,\scV_i]\rangle$.  More precisely, by \eqref{eq:W2-sign},  $W_2>0$ implies that $\scD_1$ and $\scD_2$ are $+1$ and $-1$ eigenspaces of the almost para-complex structure, respectively, and that the orientation of the solution curves of the path geometry coincides with the coorientation of the contact distribution. Changing the orientation of $\scV$, one obtains $W_2<0$ and $\scD_1$ and $\scD_2$ become $-1$ and $+1$ eigenspaces of the almost para-complex structure, respectively. As a result, locally,  the initial path geometry determines the almost para-complex structure up to a sign. See also Corollary \ref{cor:determination}
\end{remark}

\begin{remark}\label{rmk:bundle-G}
  We point out that the principal $B$-bundle $(\cG_{red}\to Q,\iota^*\psi)$ does not define a Cartan geometry, as can be seen from the non-horizontal 2-forms in the structure equation for $\exd\mu_0$ in \eqref{eq:freestyling-streqs}. However the principal $\RR^*\times B$-bundle $(\cG^{(2)}\to Q,(\iota_2\circ\iota_1)^*\psi)$ does define a Cartan geometry. In other words, the third reduction that  involves the normalization of $W_2$ to $\pm 1$ breaks the equivariancy. It is a matter of straightforward computation to show that if one requires $(\cG_{red}\to Q,\iota^*\psi)$ to be a Cartan geometry, i.e. $x_{11}=x_{12}=x_{15}=0,$ then the path structure is torsion-free, which is the case treated in Proposition \ref{prop:torsionfree-freestyling}.
\end{remark}

\begin{remark}  \label{rmk:dancing-Not-Variational}
 In the language of \cite{MS-cone}, $(\cG^{(2)}\to Q,(\iota_2\circ\iota_1)^*\psi)$ defines an \emph{orthopath geometry} since the conformal class of the 2-form $\rho=\alpha^1\w\beta^2+\alpha^2\w\beta^1\in\Omega^2(\cG^{(2)})$ is well-defined  and satisfies
  \[
    \exd\rho=-2(\psi^1_1+\psi^2_2)\w\rho+ A_1\alpha^0\w\alpha^1\w\alpha^2.
  \]
  As a result, such structures define a \emph{variational} orthopath geometry, i.e. $\exd\rho\equiv 0$ modulo $\{\rho\}$ if and only if they are torsion-free which is treated in the next section.
\end{remark}

\subsection{Path geometry of Lewy curves: torsion-free case}\label{sec:path-geometry-lewy-torsionfree}
As already mentioned in Remarks \ref{rmk:bundle-G} and \ref{rmk:dancing-Not-Variational}, torsion-free path geometries defined by Lewy curves are very special. In fact, we have the following characterization.
\begin{proposition}\label{prop:torsionfree-freestyling}
  Let $(Q,\scV,\scX)$  be a path geometry on  $N=Q/\scV$ satisfying conditions (a)-(c) of Theorem \ref{thm2}. Then the following  are equivalent:
  \begin{enumerate}
  \item The torsion of $(Q,\scV,\scX)$ is zero.
  \item The curvature of $(Q,\scV,\scX)$ has 2 distinct real roots of multiplicity 2.    
  \item $(Q,\scV,\scX)$ is locally equivalent to chains of a flat para-CR structure.
  \end{enumerate}
\end{proposition}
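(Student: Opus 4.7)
\medskip

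\textbf{Plan.} By Proposition~\ref{prop:torsion-curv-type-freestyling-reduction}, on the reduced bundle $\cG_{red}$ we have $A_0=A_2=0$, $W_0=W_4=0$, and $W_2=\pm 1$. The binary quadric $\bT$ therefore restricts to $2A_1\beta^1\beta^2$, and the binary quartic $\bC$ restricts to
\[
\beta^1\beta^2\bigl(4W_1(\beta^1)^2+6W_2\beta^1\beta^2+4W_3(\beta^2)^2\bigr).
\]
Thus condition (1) is equivalent to $A_1\equiv 0$, while condition (2) is equivalent to $W_1\equiv W_3\equiv 0$: the remaining factor is then $6W_2\beta^1\beta^2$, producing exactly the two distinct real double roots $[1{:}0]$ and $[0{:}1]$. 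For (3)$\Rightarrow$(1) I would compute directly on the representative \eqref{eq:lewy-curves-flat-3D-paraCR}, that is $F^1=0$ and $F^2=-2(p^2)^2/(p^1-z^2)$, which by Proposition~\ref{prop:lewy-curves-chains} defines the path geometry of chains of the flat para-CR 3-manifold. Plugging into the explicit formulas \eqref{eq:fels-invariants-explicit}--\eqref{eq:fels-torsion-explicit-components} a short direct calculation yields $T^i_j\equiv 0$, so in particular $A_1\equiv 0$.

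The core algebraic step is (1)$\Leftrightarrow$(2). I would establish it by differentiating the reduction identities $A_0=A_2=0$ and $W_0=W_4=0$ on $\cG_{red}$ and comparing against the Bianchi identities \eqref{eq:W-A-curvature-torsion-Bianchies} together with the full structure equations \eqref{eq:freestyling-streqs} from the appendix. The crucial observation is that on $\cG_{red}$ the connection forms $\psi^1_2$ and $\psi^2_1$ are semibasic, see \eqref{eq:psi12-red-freestyling} and the subsequent reductions. Consequently the Bianchi formulas for $\exd W_1$, $\exd W_3$, $\exd A_0$, $\exd A_2$ reduce modulo semibasic to identities that, once combined with the semibasic remainders in \eqref{eq:freestyling-streqs}, yield linear algebraic couplings of the form $W_1=c\,A_1$ and $W_3=c'\,A_1$ with non-zero constants determined by $W_2=\pm 1$. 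These equivalences give (1)$\Leftrightarrow$(2).

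For (1)$\Rightarrow$(3) I would invoke Remark~\ref{rmk:dancing-Not-Variational}: the vanishing $A_1\equiv 0$ is precisely the condition for the orthopath geometry $(\cG^{(2)}\to Q,(\iota_2\circ\iota_1)^*\psi)$ to be \emph{variational}, i.e.\ $\exd\rho\equiv 0\pmod{\{\rho\}}$. Variational orthopath geometries are known (cf.\ \cite{MS-cone} and \cite{KM-chains}) to arise locally as chain path geometries of some (para-)CR 3-manifold. Hence $(Q,\scV,\scX)$ is locally the chain path geometry of \emph{some} para-CR structure, while by hypothesis it is also the Lewy curve path geometry of $(N,\scD_1,\scD_2)$. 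Applying Theorem~\ref{cor:3d-path-geometries-dancing-chains} then forces $(N,\scD_1,\scD_2)$ to be flat, which is (3). The main obstacle is the algebraic step (1)$\Leftrightarrow$(2): the Bianchi identities \eqref{eq:W-A-curvature-torsion-Bianchies} only pin down $\exd W_1$ and $\exd W_3$ modulo semibasic, so one must descend into the semibasic remainders of \eqref{eq:freestyling-streqs} and track the auxiliary functions $x_i$ introduced there in order to isolate the coupling to $A_1$; the fact that the integrability of $\scK_1,\scK_2$ from Theorem~\ref{thm2} is already built into the reduction is what ultimately makes this bookkeeping tractable.
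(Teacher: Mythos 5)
Your reduction to $A_1\equiv 0$ versus $W_1\equiv W_3\equiv 0$ on $\cG_{red}$ is the right starting point, and your direct computation for $(3)\Rightarrow(1)$ is fine. But the core step $(1)\Leftrightarrow(2)$ rests on a claim that is false: there is no algebraic coupling $W_1=c\,A_1$, $W_3=c'\,A_1$ with non-zero constants. On $\cG_{red}$ the actual relations are differential, namely $W_1=\tfrac{1}{6}A_{1;\underline{1112}}$ and $W_3=-\tfrac{1}{6}A_{1;\underline{2221}}$ (fourth-order coframe derivatives of $A_1$). These give $(1)\Rightarrow(2)$ immediately, since derivatives of the zero function vanish, but they make the converse genuinely hard: $W_1=W_3=0$ only says that certain fourth derivatives of $A_1$ vanish, and one must then run the identities $\exd^2=0$ on the structure equations \eqref{eq:freestyling-streqs} through several iterations of differential consequences to force $A_1=0$. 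If your algebraic coupling were true, $(2)\Rightarrow(1)$ would be a one-line consequence; the fact that it is not is exactly why this direction carries the real content of the proposition, and your proposal leaves it unproved.

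The implication $(1)\Rightarrow(3)$ as you argue it is also problematic on two counts. First, the hypothesis of the proposition is only conditions (a)--(c) of Theorem \ref{thm2}, so you may not assume the structure is "by hypothesis the Lewy curve path geometry of $(N,\scD_1,\scD_2)$"; that requires condition (d). Second, Theorem \ref{cor:3d-path-geometries-dancing-chains} is itself deduced from this very proposition, so invoking it here is circular. The non-circular route is to use $(1)\Rightarrow(2)$ to get the root structure of $\bC$, verify directly that all the conditions of the characterization of chains in \cite{KM-chains} hold (being variational, i.e.\ $\exd\rho\equiv 0$ mod $\{\rho\}$, is one of several conditions there, not by itself sufficient), and then appeal to the known fact that chains of a (para-)CR 3-manifold form a torsion-free path geometry if and only if the underlying structure is flat.
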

\begin{proof}
  Let us show $(1)\to (2).$  Using the  $\{e\}$-structure obtained in Proposition \ref{prop:torsion-curv-type-freestyling-reduction} and its structure equations \eqref{eq:freestyling-streqs}, it follows that there is only one fundamental invariant which is the structure function $A_1.$ Moreover, using our convention \ref{sec:conventions} for coframe differentiation, one obtains  differential relations
  \begin{equation}
    \label{eq:W13-A1}
    W_3=-\tfrac{1}{6}A_{1;\underline{2221}},\quad W_1=\tfrac{1}{6}A_{1;\underline{1112}}.
  \end{equation}
  Thus,  $A_1=0$ implies $W_1=W_3=0,$ which combined with $W_0=W_4=0$ from \eqref{eq:W04-zero-freestyling}, gives (2).

  To show $(2)\to (1)$ we note that the conditions $W_1=W_3=0$ are invariantly defined on $\cG_{red}.$ Assuming  $W_1=W_3=0$, it is a matter of tedious but straightforward calculations to show that the identities arising from $\exd^2=0$ and their resulting differential consequences  (in fact three iterations are needed) imply $A_1=0.$  

  Showing $(1)\leftrightarrow (3)$ is done using elementary computations and the characterization of chains in  \cite[Theorem 1.1]{KM-chains}. Starting from (1), one knows from (2) that  the curvature has two distinct real roots of multiplicity 2 and   can check that all necessary and sufficient conditions in \cite[Theorem 1.1]{KM-chains} are satisfied, e.g. the 2-form $\rho$ in Remark \ref{rmk:dancing-Not-Variational} on $\cG^{(3)}\to N$ is closed. Furthermore, by \cite{CZ-CR,KM-chains}, chains of a para-CR 3-manifold are torsion-free if and only if the para-CR structure is flat.  Conversely, starting from (3), by Example \ref{exa:corr-syst-odes-flat-paraCR} and Proposition \ref{prop:lewy-curves-chains}, the point equivalence class of pair of ODEs for chains and Lewy curves in flat para-CR 3-manifolds coincide, which, as mentioned before, define torsion-free path geometries.
\end{proof}

\begin{theorem}\label{cor:3d-path-geometries-dancing-chains}
A path geometry defined by Lewy curves of a 3-dimensional para-CR structure  arises as chains of some (para-)CR 3-manifold if and only if  it is flat, in which case its Lewy curves and chains coincide. 
\end{theorem}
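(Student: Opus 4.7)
The plan is to handle the two implications separately. The $(\Leftarrow)$ direction is essentially a repackaging of already-derived ODE descriptions: if $(N,\scD_1,\scD_2)$ is flat, then by Example \ref{exa:corr-syst-odes-flat-paraCR} its Lewy curves are solutions of the pair \eqref{eq:lewy-curves-flat-3D-paraCR}, and by Remark \ref{rmk:path-geometry-lewy-remark1} (together with Proposition \ref{prop:lewy-curves-chains} specialized to $n=1$) this same point-equivalence class of pairs of ODEs describes chains of the flat para-CR 3-manifold. Hence the two path geometries coincide, and the Lewy path geometry arises as chains of a (para-)CR 3-manifold, namely of $(N,\scD_1,\scD_2)$ itself.

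For $(\Rightarrow)$, suppose the path geometry $(Q,\scV,\scX)$ defined by Lewy curves of $(N,\scD_1,\scD_2)$ is also realized as the chain path geometry of some (para-)CR 3-manifold. The algebraic type of the curvature binary quartic $\bC$ introduced in Proposition \ref{prop:3D-path-geom} is a point-equivalence invariant of the path geometry, hence it must simultaneously obey the Lewy constraint of Proposition \ref{prop:torsion-curv-type-freestyling-reduction}, namely that $\bC$ admits at least two distinct real roots of multiplicity at most two, and the chain constraint listed in \cite[Theorem 1.1(1)]{KM-chains}. A case-by-case comparison of these two algebraic types as $\mathrm{GL}(2,\RR)$-modules shows that the only common algebraic type is the one in which $\bC$ has exactly two distinct real roots, each of multiplicity two.

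By the equivalence $(1)\Leftrightarrow(2)\Leftrightarrow(3)$ of Proposition \ref{prop:torsionfree-freestyling}, this forces $\bT=0$ and shows that $(Q,\scV,\scX)$ is locally equivalent to chains of a flat para-CR structure; combined with the $(\Leftarrow)$ direction this means $(Q,\scV,\scX)$ coincides with the Lewy path geometry of a flat para-CR 3-manifold. Invoking Corollary \ref{cor:determination}, the Lewy path geometry determines $(N,\scD_1,\scD_2)$ up to the sign of its almost para-complex structure, and since flatness is manifestly preserved under the swap $\scD_1\leftrightarrow\scD_2$, the original para-CR structure $(N,\scD_1,\scD_2)$ must be flat. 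In the flat case the coincidence of Lewy curves and chains is already established in the $(\Leftarrow)$ step, which completes the proof.

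The main obstacle is the comparison step: one needs to verify carefully that the algebraic type of $\bC$ allowed by Proposition \ref{prop:torsion-curv-type-freestyling-reduction} intersects the chain algebraic type of \cite[Theorem 1.1(1)]{KM-chains} only at the specific quartic type of two double real roots. Everything else is either immediate from the preceding propositions or reduces to invoking Corollary \ref{cor:determination}, both of which have been set up precisely for this kind of argument.
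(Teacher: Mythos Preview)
Your proof is correct and follows the same route as the paper: compare the algebraic type of $\bC$ imposed by Proposition \ref{prop:torsion-curv-type-freestyling-reduction} with that for (para-)CR chains from \cite[Theorem 1.1]{KM-chains}, observe that the only overlap is at two distinct real double roots, and apply Proposition \ref{prop:torsionfree-freestyling}. Your additional appeal to Corollary \ref{cor:determination} makes explicit a step the paper leaves implicit (that the \emph{original} para-CR structure, not merely the one whose chains are matched, is flat); this is a forward reference but not circular, since Corollary \ref{cor:determination} does not depend on the present theorem.
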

\begin{proof}
This follows from Proposition \ref{prop:torsionfree-freestyling} and the fact that the curvature of 3-dimensional path geometry of chains of a (para-)CR structure always has two distinct real roots of multiplicity two (see \cite[Theorem 1.1]{KM-chains}). Since the roots are real, Lewy curves can only coincide with chains of a flat para-CR 3-manifold. 
\end{proof}

\subsection{Path geometry of Lewy curves: non-zero torsion case}\label{sec:non-trivial-torsion}
Having treated the case of torsion-free path geometries, we proceed to the case of non-zero torsion. Note that in the statements in the section, the reality conditions on the roots of torsion and curvature of 3-dimensional path geometries only hold in the real setting.
\begin{proposition}\label{thm3}
A 3-dimensional path geometry with non-zero torsion satisfies conditions (a)-(c) in Theorem \ref{thm2} if and only if 
  \begin{enumerate}
  \item The torsion $\bT$  has two distinct real roots and the quartic $\bC$ has at least two distinct real roots and at most one real root of multiplicity two.
  \item  There is a reduction $\iota\colon\cG_{red}\to\cG$ characterized  as in Proposition \ref{prop:torsion-curv-type-freestyling-reduction} such that the Pfaffian systems $\iota^*\cI_1,\iota^*\cI_2,\iota^*\cI_3,\iota^*\cI_4$ are integrable where
    \begin{equation}
      \label{eq:I_1234-freestyling}
      \cI_1=\{\alpha^1,\beta^1 \},\quad \cI_2=\{\alpha^2,\beta^2\},\quad \cI_3=\{\alpha^1,\alpha^0 \},\quad \cI_4=\{\alpha^2,\alpha^0\}.
    \end{equation}
  \end{enumerate}
Defining $\tilde M_2,\tilde M_1,M_1,M_2$ to be the local leaf spaces of $\cI_1,\cI_2,\cI_3,\cI_4,$ respectively, the corresponding double fibrations
  \begin{equation}
    \label{eq:N-respective-fibrations}
    M_1\leftarrow N\rightarrow M_2,\qquad M_1 \leftarrow  N\rightarrow \tilde M_2,\qquad \tilde M_1\leftarrow   N\rightarrow M_2,
  \end{equation}
define  para-CR structures  $(N,\scD_1,\scD_2),$ $(N,\tilde\scD_1,\scD_2)$ and $(N,\scD_1,\tilde \scD_2)$ as in Proposition \ref{prop:freestyle}. Moreover, the fundamental invariants of the path geometry  $(N,\scD_1,\scD_2)$ depend on the 6th jet of $A_1$ and the fundamental invariants of the other two path geometries depend on the 5th jet of $A_1.$   
\end{proposition}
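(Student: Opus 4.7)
The plan is to translate the geometric conditions (a)--(c) of Theorem \ref{thm2} into Cartan-geometric conditions by running the reduction procedure of Proposition \ref{prop:torsion-curv-type-freestyling-reduction} and then identifying the resulting para-CR data. The key observation is that when $\bT \neq 0$, the splitting $\scV = \scV_1 \oplus \scV_2$ required by Theorem \ref{thm2} is \emph{forced}: by Remark \ref{remark:torsion}, viewing $\bT$ as an endomorphism $\scV \to \scV$, the summands $\scV_1,\scV_2$ must be its eigenlines. A non-zero binary quadric $\bT$ admits a splitting into two distinct real eigenlines if and only if it has two distinct real roots, which gives the first half of condition (1). Having fixed this canonical splitting, I would adapt a coframe so that $\scV_a = \langle \partial/\partial\beta^a \rangle$, thereby initiating the chain of reductions in Proposition \ref{prop:torsion-curv-type-freestyling-reduction}.

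For the forward direction, I would translate each of (a)--(c) of Theorem \ref{thm2} into the integrability or non-degeneracy of the appropriate Pfaffian systems on the reduced bundle. Condition (a) (integrability of $\scB_1,\scB_2$) is exactly the integrability of $\iota^*\cI_2 = \{\alpha^2,\beta^2\}$ and $\iota^*\cI_1 = \{\alpha^1,\beta^1\}$, which the computation in Proposition \ref{prop:torsion-curv-type-freestyling-reduction} shows forces $A_0 = A_2 = 0$ and $W_0 = W_4 = 0$. Condition (b) (integrability of $\scK_1,\scK_2$) is the integrability of $\iota^*\cI_3 = \{\alpha^0,\alpha^1\}$ and $\iota^*\cI_4 = \{\alpha^0,\alpha^2\}$. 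Condition (c) (contact) then forces the normalization $W_2 = \pm 1$ via \eqref{eq:W2-sign}. Since $\bT \neq 0$ by hypothesis, $A_1 \neq 0$, and by Proposition \ref{prop:torsionfree-freestyling} the excluded algebraic type of $\bC$ (namely two double real roots) would force $A_1 = 0$, a contradiction. This yields the remaining part of (1) and all of (2). The backward direction is then a direct reversal: assumption (1) provides the splitting $\scV = \scV_1 \oplus \scV_2$ from the eigenlines of $\bT$, and the integrability assumptions in (2) deliver conditions (a), (b), (c) of Theorem \ref{thm2} verbatim.

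To produce the three para-CR structures, I would define $\tilde M_2, \tilde M_1, M_1, M_2$ as local leaf spaces of $\iota^*\cI_1,\iota^*\cI_2,\iota^*\cI_3,\iota^*\cI_4$ respectively (using that each system is equivariantly closed on $\cG_{red}$ and therefore descends to $N$). Each pair of leaf spaces yields a double fibration over $N$ whose kernel line fields are $\scD_1,\scD_2,\tilde\scD_1,\tilde\scD_2$; these are precisely the distributions constructed in Proposition \ref{prop:freestyle}, and the three pairings in \eqref{eq:N-respective-fibrations} produce the stated three para-CR structures.

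The last assertion about jet orders of the fundamental invariants is the main obstacle. Using the structure equations \eqref{eq:freestyling-streqs} on $\cG_{red}$, the sole primary invariant is $A_1$, so every other invariant is expressible as an iterated coframe derivative of $A_1$. For each para-CR structure, one must find the sub-bundle of $\cG_{red}$ adapted to its defining distributions, compute the canonical normal Cartan connection of Theorem \ref{thm:2D-path-geome} upstairs (since para-CR $3$-manifolds coincide with $2$D path geometries), and read off the two fundamental invariants $P_1, Q_1$. The para-CR structure $(N,\scD_1,\scD_2)$ uses both projections $\pi_1,\pi_2$ coming from the canonical splitting encoded in $\bT$, whereas the other two mix one canonical projection with one secondary projection coming from $\tilde M_1$ or $\tilde M_2$; the latter pass through one fewer level of the prolongation, explaining the asymmetric jet orders $6$ and $5$. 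In practice the computation reduces to successive differentiation of $A_1$ along the dual frame of \eqref{eq:freestyling-streqs} together with use of the Bianchi identities \eqref{eq:W-A-curvature-torsion-Bianchies} and the relations \eqref{eq:W13-A1}; this is tedious but routine, and is most efficiently verified with a computer algebra system.
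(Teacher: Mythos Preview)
Your proposal is correct and follows essentially the same approach as the paper: both invoke Proposition~\ref{prop:torsion-curv-type-freestyling-reduction} and Proposition~\ref{prop:torsionfree-freestyling} to translate conditions (a)--(c) of Theorem~\ref{thm2} into the integrability of $\iota^*\cI_1,\dots,\iota^*\cI_4$ together with the algebraic constraints on $\bT$ and $\bC$, and both verify the jet orders of the fundamental invariants by direct computation in the structure equations \eqref{eq:freestyling-streqs}. The only caveat is that your heuristic explanation for the asymmetry between the $6$th and $5$th jet orders (``one fewer level of prolongation'') is not how the paper justifies it: the paper simply writes down the explicit Cartan connection \eqref{eq:UHF-1-2Dpath-Cartan-Conn} for each of the three $2$-dimensional path geometries and reads off $P_1,Q_1$ as explicit polynomials in coframe derivatives of $A_1$, e.g.\ $Q_1=\tfrac{2}{9}A_{1;\underline{11121}}$ for the structure $(N,\tilde\scD_1,\scD_2)$, so your conceptual gloss should be dropped in favor of the direct computation you already indicate.
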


\begin{proof}
The proof of the first part of the proposition directly follows from  Theorem \ref{thm2} and Propositions \ref{prop:torsion-curv-type-freestyling-reduction} and \ref{prop:torsionfree-freestyling}. As a result, the surfaces $\tilde M_2,\tilde M_1,M_1$ and $M_2,$ corresponding to para-CR structures  in Proposition \ref{prop:freestyle}, are the leaf spaces of $\cI_1,\cI_2,\cI_3$ and $\cI_4$, respectively. The last part of the proposition is shown by direct computation using  structure equations \eqref{eq:freestyling-streqs} as outlined below.

Let us define $N$ to be the leaf space of the Pfaffian system  $\{\alpha^0,\alpha^1,\beta^1\}$ with double fibration $M_1\leftarrow N\rightarrow \tilde M_2.$ By structure equations \eqref{eq:freestyling-streqs} one has
  \[
    \begin{aligned}
      \exd\alpha^1&\equiv -(\phi^1_1+2\phi^2_2)\w\alpha^1+\alpha^0\w\beta^1\\
      \exd\alpha^0&\equiv -(2\phi^2_2+\phi^1_1)\w\alpha^0\mod\{\alpha^1\}\\
      \exd\beta^1&\equiv -(\phi^2_2-\phi^1_1)\w\beta^1\mod \{\alpha^1\}.
    \end{aligned}
  \]
  where
  \begin{equation}
    \label{eq:psi1-2-UHF-1}
    \phi^1_1=\tfrac 43(x_{11}\alpha^1-x_{12}\alpha^2+x_{15}\alpha^0)+\psi^0_0-\tfrac 13\psi^1_1\qquad \phi^2_2=\tfrac 43(x_{11}\alpha^1-x_{12}\alpha^2+x_{15}\alpha^0)+\tfrac 23\psi^1_1.
      \end{equation}
Using the structure equations \eqref{eq:freestyling-streqs}, the Cartan connection \eqref{eq:2D-path-geom-cartan-conn} for the induced 2-dimensional path geometry on the 2-dimensional leaf space $M_1$ of the Pfaffian system $\{\alpha^1,\alpha^0\}$ is given below wherein  $\phi^1_1$ and $\phi^2_2$ are given as in \eqref{eq:psi1-2-UHF-1} and
  \begin{equation}
    \label{eq:UHF-1-2Dpath-Cartan-Conn}
      \def\arraystretch{1.3}
    \phi=
    \begin{pmatrix}
      -\phi^1_1-\phi^2_2 & \theta_2 &\theta_0 \\
      \alpha^0 & \phi^1_1 & \theta_1\\
      \alpha^1 & \beta^1 & \phi^2_2
    \end{pmatrix}\qquad 
    \left\{
    \begin{array}{ll}
          \theta_2=-x_{18}\alpha^2-\tfrac 53x_4\alpha^1-A_1\alpha^0+\mu_0 ,\\
          \theta_1=\tfrac 43 W_1\alpha^1+\alpha^2,\\
          \theta_0=8x_{15}\alpha^2-\tfrac 83 x_{14}\alpha^1-\tfrac{16}{3}x_4\alpha^0-\tfrac 43W_1\beta^1-\beta^2,
        \end{array}\right.
      \end{equation}
      and the fundamental invariants  are
      \begin{equation}
        \label{eq:T1-C1-tM-T}
        P_1=\tfrac 43 A_1A_{1;\underline{121}}-\tfrac 49A_{1;\underline 1}A_{1;\underline{12}}+2A_{1;1}-\tfrac 23A_{1;\underline 1 0},\qquad Q_1=\tfrac 29A_{1;\underline{11121}}
      \end{equation}
 which implies $P_1$ and $Q_1$ depend on the 3rd and 5th jets of $A_1,$ respectively.
      
Similarly, the leaf space of $\{\alpha^0,\alpha^2,\beta^2\}$ corresponds to the  2D path geometry on the 2-dimensional leaf space $M_2$ of the Pfaffian system $\{\alpha^0,\alpha^2\}.$ The Cartan connection \eqref{eq:2D-path-geom-cartan-conn} for this 2-dimensional path geometry can be obtained analogously wherein $\omega^0=\alpha^2,$$\omega^1=\alpha^0,$ $\omega^2=\beta^2$ and its fundamental invariants are given as
      \[P_1=-\tfrac 43A_1A_{1;\underline{212}}+\tfrac 23A_{1;\underline{2}}A_{1;\underline{21}} -2A_{1;2}+\tfrac 23 A_{2;\underline 2 0},\qquad Q_1=\tfrac 29 A_{1;\underline{22212}}.\]
      Lastly, as was shown previously, the leaf space of the Pfaffian system $\{\alpha^0,\alpha^1,\alpha^2\}$ corresponds to a 2-dimensional path geometry on the 2-dimensional leaf space of $\{\alpha^0,\alpha^1\}$ for which one can find the Cartan connection \eqref{eq:2D-path-geom-cartan-conn} wherein $\omega^0=\alpha^0,$  $\omega^1=\alpha^1$ and $\omega^2=\alpha^2.$ The expression for the fundamental invariants of this path geometry are much longer than the other two cases and will not be provided here. However, it is can be checked that the fundamental invariants $P_1$ and $Q_1$ in this case depend on the 6th jet of $A_1.$
\end{proof}

As discussed in Proposition \ref{prop:torsionfree-freestyling}, if $\bC$ has two distinct real roots of multiplicity two then the path geometry arises as chains of the flat para-CR geometry. In order to rule out the possibility of repeated real roots when torsion is nonzero, we first show the following wherein para-CR structures are viewed as 2-dimensional path geometries, as explained in Remark ~\ref{rmk:para-cr-structures-2D-path}.  
\begin{proposition}\label{prop:3d-path-geometries-dancing-projective}
Given a path geometry satisfying conditions (a)-(c) in Theorem \ref{thm2}, if the curvature has a repeated root of multiplicity 2 then the 2D path geometry induced on $M_1$ is either a projective or co-projective structure, as defined in Definition \ref{def:co-proj-surf}.  
\end{proposition}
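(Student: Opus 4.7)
My strategy is to work on the reduced bundle $\cG_{red}$ from Proposition \ref{prop:torsion-curv-type-freestyling-reduction}, where $W_0=W_4=0$ and $W_2=\pm 1$, so the binary quartic factors as
\[
\bC=\beta^1\beta^2\bigl(4W_1(\beta^1)^2+6W_2\beta^1\beta^2+4W_3(\beta^2)^2\bigr).
\]
A direct computation of its classical discriminant (as a binary quartic with $a=e=0$, $b=W_1$, $c=W_2$, $d=W_3$) yields $4(W_1W_3)^2(9-16W_1W_3)$ up to a constant, so a root of multiplicity $2$ occurs precisely when one of the following holds: (i) $W_1\equiv 0$, (ii) $W_3\equiv 0$, or (iii) $W_1,W_3\ne 0$ and $16W_1W_3=9$ (the discriminant of the inner quadratic factor). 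I would treat these three cases separately and show that each forces either $Q_1=0$ or $P_1=0$ for the 2D path geometry on $M_1$ in the sense of the formulas \eqref{eq:T1-C1-tM-T}.

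In case (i), $W_1\equiv 0$, the plan is to differentiate this identity along the coframe using the Bianchi identity~\eqref{eq:W-A-curvature-torsion-Bianchies}; since on $\cG_{red}$ one has $W_0=0$ and $\psi^1_2=\psi^2_1=0$, the semi-basic components of $\exd W_1$ produce an explicit expression for $W_{1;\underline{1}}$ in terms of coframe derivatives of $A_1$. Comparing with $Q_1=\tfrac{2}{9}A_{1;\underline{11121}}$ from~\eqref{eq:T1-C1-tM-T} should yield $Q_1=0$, so $M_1$ is projective. Symmetrically, in case (ii), $W_3\equiv 0$, a single differentiation produces an expression proportional to $A_{1;\underline{22212}}$, which, from the proof of Proposition~\ref{thm3}, equals $\tfrac{9}{2}Q_1$ for the 2D path geometry on $M_2$; this shows $M_2$ is projective, and by the duality between the 2D path geometries on $M_1$ and $M_2$ (cf. Definition \ref{def:duality-para-CR} and the discussion after Definition \ref{def:co-proj-surf}), this is precisely the statement that $P_1=0$ on $M_1$, i.e. $M_1$ is co-projective.

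The third case $16W_1W_3=9$ is the genuinely intermediate one. Here I would differentiate the algebraic identity $16W_1W_3-9=0$ twice along the coframe and combine this with the Bianchi identities for $W_1$ and $W_3$ to obtain a linear combination of $A_{1;\underline{11121}}$ and $A_{1;\underline{22212}}$ that must vanish, iterating $\exd^2=0$ on the structure equations~\eqref{eq:freestyling-streqs} as necessary -- in the spirit of the three-step prolongation used in the $(2)\Rightarrow(1)$ direction of Proposition~\ref{prop:torsionfree-freestyling}. An alternative attack uses the residual Borel freedom on $\cG_{red}$: since $16W_1W_3=9$ means the quadratic factor of $\bC$ is a perfect square, the $B$-action can be exploited to normalize $W_1=\pm W_3$, after which the symmetry between the two factors of $\bC$ should collapse the configuration onto one of cases (i) or (ii).

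The principal obstacle I expect is case (iii). Cases (i) and (ii) are forced by a single round of differentiation of an already-explicit identity together with the dual formula on $M_2$, whereas the discriminant-vanishing case requires careful bookkeeping of several prolongations on $\cG_{red}$ and it is a priori unclear which of $P_1=0$ or $Q_1=0$ emerges. It is moreover conceivable that case (iii) cannot actually occur for a path geometry satisfying (a)-(c) with $\bT\ne 0$, in which case the proposition reduces to ruling this configuration out -- a verification that can only be completed by working through the full structure equations and Bianchi identities collected in the appendix, most efficiently with a symbolic computation package.
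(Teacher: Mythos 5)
Your reduction to the three cases $W_1=0$, $W_3=0$, and $16W_1W_3=9W_2^2$ is correct, and your discriminant computation $(W_1W_3)^2(9-16W_1W_3)$ is right. Indeed, the paper's own proof is a one-line appeal to ``direct computation by setting either $W_1=0$ or $W_3=0$'', so your observation that the quadratic factor $4W_1(\beta^1)^2+6W_2\beta^1\beta^2+4W_3(\beta^2)^2$ can acquire a double root away from $\beta^1=0$ and $\beta^2=0$ identifies a case the paper silently excludes (and excludes again in Corollary \ref{cor:3d-path-geometries-dancing}, where repeated roots are asserted to lie ``at zero or at infinity''). Nevertheless your proposal has two genuine gaps. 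First, in cases (i) and (ii) what you actually prove concerns the \emph{auxiliary} structures of Proposition \ref{prop:freestyle}: $W_1\equiv 0$ gives $A_{1;\underline{11121}}=6W_{1;\underline 1}=0$, hence $Q_1=0$ in \eqref{eq:T1-C1-tM-T}, which says that $(N,\tilde\scD_1,\scD_2)$ induces a projective structure on $M_1$; likewise $W_3\equiv 0$ makes $(N,\scD_1,\tilde\scD_2)$ projective on $M_2$. The proposition, as it is used in Corollary \ref{cor:3d-path-geometries-dancing}, is about the path geometry $(N,\scD_1,\scD_2)$ itself on $M_1=N/\scD_2$, whose invariants depend on the sixth jet of $A_1$ and are not given by \eqref{eq:T1-C1-tM-T}; your duality step in case (ii) conflates $(N,\scD_1,\tilde\scD_2)$ with $(N,\scD_1,\scD_2)$ --- the dual of the former lives on $\tilde M_1$, not on $M_1$. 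Establishing (co-)projectivity of $(N,\scD_1,\scD_2)$ is precisely the $\exd^2=0$ prolongation computation the paper's proof invokes, and your proposal does not carry it out.

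Second, case (iii) remains open, and one of your two proposed attacks cannot work: on $\cG_{red}$ the Bianchi identities \eqref{eq:W-A-curvature-torsion-Bianchies} reduce to $\exd W_1\equiv 2\psi^1_1 W_1$ and $\exd W_3\equiv -2\psi^1_1 W_3$ modulo semi-basic forms, so $W_1W_3$ is invariant under the residual Borel group; one may normalize $W_1=\pm W_3$ when both are nonzero, but no gauge choice collapses the configuration onto $W_1=0$ or $W_3=0$. Thus either $16W_1W_3=9W_2^2$ must be shown incompatible with conditions (a)--(c) (and nonzero torsion) via the structure equations \eqref{eq:freestyling-streqs}, or it must be treated as a separate case; your proposal does neither and says so. Until that is resolved the proof is incomplete --- though it is worth recording that the paper's stated argument is silent on exactly the same point.
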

\begin{proof}
Similarly to the proof  $(2)\to (1)$ in Proposition \ref{prop:torsionfree-freestyling}, the proof is  done by direct computation by setting either $W_1=0$ or $W_3=0$ and checking $\exd^2=0$ and its differential consequences.
\end{proof}

Now we can state a Cartan geometric version of Theorem \ref{thm2}.
\begin{corollary}\label{cor:3d-path-geometries-dancing}
A 3D path geometry with non-zero torsion is defined by Lewy curves of a para-CR structure if and only if 
  \begin{enumerate}
  \item The quadric $\bT$  has two distinct real roots and the quartic $\bC$ has  at least two distinct real roots and no repeated roots.
  \item  After the natural reduction $\iota\colon\cG_{red}\to\cG$ described in   Proposition \ref{prop:torsion-curv-type-freestyling-reduction}, the Pfaffian systems $\iota^*\cI_1,\iota^*\cI_2,\iota^*\cI_3,\iota^*\cI_4$ given in \eqref{eq:I_1234-freestyling} are integrable 
  \item   The triple of para-CR geometries $(N,\scD_1,\scD_2),$ $(N,\tilde\scD_1,\scD_2)$ and $(N,\scD_1,\tilde \scD_2)$ in Proposition \ref{thm3} with  double fibrations  \eqref{eq:N-respective-fibrations} are equivalent. 
\end{enumerate}
\end{corollary}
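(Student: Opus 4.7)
The plan is to derive the corollary by combining Theorem \ref{thm2}, Proposition \ref{thm3}, and Proposition \ref{prop:3d-path-geometries-dancing-projective}, treating each direction of the biconditional separately and focusing on the refinement of the root condition on $\bC$.

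For the forward implication, assume $(Q,\scV,\scX)$ is defined by para-CR Lewy curves. Theorem \ref{thm2} then guarantees conditions (a)-(d) of that theorem. In particular, conditions (a)-(c) together with the standing nonzero-torsion hypothesis place us exactly in the setting of Proposition \ref{thm3}, which supplies condition (2) (the integrability of $\iota^*\cI_1,\ldots,\iota^*\cI_4$ after the canonical reduction from Proposition \ref{prop:torsion-curv-type-freestyling-reduction}) along with the partial form of (1): $\bT$ has two distinct real roots and $\bC$ has at least two distinct real roots with at most one of multiplicity two. Condition (d) of Theorem \ref{thm2} is precisely condition (3). The remaining task is to exclude a repeated root of $\bC$ altogether.

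To rule out the remaining repeated root, I would argue by contradiction. Suppose $\bC$ has a real root of multiplicity two, so without loss of generality $W_1=0$ or $W_3=0$. By Proposition \ref{prop:3d-path-geometries-dancing-projective}, the induced 2D path geometry on $M_1$ then becomes projective or co-projective, which via \eqref{eq:T1-C1-tM-T} translates into the vanishing of $A_{1;\underline{11121}}$ (respectively a polynomial expression in lower-order derivatives of $A_1$). Condition (3) forces the three para-CR structures constructed in Proposition \ref{thm3} to coincide, so the analogous invariants of the 2D path geometries on $M_2$, $\tilde M_1$ and $\tilde M_2$, which by Proposition \ref{thm3} are expressed through different coframe derivatives of $A_1$ (of orders up to six), must vanish in parallel. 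I expect that feeding these vanishings back through the structure equations \eqref{eq:freestyling-streqs} and iterating $\exd^2=0$ propagates them until $A_1\equiv 0$, contradicting the assumption of nonzero torsion.

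For the backward implication, the reasoning runs in reverse with essentially no new input. Conditions (1) and (2) clearly subsume the hypotheses of Proposition \ref{thm3}, which therefore yields conditions (a)-(c) of Theorem \ref{thm2}; condition (3) is exactly (d) of Theorem \ref{thm2}. Hence Theorem \ref{thm2} applies and $(Q,\scV,\scX)$ is defined by Lewy curves of a para-CR structure. The main obstacle is the exclusion of the repeated-root case in the forward direction: while Proposition \ref{prop:3d-path-geometries-dancing-projective} localizes the degeneration on one of the three associated 2D path geometries, producing the final contradiction requires combining this with the matching imposed by condition (3) on the other two and then exploiting the differential Bianchi identities for $A_1$ — a bookkeeping exercise that, although routine in principle on the $\{e\}$-structure of Proposition \ref{prop:torsion-curv-type-freestyling-reduction}, is the computational heart of the statement.
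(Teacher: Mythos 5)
Your overall architecture matches the paper's: both directions reduce to Theorem \ref{thm2} via Proposition \ref{thm3}, and the only genuine content is excluding a repeated root of $\bC$ when the torsion is non-zero. The backward implication as you present it is fine. The problem is the forward direction: your exclusion of the repeated root ends with ``I expect that feeding these vanishings back through the structure equations and iterating $\exd^2=0$ propagates them until $A_1\equiv 0$,'' which is precisely the step that needs to be proved, not conjectured. Note that the analogous computation in Proposition \ref{prop:torsionfree-freestyling} (that $W_1=W_3=0$ forces $A_1=0$) uses \emph{both} vanishings; here you only have one of $W_1,W_3$ equal to zero, so that propagation argument does not apply as stated, and it is not clear that your list of ``analogous invariants vanishing in parallel'' closes up under $\exd^2=0$ without further input.

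The paper closes this gap with a short conceptual argument that your sketch misses. Say $W_1=0$ (so $W_0=W_1=W_4=0$). Two distinct facts are then used: (i) by \eqref{eq:W13-A1} and \eqref{eq:T1-C1-tM-T}, $Q_1=\tfrac29 A_{1;\underline{11121}}$ vanishes, so the double fibration $M_1\leftarrow N\rightarrow\tilde M_2$ coming from $(N,\tilde\scD_1,\scD_2)$ induces a \emph{projective} structure on $M_1$; and (ii) by the direct computation underlying Proposition \ref{prop:3d-path-geometries-dancing-projective}, the fibration $M_1\leftarrow N\rightarrow M_2$ coming from $(N,\scD_1,\scD_2)$ induces a \emph{co-projective} structure on $M_1$. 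Condition (3) makes these two para-CR structures equivalent, so the single resulting path geometry on $M_1$ is simultaneously projective and co-projective ($P_1=Q_1=0$), hence flat; flatness of the para-CR structure then forces the Lewy path geometry to be torsion-free by Proposition \ref{prop:torsionfree-freestyling}, contradicting $\bT\neq 0$. Your appeal to Proposition \ref{prop:3d-path-geometries-dancing-projective} alone conflates these two statements about two \emph{different} fibrations over the same surface $M_1$, and without separating them the contradiction does not materialize. The case $W_3=0$ is symmetric.
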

\begin{proof}
Using  Proposition \ref{thm3}, it is clear that condition (3) is equivalent to   condition (d) in Theorem \ref{thm2} and  conditions (a)-(c) in Theorem \ref{thm2} are equivalent to conditions (1) and (2) in Proposition \ref{thm3}. The only difference is between condition (1) in Corollary \ref{cor:3d-path-geometries-dancing} and condition (1) in Proposition \ref{thm3}. In other words, we only need to show that  the binary quartic $\bC$ cannot have a  repeated root. Using the reduction in Proposition \ref{prop:torsion-curv-type-freestyling-reduction}, a repeated root of the quartic $\bC$ has at most multiplicity two and is either at zero or at infinity.  Assume $W_0=W_1=W_4=0,$ i.e. $\bC$ has a repeated root at zero. Using  relation \eqref{eq:W13-A1}, one obtains that $Q_1$ in \eqref{eq:T1-C1-tM-T} vanishes. Thus, by  Theorem \ref{thm:2D-path-geome} and our discussion after Proposition \ref{prop:3D-path-geom} about vanishing of $Q_1$, the double fibration $M_1\leftarrow N\rightarrow \tilde M_2,$ arising from the  2D path geometry $(N,\tilde\scD_1,\scD_2)$ in Proposition \ref{thm3}, induces a projective structure on $M_1$ (see Definition \ref{def:co-proj-surf}.)  Moreover, as in the proof of  Proposition \ref{prop:3d-path-geometries-dancing-projective}, direct computation shows if $W_0=W_1=W_4=0,$ then the double fibration $M_1\leftarrow N\to M_2$ arising from the path geometry $(N,\scD_1,\scD_2)$ induces a  co-projective structure on $M_1$.  However, in path geometry  of Lewy curves the triple of path geometries defined by $N$ are always equivalent.   This implies that the induced path geometry on $M_1$ is both projective and co-projective and, therefore, has to be locally flat. As a result, by Proposition \ref{prop:torsionfree-freestyling} the  path geometry of Lewy curves is torsion-free. Hence, if a  path geometry defined by Lewy curves has non-zero torsion, then the quartic $\bC$  cannot have a repeated root at zero. Similarly, the case  $W_0=W_3=W_4=0$ is treated which finished the proof.  
\end{proof}
As mentioned in the proof of Corollary \ref{cor:3d-path-geometries-dancing},  the last part of  condition (1), which states that the quartic cannot have any repeated root, follows from condition (3). Nevertheless, we have included it in condition (1) since as a necessary condition it can be checked easily. Also, the condition that the quadric $\bT$ has two distinct real roots follows from condition (2), but we have included it as a necessary condition for the same reason. Note that unlike conditions (1) and (2), in general it is not easy to check condition (3) in Corollary \ref{cor:3d-path-geometries-dancing} for a pair of path geometries

\begin{remark}
Let us point out that applying Cartan-K\"ahler machinery to the structure equations \eqref{eq:freestyling-streqs}, one obtains that the local generality of such path geometries depends on 3 functions of 3 variables. This agrees with a similar computation for chains of para-CR structures (see \cite[Remark 3.12]{KM-chains}). 
\end{remark}

\begin{remark}
The leaf space of the Pfaffian systems $\cI_1=\{\alpha^0,\alpha^1,\beta^1\}$ and $\cI_2=\{\alpha^0,\alpha^2,\beta^2\}$ in the 5-manifold $Q$ can be identified with any of the  leaves of the Pfaffian systems $\cJ_1=\{\alpha^2,\beta^2\}$ and $\cJ_2=\{\alpha^1,\beta^1\},$ respectively,  and the induced path geometries are equivalent. This is clear from the obtained Cartan connection in the proof of Corollary  \ref{cor:3d-path-geometries-dancing}. For instance, setting $\alpha^2=0$ and $\beta^2=0$ amounts to restricting to a leaf of $\cJ_1$ and thus the  Cartan connection of the induced 2D path geometry for this leaf is given as \eqref{eq:UHF-1-2Dpath-Cartan-Conn} by setting $\alpha^2=0$ and $\beta^2=0.$ 
\end{remark}

Now we can show that the path geometry Lewy curves in dimension three allows one to recover the para-CR structure up to a contact automorphism or  anti-automorphism. An anti-automorphism of a para-CR structure $(N,\scD_1,\scD_2)$ is a map $f\colon N\to N,$ such that $f_*(\scD_1)=\scD_2$ and $f_*(\scD_2)=\scD_1,$ i.e. it changes the sign of the almost para-complex structure on the contact distribution. The same statement has been shown for chains of CR and para-CR structures \cite{cheng,CZ-CR}. The difference between chains and Lewy curves is the fact that  the (para-)complex structure is recovered via the curvature of the path geometry of (para)-CR chains, unlike the path geometry of Lewy curves where the para-complex structure is determined via its torsion, as explained  below.  
\begin{corollary}\label{cor:determination}
The path geometry of para-CR Lewy curves in dimension three determine the underlying para-CR structure up to the sign of the almost para-complex structure.
\end{corollary}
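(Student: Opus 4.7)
My plan is to recover the para-CR distributions $(\scD_1,\scD_2)$ on $N$ directly from the Cartan-geometric data of the path geometry $(Q,\scV,\scX)$ of its Lewy curves, splitting the argument according to whether the torsion $\bT$ vanishes.

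In the torsion-free case, I would invoke Proposition \ref{prop:torsionfree-freestyling}, which says such a path geometry is locally equivalent to chains of a flat para-CR 3-manifold, combined with Theorem \ref{cor:3d-path-geometries-dancing-chains}, which says these chains coincide with Lewy curves of that flat structure. Since any two flat para-CR 3-manifolds are locally equivalent to the homogeneous model in \eqref{eq:flat-para-CR}, the underlying para-CR structure is uniquely determined up to local equivalence, and in particular up to the sign of the almost para-complex structure.

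In the non-trivial torsion case, I would proceed as follows. By Corollary \ref{cor:3d-path-geometries-dancing}(1) and Remark \ref{remark:torsion}, the torsion $\bT$, viewed as an endomorphism $\scV\to\scV$, has two distinct real eigenlines $\scV_1,\scV_2\subset\scV$, giving a canonical splitting $\scV=\scV_1\oplus\scV_2$ up to the interchange $\scV_1\leftrightarrow\scV_2$. Proposition \ref{prop:torsion-curv-type-freestyling-reduction} then produces the reduction $\cG_{red}\to Q$ adapted to this splitting, and on the underlying manifold $N=Q/\scV$ one obtains the line fields $\scD_i:=\tau_*\langle\partial/\partial\alpha^i\rangle=\tau_*[\scX,\scV_i]$ via the projection $\tau\colon Q\to N$. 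By Theorem \ref{thm2}(c) together with condition (d) and Proposition \ref{prop:freestyle}, the pair $(N,\scD_1,\scD_2)$ must coincide with the para-CR structure whose Lewy curves we started with, because in the Lewy case the three candidate para-CR structures of Proposition \ref{prop:freestyle} are forced to agree. The remaining ambiguity is the interchange $\scV_1\leftrightarrow\scV_2$, which in view of Remark \ref{rmk:sign-of-W_2-chains} and the sign normalization $W_2=\pm 1$ corresponds exactly to toggling the sign of the almost para-complex structure on $\scC$, i.e., passing to the dual para-CR structure of Definition \ref{def:duality-para-CR}.

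The step I expect to require the most care is confirming that no further ambiguity is hidden in the reduction procedure of Proposition \ref{prop:torsion-curv-type-freestyling-reduction}: concretely, one must check that every equivalence of path geometries arising from Lewy curves of two para-CR structures lifts to the reduced bundles $\cG_{red}$ and intertwines the recovered distributions $\scD_i$, possibly after the interchange noted above. This is essentially a functoriality statement for the three reduction steps, each of which is characterized by invariant tensorial conditions ($A_0=A_2=0$, $W_0=W_4=0$, $W_2=\pm 1$) on the path geometry's fundamental invariants, so any equivalence of path geometries automatically descends to an equivalence of the reduced data up to the single sign choice made when normalizing $W_2$.
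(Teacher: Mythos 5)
Your proposal is correct and follows essentially the same route as the paper: both arguments recover the splitting $\scV=\scV_1\oplus\scV_2$ from the eigenspaces of the torsion $\bT$ (handling the torsion-free case separately via flatness from Proposition \ref{prop:torsionfree-freestyling}), observe that any equivalence of the path geometries preserves $\bT$ up to scale and hence its eigenlines up to interchange, and conclude that $\scC=\scD_1\oplus\scD_2$ is recovered up to the order of the factors, i.e.\ up to the sign of the almost para-complex structure. Your extra remarks on the functoriality of the reductions make explicit what the paper compresses into the statement that $\bT$ is a relative invariant.
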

\begin{proof}
  By Theorem \ref{thm2}  Lewy curves determine the splitting of the contact distribution  $\scC=\scD_1\oplus\scD_2$, up to the order of $\scD_1$ and $\scD_2$. Indeed, if a diffeomorphism $\phi\colon N\to  N$ preserves  Lewy curves then its lift $j^1\phi\colon\PP TN\to \PP T N$ establishes equivalence of the corresponding pair $(\scX,\scV)$  on $Q$. In particular, the tangent mapping $(j^1\phi)_*$  preserves the torsion $\bT$ up to a scale, as it is a relative invariant. If the torsion is zero, then by Proposition \ref{prop:torsionfree-freestyling}, the underlying para-CR structure is flat. Otherwise, by Remark \ref{remark:torsion}, $(j^1\phi)_*$ preserves eigenspaces of $\bT$ on $Q$ up to the sign of the eigenvalues. By part (a) of Proposition \ref{thm3}, the eigenspaces are one dimensional and consequently, the splitting $\scV=\scV_1\oplus\scV_2$ is unique, up to the order of eigenspaces which recovers the splitting $\scC=\scD_1\oplus\scD_2$ up to the order of $\scD_1$ and $\scD_2.$
\end{proof}

\appendix
\setcounter{equation}{0}
\setcounter{subsection}{0}
 \setcounter{theorem}{0} 
\section*{Appendix}
\renewcommand{\theequation}{A.\arabic{equation}}
\renewcommand{\thesection}{A}
The structure equations for path geometries after carrying out reductions on the 8-dimensional principal $B$-bundle $\cG_{red}\to Q$ in Proposition \ref{prop:torsion-curv-type-freestyling-reduction} is given by 
\begin{equation}
  \label{eq:freestyling-streqs}
  \begin{aligned}
    \exd\alpha^2&=(\psi^1_1-\psi^0_0)\w\alpha^2+\alpha^0\w\beta^2 -8x_{15}\alpha^0\w\alpha^2-8x_{11}\alpha^1\w\alpha^2\\
    \exd\alpha^1&=(\psi^1_1-\psi^0_0)\w\alpha^1+\alpha^0\w\beta^1 -4x_{15}\alpha^0\w\alpha^1- 4x_{12}\alpha^1\w\alpha^2 \\
    \exd\alpha^0& = -2\psi^0_0\w\alpha^0+\alpha^1\w\alpha^2+4x_{11}\alpha^0\w\alpha^1-4x_{12}\alpha^0\w\alpha^2 \\
    \exd\beta^2&=(\psi^0_0+\psi^1_1)\w\beta^2-\alpha^2\w\mu_0+ A_1\alpha^0\w\alpha^2+x_4\alpha^1\w\alpha^2\\
    &\ \  -4x_{15}\alpha^0\w\beta^2 -4x_{11}\alpha^1\w\beta^2+4x_{12}\alpha^2\w\beta^2\\
    \exd\beta^1&= (\psi^0_0-\psi^1_1)\w\beta^1-\alpha^1\w\mu_0-A_1\alpha^0\w\alpha^1+x_{18}\alpha^1\w\alpha^2\\
    \exd\psi^0_0&=-\half \alpha^2\w\beta^1+\half\alpha^1\w\beta^2 -W_1\alpha^1\w\beta^1+W_3\alpha^2\w\beta^2\\
    &\ \ +x_4\alpha^0\w\alpha^1-x_{18}\alpha^0\w\alpha^2-\alpha^0\w\mu_0 +2x_{15}\alpha^1\w\alpha^2\\
    \exd\psi^1_1 &= \tfrac 32 \alpha^1\w\beta^2+\tfrac{3}{2}\alpha^2\w\beta^1 +3W_{1}\alpha^1\w\beta^1
    +W_3\alpha^2\w\beta^2\\
    &\ \ -6x_4\alpha^0\w\alpha^1-2x_{18}\alpha^0\w\alpha^2-6x_{15}\alpha^1\w\alpha^2\\   
    \exd\mu_0& =2\psi^0_0\w\mu_0-\beta^1\w\beta^2 + x_1\alpha^0\w\alpha^1+x_7\alpha^0\w\alpha^2+x_{16}\alpha^1\w\alpha^2 \\
  &\ \  +3x_4\alpha^0\w\beta^1+x_{14}\alpha^1\w\beta^1-5x_{15} \alpha^2\w\beta^1-3x_{18}\alpha^0\w\beta^2+5x_{15}\alpha^1\w\beta^2\\
    &\ \ -x_3\alpha^2\w\beta^2+4x_{15}\alpha^0\w\mu_0-4x_{12}\alpha^2\w\mu_0+4x_{11}\alpha^1\w\mu_0\\
  \end{aligned}
\end{equation}
for some functions $x_1,\cdots,x_{18},A_1,W_1,W_3$ on $\cG_{red}.$

\subsection*{Acknowledgments}
The starting point of this article was several discussions that Maciej Dunajski initiated with the authors about his work on the dancing construction. The authors would like to thank him for his encouragement and sharing his ideas.

WK was partially supported by the grant 2019/34/E/ST1/00188 from the National Science Centre, Poland.  OM received partial funding from the Norwegian Financial Mechanism 2014-2021 with project registration number 2019/34/H/ST1/00636. OM gratefully acknowledges partial support by the grant  PID2020-116126GB-I00 provided via the Spanish Ministerio de Ciencia e Innovaci\'on MCIN/ AEI /10.13039/50110001103 as well as partial funding from the Norwegian Financial Mechanism 2014-2021 (project registration number 2019/34/H/ST1/00636), the Tromsø Research Foundation (project “Pure Mathematics in Norway”), and the UiT Aurora project MASCOT. The  EDS calculations  are done using Jeanne Clelland's \texttt{Cartan} package in Maple.

\bibliographystyle{alpha}      
\bibliography{lewycurves}  
\end{document}